\documentclass[%
aip,
amsmath,amssymb,
reprint,%
]{revtex4-1}

\usepackage{graphicx}
\usepackage{dcolumn}
\usepackage{bm}

\usepackage{multirow}%
\usepackage{amsmath,amssymb,amsfonts}%
\usepackage{amsthm}%
\usepackage{mathrsfs}%
\usepackage[title]{appendix}%
\usepackage{xcolor}%
\usepackage{textcomp}%
\usepackage{booktabs}%
\usepackage{algorithm}%
\usepackage{algorithmicx}%
\usepackage{algpseudocode}%
\usepackage{listings}%
\usepackage{mhchem}%
\usepackage{float}
\usepackage{subfigure}
\usepackage{tikz}

\usepackage{booktabs}%
\usepackage{tabularx}
\usepackage{makecell}
\usepackage{ragged2e}
\usepackage{lineno}
\usepackage{subfigure}
\usepackage{float}
\usepackage{caption}

\usepackage[utf8]{inputenc}
\usepackage[T1]{fontenc}
\usepackage{mathptmx}
\usepackage{etoolbox}

\theoremstyle{thmstyleone}%
\newtheorem{theorem}{Theorem}[section]
\newtheorem{lemma}[theorem]{Lemma}%
\newtheorem{proposition}[theorem]{Proposition}%
\newtheorem{corollary}[theorem]{Corollary}%

\theoremstyle{thmstyletwo}%
\newtheorem{remark}{Remark}[section]%

\theoremstyle{thmstylethree}%
\newtheorem{definition}{Definition}[section]%

\makeatletter
\def\@email#1#2{%
	\endgroup
	\patchcmd{\titleblock@produce}
	{\frontmatter@RRAPformat}
	{\frontmatter@RRAPformat{\produce@RRAP{*#1\href{mailto:#2}{#2}}}\frontmatter@RRAPformat}
	{}{}
}%
\makeatother
\begin{document}
	
	
	\title{Exact Asymptotics for the Exit Time Probabilities of Scalar Ornstein-Uhlenbeck Bridges}
	\author{Feng Zhao}
	\affiliation{State Key Laboratory of Mechanics and Control for Aerospace Structures, College of Aerospace Engineering, Nanjing University of Aeronautics and Astronautics, 29 Yudao Street, Nanjing 210016, China}%
	
	\author{Yang Li}
	\affiliation{School of Automation, Nanjing University of Science and Technology, 200 Xiaolingwei Street, Nanjing 210094, China}
	
	\author{Jianlong Wang}
	\affiliation{College of Mechanical and Electrical Engineering, Wenzhou University, Chashan University Town, Wenzhou 325035, China}
	
	\author{Xianbin Liu}
	\altaffiliation[\textbf{Authors to whom correspondence should be addressed}]{}
	\email{xbliu@nuaa.edu.cn}
	\author{Dongping Jin}
	\altaffiliation[\textbf{Authors to whom correspondence should be addressed}]{}
	\email{jindp@nuaa.edu.cn}
	
	\affiliation{State Key Laboratory of Mechanics and Control for Aerospace Structures, College of Aerospace Engineering, Nanjing University of Aeronautics and Astronautics, 29 Yudao Street, Nanjing 210016, China}%
	
	
	\begin{abstract}
		This paper aims to derive accurate asymptotic estimates for the exit time probabilities of scalar Ornstein-Uhlenbeck (OU) bridges. The exit time probabilities are expressed as an asymptotic series in powers of a small parameter that characterizes the intensity of the noise inputs. It is shown that the series is valid in certain regions where all its terms are smooth functions. The results enable an accurate evaluation of the probability for a corresponding OU process to escape from a domain before a specified time, provided its initial and terminal states are known.
	\end{abstract}
	
	\maketitle
	
	\begin{quotation}
		An Ornstein-Uhlenbeck (OU) bridge is a continuous-time Gauss-Markov process whose probability distribution is the conditional probability distribution of the corresponding OU process subject to specified initial and terminal conditions. Analyzing the properties of such an OU bridge is conducive to a comprehensive characterization of numerous detailed dynamical features of the corresponding OU process. The present paper derives accurate asymptotic expansions for the exit time probabilities of scalar OU bridges. The results obtained provide a precise estimate for the probability that the corresponding OU process escapes from a domain before a specified time, on the condition that its initial and terminal states are given.
	\end{quotation}
	
	\section{Introduction}\label{Sec01} 
	Fix $T>0$ and $x_T\in\mathbb{R}$. Let $D=(d_1,d_2)\subset\mathbb{R}$ (with $d_1<d_2$) be a non-empty open interval. Consider a scalar Ornstein-Uhlenbeck
	bridge \cite{Barczy_2013,Azze_2024,Yue_2024} $\{x^{\varepsilon}_t\}_{t\in[s,T]}$ terminating at $x^{\varepsilon}_T=x_T$, which is governed by the stochastic differential equation (SDE):
	\begin{equation}\label{eq010101}
		\begin{aligned}
			\mathrm{d}x^{\varepsilon}_t=&b(x^{\varepsilon}_t,t)\mathrm{d}t+\sqrt{\varepsilon}\mathrm{d}w_t,\quad{0}\leq{s}<t<{T},\\
			x^{\varepsilon}_s=&x\in{D},
		\end{aligned}
	\end{equation}
	with
	\begin{equation}\label{eq010102}
		b(x,t)=a_1\left[\frac{\left(x_T+\frac{a_0}{a_1}\right)-\left(x+\frac{a_0}{a_1}\right)\cosh(a_1(T-t))}{\sinh(a_1(T-t))}\right],
	\end{equation}
	where $w_t$ is a one-dimensional standard Wiener process, $a_0\in\mathbb{R}$, $a_1\neq{0}$, and $\varepsilon>0$ is a small parameter. It can be shown that $x^{\varepsilon}$ is a Gauss-Markov process \cite{Nardo_2001,Donofrio_2019,Azze_2025} with mean function
	\begin{equation}\label{eq010103}
		\begin{aligned}
			\mathbb{E}^{\varepsilon}_{x,s}(x^{\varepsilon}_t)=&\left(x+\frac{a_0}{a_1}\right)\frac{\sinh(a_1(T-t))}{\sinh(a_1(T-s))}+\left(x_T+\frac{a_0}{a_1}\right)\\
			&\times\frac{\sinh(a_1(t-s))}{\sinh(a_1(T-s))}-\frac{a_0}{a_1},
		\end{aligned}
	\end{equation}
	and covariance function 
	\begin{equation}\label{eq010104}
		\mathbb{C}\text{ov}(x^{\varepsilon}_{v},x^{\varepsilon}_{t})=\frac{\varepsilon}{a_1}\frac{\sinh(a_1(v-s))\sinh(a_1(T-t))}{\sinh(a_1(T-s))},
	\end{equation}
	for $0\leq{s}\leq{v}\leq{t}<{T}$.
	
	Let $\tau^{\varepsilon}_{x,s}\triangleq\inf\{t>s:x^{\varepsilon}_{t}\notin{D}\}$ denote the first exit time from $D$ of the process $x^{\varepsilon}$, and define the exit time probability function as
	\begin{equation}\label{eq010105}
		q^{\varepsilon}(x,s)\triangleq \mathbb{P}^{\varepsilon}_{x,s}\left(\tau^{\varepsilon}_{x,s}\leq{T}\right).
	\end{equation}
	This paper aims to derive the asymptotic expansion of $q^{\varepsilon}$ in powers of $\varepsilon$, which takes the form of the asymptotic series:
	\begin{equation}\label{eq010106}
		q^{\varepsilon}=\exp\left(-{u}/{\varepsilon}-w\right)\left(1+\varepsilon\psi_1\cdots\varepsilon^{m}\psi_m+o(\varepsilon^{m})\right),
	\end{equation}
	as $\varepsilon\to{0}$, valid at certain points $(x,s)\in{D}\times[0,T)$ where the functions $u$, $w$ and $\psi_m$ are smooth.
	
	The first key result regarding this problem was achieved by Freidlin and Wentzell \cite{Wentzell_1970,Freidlin_2012}. Utilizing the large deviation principle as a foundation, they proved that
	\begin{equation}\label{eq010107}
		\lim_{\varepsilon\to{0}}\varepsilon\ln{q}^{\varepsilon}=-u,
	\end{equation}
	where
	\begin{equation}\label{eq010108}
		u(x,s)\triangleq\inf_{\gamma\in{\Gamma}_{x,s}}I(\gamma)\triangleq\inf_{\gamma\in{\Gamma}_{x,s}}\frac{1}{2}\int_{s}^{T}[\dot{\gamma}(t)-b(\gamma(t),t)]^2\mathrm{d}t,
	\end{equation}
	with the set ${\Gamma}_{x,s}$ defined as
	\begin{equation*}
		\Gamma_{x,s}\triangleq\{\gamma\in\mathcal{C}([s,T],\mathbb{R}):\gamma(s)=x,\, \exists\,t\in(s,T]\, \text{s.t.}\, \gamma(t)\in\partial{D}\}.
	\end{equation*}
	Notably, this result can also be deduced via stochastic control methods \cite{Fleming_1977} and vanishing viscosity techniques \cite{Fleming_1986a}.
	
	The generalization of (\ref{eq010107}) to a full asymptotic series of the form (\ref{eq010106}) is credited to Fleming and James \cite{Fleming_1992}. They provided a PDE-based proof of the asymptotic series by showing that Equation (\ref{eq010106}) holds on compact subsets of certain strongly regular regions $N\subset{D}\times[0,T)$. Their original formulation addressed general non-degenerate diffusion processes without imposing specific restrictions on the form of the drift term $b(x,t)$. In such a general context, however, it cannot be guaranteed that all points in ${D}\times[0,T)$ are strongly regular. Indeed, the strongly regular region may even be empty (see, e.g., Remark 2.3 of [\onlinecite{Baldi_1995}] for an example). This considerably limits the practical utility of this asymptotic expansion.
	
	Notable progress has been made when the drift term $b(x,t)$ takes certain specific forms, as demonstrated in the works of Baldi and colleagues \cite{Baldi_1995,Baldi_2002,Baldi_2015}. A salient example is the Brownian bridge, which corresponds to setting $a_0=0$ and taking $a_1\to{0}$ in (\ref{eq010102}). In this particular instance, the drift term becomes $b(x,t)=\frac{x_T-x}{T-t}$, which is affine in $x$. As a result, the functions $u$, $w$ and $\psi_m$ can all be obtained explicitly. Leveraging this fact, Baldi \cite{Baldi_1995} derived the asymptotics for the exit time probabilities of Brownian bridges, and showed that the resulting series is valid for almost every $(x,t)\in{D}\times[0,T)$. These results have been successfully applied in designing accurate numerical algorithms for simulating Brownian motions, thereby enabling precise calculations of mean first exit times \cite{Baldi_1995}. 

	In this paper, we extend Baldi's result in [\onlinecite{Baldi_1995}] to the case of an Ornstein-Uhlenbeck bridge (i.e., $a_0\in\mathbb{R}$, $a_1\neq{0}$ in Eq. (2)). Our goal is to show that the asymptotic expansion of $q^{\varepsilon}$ likewise holds for almost every $(x,t)\in{D}\times[0,T)$. To this end, the ensuing analysis is organized into two distinct cases, $x_T\notin[d_1,d_2]$ and $x_T\in(d_1,d_2)$, corresponding to the situations where $u\equiv{0}$ and $u\geq{0}$, respectively. For each case, we examine the smoothness of $u$ to determine which points are strongly regular and which are not. The corresponding results are presented in Sections \ref{Sec02} and \ref{Sec03}, respectively. In the limiting case where $a_0=0$ and $a_1\to{0}$, the OU bridge reduces to a Brownian bridge. Consequently, the asymptotics for the exit time probabilities of Brownian bridges can be derived as corollaries of our main theorems, as detailed in Section \ref{Sec04}. Finally, the conclusions of this study are summarized in Section \ref{Sec05}. 
	
	\section{Case A: $x_T\notin[d_1,d_2]$}\label{Sec02}
	For simplicity, we assume that $x_T>d_2>d_1$. The case where $d_2>d_1>x_T$ can be treated similarly.
	
	Let $x^{0}_{x,s}$ denote the solution of the deterministic ordinary differential equation (ODE):
	\begin{equation}\label{eq020101}
		\begin{aligned}
			\mathrm{d}x^{0}_{x,s}(t)=&b(x^{0}_{x,s}(t),t)\mathrm{d}t,\quad{0}\leq{s}<t<{T},\\
			x^{0}_{x,s}(s)=&x\in{D}.
		\end{aligned}
	\end{equation}
	Clearly,
	\begin{equation}\label{eq020102}
		\begin{aligned}
					x^{0}_{x,s}&(t)=\mathbb{E}^{\varepsilon}_{x,s}(x^{\varepsilon}_t)=\left(x+\frac{a_0}{a_1}\right)\frac{\sinh(a_1(T-t))}{\sinh(a_1(T-s))}\\
					&+\left(x_T+\frac{a_0}{a_1}\right)\frac{\sinh(a_1(t-s))}{\sinh(a_1(T-s))}-\frac{a_0}{a_1}, \quad t\in[s,T].
		\end{aligned}
	\end{equation}
	The monotonicity of $x^{0}_{x,s}$ on $[s,T]$ is characterized by the following lemma.
	\begin{lemma}\label{theo0201}
		The deterministic trajectory $x^{0}_{x,s}$ is non-monotonic on $[s,T]$ if
		\begin{equation*}
			\begin{aligned}
				&\left(x_T+\frac{a_0}{a_1}\right)\neq{0}\quad\text{and}\quad\\
				&\frac{\left(x+\frac{a_0}{a_1}\right)}{\left(x_T+\frac{a_0}{a_1}\right)}\in\left(\frac{1}{\cosh(a_1(T-s))},\cosh(a_1(T-s))\right),
			\end{aligned}
		\end{equation*}
		and is monotonic otherwise. A detailed classification of the monotonicity is provided in Table \ref{Table01}.
	\end{lemma}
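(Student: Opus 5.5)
Set $y(t)=x^{0}_{x,s}(t)+\frac{a_0}{a_1}$, $A=x+\frac{a_0}{a_1}$, $B=x_T+\frac{a_0}{a_1}$ and $L=T-s>0$. By (\ref{eq020102}),
\begin{equation*}
y(t)=\frac{A\sinh(a_1(T-t))+B\sinh(a_1(t-s))}{\sinh(a_1L)},
\end{equation*}
so monotonicity of $x^0_{x,s}$ on $[s,T]$ is the same as monotonicity of $y$. Differentiating gives
\begin{equation*}
\dot y(t)=\frac{a_1\left[B\cosh(a_1(t-s))-A\cosh(a_1(T-t))\right]}{\sinh(a_1L)}.
\end{equation*}
The prefactor $a_1/\sinh(a_1L)$ is positive for every $a_1\neq0$. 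Hence the sign of $\dot y$ is the sign of
\begin{equation*}
g(t)\triangleq B\cosh(a_1(t-s))-A\cosh(a_1(T-t)).
\end{equation*}
The plan is to show that $g$ changes sign in $(s,T)$ exactly under the stated condition.

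First I handle the degenerate case $B=0$. Then $g(t)=-A\cosh(a_1(T-t))$ has the constant sign of $-A$, or vanishes identically when $A=0$. So $y$ is monotone, and the lemma's claim holds because its condition requires $B\neq0$.

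Now let $B\neq0$ and write $r=A/B$, so $g=B\,h$ with $h(t)=\cosh(a_1(t-s))-r\cosh(a_1(T-t))$. Note $\cosh(a_1(t-s))$ is strictly increasing on $[s,T]$ and $\cosh(a_1(T-t))$ is strictly decreasing there. I split on the sign of $r$:
\begin{itemize}
\item If $r\le0$, then $h>0$ on $[s,T]$, so $y$ is monotone.
\item If $r>0$, then $h$ is strictly increasing on $[s,T]$, so it has at most one zero. An interior zero, which gives a strict sign change and so an interior extremum of $y$, occurs iff $h(s)<0<h(T)$.
\end{itemize}
Here $h(s)=1-r\cosh(a_1L)$ and $h(T)=\cosh(a_1L)-r$. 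So $h(s)<0<h(T)$ is equivalent to
\begin{equation*}
\frac{1}{\cosh(a_1L)}<r<\cosh(a_1L),
\end{equation*}
which is exactly the stated open interval. The boundary cases $r=1/\cosh(a_1L)$ or $r=\cosh(a_1L)$ give $h(s)=0$ or $h(T)=0$. Then $h$ keeps one sign on the open interval, and $y$ is still monotone.

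The main obstacle is the bookkeeping for Table~\ref{Table01}, not the existence argument: one must track the sign of $B$ as well as of $h$ to decide, in each regime, whether $y$ increases, decreases, or first rises then falls (or conversely). Specifically:
\begin{itemize}
\item When $y$ is non-monotone, $h$ goes from negative to positive. So for $B>0$, $y$ first decreases and then increases (an interior minimum); for $B<0$ this reverses (an interior maximum).
\item In the monotone regimes, the sign of $\dot y$ is $\operatorname{sgn}(B)$ when $r\le1/\cosh(a_1L)$, and $-\operatorname{sgn}(B)$ when $r\ge\cosh(a_1L)$.
\end{itemize}
The $B=0$ case is covered by the sign of $-A$. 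I would fill in the table from these rules.
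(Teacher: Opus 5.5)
Your proposal is correct and takes essentially the same route as the paper, whose proof is the single remark that the result follows by examining the sign of $\dot{x}^{0}_{x,s}$. You supply the details the paper leaves implicit: the reduction to $h(t)=\cosh(a_1(t-s))-r\cosh(a_1(T-t))$, the strict monotonicity of $h$ when $r>0$, and the endpoint conditions $h(s)<0<h(T)$. Your sign rules also reproduce every row of Table~\ref{Table01}.
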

	\begin{table}[ht]
		\centering
		\caption{Monotonicity of $x^{0}_{x,s}$ on $[s,T]$. Here, $t_1$ is the unique point in $(s,T)$ such that $\dot{x}^{0}_{x,s}(t_1)=0$.}
		\label{Table01}
		\resizebox{\linewidth}{!}{
			\begin{tabular}{c c c}
				\toprule[1pt]
				\text{Conditions} & {{{Sign of} $\dot{x}^{0}_{x,s}$}} & \text{Monotonicity of ${x}^{0}_{x,s}$} \\
				\hline
				{$\left\{\left(x+\frac{a_0}{a_1}\right)>0,\;\left(x_T+\frac{a_0}{a_1}\right)<{0}\right\}$} &{$\dot{x}^{0}_{x,s}(t)<0,\quad\forall t\in[s,T]$} & {$\searrow$} \\
				{$\left\{\left(x+\frac{a_0}{a_1}\right)<0,\;\left(x_T+\frac{a_0}{a_1}\right)>{0}\right\}$} &{$\dot{x}^{0}_{x,s}(t)>0,\quad\forall t\in[s,T]$} & {$\nearrow$} \\
				{$\left\{\left(x+\frac{a_0}{a_1}\right)>0,\;\left(x_T+\frac{a_0}{a_1}\right)={0}\right\}$} &{$\dot{x}^{0}_{x,s}(t)<0,\quad\forall t\in[s,T]$} & {$\searrow$} \\
				{$\left\{\left(x+\frac{a_0}{a_1}\right)=0,\;\left(x_T+\frac{a_0}{a_1}\right)={0}\right\}$} &{$\dot{x}^{0}_{x,s}(t)\equiv{0},\quad\forall t\in[s,T]$} & {\text{Constant}} \\
				{$\left\{\left(x+\frac{a_0}{a_1}\right)<0,\;\left(x_T+\frac{a_0}{a_1}\right)={0}\right\}$} &{$\dot{x}^{0}_{x,s}(t)>0,\quad\forall t\in[s,T]$} & {$\nearrow$} \\
				{$\left\{\left(x+\frac{a_0}{a_1}\right)=0,\;\left(x_T+\frac{a_0}{a_1}\right)>{0}\right\}$} &{$\dot{x}^{0}_{x,s}(t)>0,\quad\forall t\in[s,T]$} & {$\nearrow$} \\
				{$\left\{\left(x+\frac{a_0}{a_1}\right)=0,\;\left(x_T+\frac{a_0}{a_1}\right)<{0}\right\}$} &{$\dot{x}^{0}_{x,s}(t)<0,\quad\forall t\in[s,T]$} & {$\searrow$} \\
				{$\left\{\begin{array}{cc}
						\left(x+\frac{a_0}{a_1}\right)>0,\;\left(x_T+\frac{a_0}{a_1}\right)>{0},\\
						\frac{\left(x+\frac{a_0}{a_1}\right)}{\left(x_T+\frac{a_0}{a_1}\right)}\in\left(0,\frac{1}{\cosh(a_1(T-s))}\right]
					\end{array}\right\}$} &{$\dot{x}^{0}_{x,s}(t)\geq0,\quad\forall t\in[s,T]$} & {$\nearrow$} \\
				
				{$\left\{\begin{array}{cc}
						\left(x+\frac{a_0}{a_1}\right)>0,\;\left(x_T+\frac{a_0}{a_1}\right)>{0},\\
						\frac{\left(x+\frac{a_0}{a_1}\right)}{\left(x_T+\frac{a_0}{a_1}\right)}\in\left(\frac{1}{\cosh(a_1(T-s))},\cosh(a_1(T-s))\right)
					\end{array}\right\}$} &{$\;\dot{x}^{0}_{x,s}(t)\begin{cases}
						\leq{0},\; &\forall t\in[s,t_1]\\
						>0,\; &\forall t\in(t_1,T]
					\end{cases}\;$} & {$\searrow\nearrow$} \\
				
				{$\left\{\begin{array}{cc}
						\left(x+\frac{a_0}{a_1}\right)>0,\;\left(x_T+\frac{a_0}{a_1}\right)>{0},\\
						\frac{\left(x+\frac{a_0}{a_1}\right)}{\left(x_T+\frac{a_0}{a_1}\right)}\in\left[\cosh(a_1(T-s)),\infty\right)
					\end{array}\right\}$} &{$\dot{x}^{0}_{x,s}(t)\leq0,\quad\forall t\in[s,T]$} &{$\searrow$} \\
				
				{$\left\{\begin{array}{cc}
						\left(x+\frac{a_0}{a_1}\right)<0,\;\left(x_T+\frac{a_0}{a_1}\right)<{0},\\
						\frac{\left(x+\frac{a_0}{a_1}\right)}{\left(x_T+\frac{a_0}{a_1}\right)}\in\left(0,\frac{1}{\cosh(a_1(T-s))}\right]
					\end{array}\right\}$} &{$\dot{x}^{0}_{x,s}(t)\leq0,\quad\forall t\in[s,T]$} & {$\searrow$} \\
				
				{$\left\{\begin{array}{cc}
						\left(x+\frac{a_0}{a_1}\right)<0,\;\left(x_T+\frac{a_0}{a_1}\right)<{0},\\
						\frac{\left(x+\frac{a_0}{a_1}\right)}{\left(x_T+\frac{a_0}{a_1}\right)}\in\left(\frac{1}{\cosh(a_1(T-s))},\cosh(a_1(T-s))\right)
					\end{array}\right\}$} &{$\;\dot{x}^{0}_{x,s}(t)\begin{cases}
						\geq{0},\; &\forall t\in[s,t_1]\\
						<0,\; &\forall t\in(t_1,T]
					\end{cases}\;$} & {$\nearrow\searrow$} \\
				
				{$\left\{\begin{array}{cc}
						\left(x+\frac{a_0}{a_1}\right)<0,\;\left(x_T+\frac{a_0}{a_1}\right)<{0},\\
						\frac{\left(x+\frac{a_0}{a_1}\right)}{\left(x_T+\frac{a_0}{a_1}\right)}\in\left[\cosh(a_1(T-s)),\infty\right)
					\end{array}\right\}$} &{$\dot{x}^{0}_{x,s}(t)\geq0,\quad\forall t\in[s,T]$} & {$\nearrow$} \\
				\bottomrule[1pt]
		\end{tabular}}
	\end{table}
	\begin{proof}
		The result follows directly by examining the sign of the derivative
		\begin{equation*}
			\begin{aligned}
				\dot{x}^{0}_{x,s}(t)=&-\left(x+\frac{a_0}{a_1}\right)\frac{a_1\cosh(a_1(T-t))}{\sinh(a_1(T-s))}\\
				&+\left(x_T+\frac{a_0}{a_1}\right)\frac{a_1\cosh(a_1(t-s))}{\sinh(a_1(T-s))},\quad t\in[s,T].
			\end{aligned}
		\end{equation*}
	\end{proof}
	\begin{remark}\label{rem020101}
		When $x_T+\frac{a_0}{a_1}\neq{0}$, two curves in $\mathbb{R}\times[0,T)$ are of particular interest:
		\begin{equation*}
			\text{Cur}_{I}\triangleq\left\{(x,s):x+\frac{a_0}{a_1}=\left({x_T+\frac{a_0}{a_1}}\right)\bigg/{\cosh(a_1(T-s))}\right\},
		\end{equation*}
		\begin{equation*}
			\text{Cur}_{II}\triangleq\left\{(x,s):x+\frac{a_0}{a_1}=\left({x_T+\frac{a_0}{a_1}}\right){\cosh(a_1(T-s))}\right\}.
		\end{equation*}
		These curves possess the following properties:
		\begin{itemize}
			\item $\text{Cur}_{I}=\{(x,s)\in\mathbb{R}\times[0,T):b(x,s)=0\}$, i.e., $\text{Cur}_{I}$ coincides with the nullcline of the vector field $(b(x,s),1)^{\top}$. For any $(x,s)\in\text{Cur}_{I}$, $x^{0}_{x,s}$ satisfies $\dot{x}^{0}_{x,s}(s)=0$ and is monotonic on $[s,T]$.
			\item For any $(x,s)\in\text{Cur}_{II}$, $x^{0}_{x,s}$ is monotonic on $[s,T]$ and satisfies $\dot{x}^{0}_{x,s}(T)=0$. Moreover, the entire integral curve (or the entire graph) $\{(x^{0}_{x,s}(t),t):t\in[s,T)\}$ lies on $\text{Cur}_{II}$, i.e., $(x^{0}_{x,s}(t),t)\in\text{Cur}_{II}$ for all $t\in[s,T)$.
			\item The curves $\text{Cur}_{I}$ and $\text{Cur}_{II}$ act as separatrices. Trajectories starting on opposite sides of these curves exhibit distinct monotonicity behaviors. Specifically, $x^{0}_{x,s}$ is non-monotonic if and only if $(x,s)$ lies in the region between these curves, which corresponds precisely to the condition stated in Lemma \ref{theo0201}, i.e.,
			\begin{equation*}
				\begin{aligned}
					\;\;(x,s)\in\Omega\triangleq\bigg\{&(x,s):\frac{1}{\cosh(a_1(T-s))}<\\
					&\frac{\left(x+{a_0}/{a_1}\right)}{\left(x_T+{a_0}/{a_1}\right)}<\cosh(a_1(T-s))\bigg\}.
				\end{aligned}
			\end{equation*}
			Outside $\Omega$, all trajectories are monotonic.
		\end{itemize}
	\end{remark}
	
	We now proceed to analyze Case A by distinguishing seven subcases based on the relative positions of $-a_0/a_1$, $d_1$, $d_2$, and $x_T$:
	\begin{itemize}
		\item Case $\text{A}_{I}$: $-a_0/a_1>x_T>d_2>d_1$;
		\item Case $\text{A}_{II}$: $-a_0/a_1=x_T>d_2>d_1$;
		\item Case $\text{A}_{III}$: $x_T>-a_0/a_1>d_2>d_1$;
		\item Case $\text{A}_{IV}$: $x_T>-a_0/a_1=d_2>d_1$;
		\item Case $\text{A}_{V}$: $x_T>d_2>-a_0/a_1>d_1$;
		\item Case $\text{A}_{VI}$: $x_T>d_2>-a_0/a_1=d_1$;
		\item Case $\text{A}_{VII}$: $x_T>d_2>d_1>-a_0/a_1$;
	\end{itemize}
	Depending on (i) whether the intersection ${D}\times[0,T)\cap\Omega$ is empty, and (ii) whether there exist points $(x,s)\in{D}\times[0,T)$ such that $\{(x^{0}_{x,s}(t),t):t\in[s,T]\}$ intersects $\partial{D}\times[0,T)$ at more than one point, we further refine this classification as follows:
	\begin{itemize}
		\item Cases $\text{A}_{I}$, $\text{A}_{V}$ and $\text{A}_{VI}$ are each subdivided into two subcases, denoted by $\text{A}_{i,I}$ and $\text{A}_{i,II}$ for $i\in\{I,V,VI\}$.
		\item Cases $\text{A}_{II}$, $\text{A}_{III}$ and $\text{A}_{IV}$ are not subdivided.
		\item Case $\text{A}_{VII}$ is subdivided into three subcases: $\text{A}_{VII,I}$, $\text{A}_{VII,II}$ and $\text{A}_{VII,III}$.
	\end{itemize}
	For each resulting (sub)case, the monotonicity of $x^{0}_{x,s}$ on $[s,T]$ for $(x,s)\in{D}\times[0,T)$ follows directly from Lemma \ref{theo0201}. The detailed characterization is presented below.
	\begin{corollary}\label{theo0202}
		As summarized in Table \ref{Table02}, the following monotonicity properties hold:
		
		(a) In Cases $\text{A}_{I,I}$, $\text{A}_{II}$, $\text{A}_{III}$, $\text{A}_{IV}$, $\text{A}_{V,I}$, $\text{A}_{VI,I}$ and $\text{A}_{VII,I}$, the intersection ${D}\times[0,T)\cap\Omega$ is empty. Hence, $x^{0}_{x,s}$ is monotone increasing on $[s,T]$ for all $(x,s)\in{D}\times[0,T)$.
		
		(b) In Cases $\text{A}_{I,II}$, $\text{A}_{V,II}$, $\text{A}_{VI,II}$, $\text{A}_{VII,II}$ and $\text{A}_{VII,III}$, the intersection is non-empty. Consequently, $x^{0}_{x,s}$ is monotone increasing on $[s,T]$ for any $(x,s)\in{D}\times[0,T)\setminus\Omega$, and is non-monotonic for any $(x,s)\in{D}\times[0,T)\cap\Omega$.
	\end{corollary}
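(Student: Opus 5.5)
Write $c\triangleq -a_0/a_1$, $y\triangleq x-c$, $y_T\triangleq x_T-c$ and $\kappa(s)\triangleq\cosh(a_1(T-s))$. We use $\kappa(s)\geq 1$, with equality only at $s=T$, and the fact that $\kappa$ decreases strictly on $[0,T)$ from $\kappa(0)=\cosh(a_1T)$ towards $1$. The plan is to read off the relevant row of Table \ref{Table01} (Lemma \ref{theo0201}) for every $(x,s)\in D\times[0,T)$ in each subcase. We also use the standing assumption of Case A, $x_T>d_2$, so every $x\in D$ satisfies $x<x_T$.

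The degenerate and opposite-sign configurations come first. In Case $\text{A}_{II}$ we have $y_T=0$, and every $x\in D$ satisfies $x<x_T=c$, so $y<0$. The row "$y<0$, $y_T=0$" gives $\dot{x}^0_{x,s}>0$, hence increasing, and $\Omega$ is not defined (empty) because it requires $y_T\neq0$. In Cases $\text{A}_{III}$ and $\text{A}_{IV}$ we have $y_T>0$ and $y\le d_2-c\le 0$ on $D$. If $y<0$, the row "$y<0$, $y_T>0$" gives increasing. If $y=0$ (only $x=c$, which lies outside the open interval $D$ in $\text{A}_{IV}$ and cannot occur in $\text{A}_{III}$), the corresponding row also gives increasing. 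Since $y/y_T\le0<1/\kappa$, the set $\Omega$ does not meet $D\times[0,T)$. Case $\text{A}_{I}$ has $y_T<0$ and $y<y_T<0$ on $D$, so $y/y_T>1$. This ratio falls either in $(1/\kappa,\kappa)$ (non-monotone) or in $[\kappa,\infty)$. The corresponding row of Table \ref{Table01} is "$\nearrow$", which is the monotone increasing statement.

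For Cases $\text{A}_{V}$–$\text{A}_{VII}$ we have $y_T>0$. Points with $y\le0$ are increasing and not in $\Omega$, by the same rows as above. Points with $y>0$ have ratio $r=y/y_T\in(0,1)$, since $x<x_T$. So $r<\kappa$ always, and the trajectory is increasing exactly when $r\le 1/\kappa(s)$, i.e.\ when $(x,s)\notin\Omega$; otherwise it is non-monotone ($\searrow\nearrow$). The same dichotomy, with $r>1$, holds in Case $\text{A}_I$. This proves the conditional part of (b).

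It remains to characterize the subcases by emptiness of $D\times[0,T)\cap\Omega$. In $\text{A}_I$, the set is empty iff $\sup_{x\in D} y/y_T\le\inf_s\kappa(s)$. Since $\kappa$ is decreasing, the binding times are $s$ near $T$ for $\kappa$ near $1$ and $s=0$ for the value $\cosh(a_1T)$; this gives an explicit threshold in terms of $(d_1-c)/(x_T-c)$ and $\cosh(a_1T)$. In the $y_T>0$ cases, the set is empty iff $\sup_{x\in D}(x-c)/y_T=(d_2-c)/y_T\le \inf_{s\in[0,T)}1/\kappa(s)=1/\cosh(a_1T)$. This threshold defines $\text{A}_{V,I}$, $\text{A}_{VI,I}$ and $\text{A}_{VII,I}$ versus the others. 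The further split of $\text{A}_{VII}$ into II and III, which concerns multiple boundary hits of the graph, does not affect monotonicity, so it is handled identically under (b). The main obstacle is aligning these explicit threshold inequalities with the paper's definitions of the subcases, as recorded in Table \ref{Table02}, and checking the boundary equality cases $r=1/\kappa$, which Table \ref{Table01} assigns to the increasing row.
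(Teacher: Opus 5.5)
Your route is the paper's own: the paper justifies this corollary only by reading off Table \ref{Table01} subcase by subcase. Your pointwise dichotomy is correct. For $y_T>0$ and $x\in D$ you have $r=y/y_T<1<\kappa(s)$, so $x^0_{x,s}$ is increasing iff $r\le 1/\kappa(s)$ iff $(x,s)\notin\Omega$. In $\text{A}_I$ you have $r>1>1/\kappa(s)$, so it is increasing iff $r\ge\kappa(s)$ iff $(x,s)\notin\Omega$. Your $y_T>0$ emptiness criterion $(d_2-c)/y_T\le 1/\cosh(a_1T)$ is also right. Multiplied by $y_T>0$, it is literally the defining inequality of $\text{A}_{V,I}$, $\text{A}_{VI,I}$ and $\text{A}_{VII,I}$, so that part of your ``main obstacle'' is already resolved.

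The genuine error is the emptiness criterion in Case $\text{A}_I$. Since there $(x,s)\in\Omega$ iff $r(x)<\kappa(s)$, the set $D\times[0,T)\cap\Omega$ is empty iff $r(x)\ge\kappa(s)$ for all $x\in D$ and $s\in[0,T)$. That is, iff $\inf_{x\in D}r\ge\max_{s}\kappa(s)$, not $\sup_{x\in D}r\le\inf_s\kappa(s)$ as you wrote.

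Your criterion can never hold: since $\inf_s\kappa(s)=1$, it would require $r\le1$ on $D$, whereas $r>1$ there. So, as written, it would make $\text{A}_{I,I}$ impossible. You have also picked the wrong endpoint and the wrong times. Because $r(x)=(x-c)/(x_T-c)$ is decreasing in $x$ when $x_T<c$, the infimum is approached as $x\to d_2$, not at $d_1$. The maximum of $\kappa$ is $\kappa(0)=\cosh(a_1T)$, and times near $T$ play no role.

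The correct condition is $(d_2-c)/(x_T-c)\ge\cosh(a_1T)$. Multiplying by $x_T-c<0$ turns it into $d_2+a_0/a_1\le(x_T+a_0/a_1)\cosh(a_1T)$, which is exactly the definition of $\text{A}_{I,I}$. When it fails, any $x\in D$ sufficiently close to $d_2$ gives $(x,0)\in\Omega$, which is the non-emptiness needed for $\text{A}_{I,II}$.

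The same witness $(x,0)$ with $x$ near $d_2$ handles $\text{A}_{V,II}$, $\text{A}_{VI,II}$, $\text{A}_{VII,II}$ and $\text{A}_{VII,III}$. With this correction your argument proves the corollary.
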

	\begin{table}[ht]
		\centering
		\caption{Monotonicity of $\{x^{0}_{x,s}(t):t\in[s,T]\}$ for $(x,s)\in{D}\times[0,T)$ in Case A.}
		\label{Table02}
		\resizebox{\linewidth}{!}{
			\begin{tabular}{c c c c}
				\toprule[1pt]
				\text{Cases} & \text{Subcases} & \text{Monotonicity of $x^{0}_{x,s}$} \\
				\hline
				\multirow{3}{*}{$\text{A}_{I}$} & {$\text{A}_{I,I}\;:\;\left\{d_2+\frac{a_0}{a_1}\leq\left(x_T+\frac{a_0}{a_1}\right)\cosh(a_1T)\right\}$}  & {$\;\nearrow,\qquad\forall (x,s)\in{D}\times[0,T)\;\;\;\;\;\;$} \\
				& {$\text{A}_{I,II}:\;\left\{d_2+\frac{a_0}{a_1}>\left(x_T+\frac{a_0}{a_1}\right)\cosh(a_1T)\right\}$} &
				{$\;\begin{cases}
						\nearrow\searrow, &\forall (x,s)\in{D}\times[0,T)\cap\Omega\\
						\nearrow, &\forall (x,s)\in{D}\times[0,T)\setminus\Omega
					\end{cases}\;$} \\
				\hline
				{$\text{A}_{II},\text{A}_{III},\text{A}_{IV}$} & ------ & {$\;\nearrow,\qquad\forall (x,s)\in{D}\times[0,T)\;\;\;\;\;\;$} \\
				\hline
				\multirow{3}{*}{$\text{A}_{V},\text{A}_{VI}$} & {$\text{A}_{V,I},\;\text{A}_{VI,I}\;:\;\,\left\{d_2+\frac{a_0}{a_1}\leq\left(x_T+\frac{a_0}{a_1}\right)/\cosh(a_1T)\right\}$}  & {$\;\nearrow,\qquad\forall (x,s)\in{D}\times[0,T)\;\;\;\;\;\;$}  \\
				& {$\text{A}_{V,II},\text{A}_{VI,II}:\;\left\{d_2+\frac{a_0}{a_1}>\left(x_T+\frac{a_0}{a_1}\right)/\cosh(a_1T)\right\}$} &
				{$\;\begin{cases}
						\searrow\nearrow, &\forall (x,s)\in{D}\times[0,T)\cap\Omega\\
						\nearrow, &\forall (x,s)\in{D}\times[0,T)\setminus\Omega
					\end{cases}\;$}  \\
				\hline
				\multirow{7}{*}{$\text{A}_{VII}$} & {$\text{A}_{VII,I}\;\;:\;\left\{d_2+\frac{a_0}{a_1}\leq\left(x_T+\frac{a_0}{a_1}\right)/\cosh(a_1T)\right\}\;\;\;\;\;\;\;\,$}  & {$\;\nearrow,\qquad\forall (x,s)\in{D}\times[0,T)\;\;\;\;\;\;$}  \\
				& {$\text{A}_{VII,II}:\;\left\{\begin{array}{cc}
						d_2+\frac{a_0}{a_1}>\left(x_T+\frac{a_0}{a_1}\right)/\cosh(a_1T)\\
						d_1+\frac{a_0}{a_1}\leq\left(x_T+\frac{a_0}{a_1}\right)/\cosh(a_1T)
					\end{array}\right\}\;\;$} &
				{$\;\begin{cases}
						\searrow\nearrow, &\forall (x,s)\in{D}\times[0,T)\cap\Omega\\
						\nearrow, &\forall (x,s)\in{D}\times[0,T)\setminus\Omega
					\end{cases}\;$} \\
				& {$\text{A}_{VII,III}:\;\left\{d_1+\frac{a_0}{a_1}>\left(x_T+\frac{a_0}{a_1}\right)/\cosh(a_1T)\right\}\;\;\;\;\;\;\;\;$}
				&
				{$\;\begin{cases}
						\searrow\nearrow, &\forall (x,s)\in{D}\times[0,T)\cap\Omega\\
						\nearrow, &\forall (x,s)\in{D}\times[0,T)\setminus\Omega
					\end{cases}\;$} \\
				\bottomrule[1pt]
		\end{tabular}}
	\end{table}
	
	Let $\tau^{0}_{x,s}\triangleq\inf\{t>s:x^{0}_{x,s}(t)\notin{D}\}$ be the first exit time of $x^{0}_{x,s}$ from $D$. The following result can be deduced directly from Corollary \ref{theo0202}.
	\begin{corollary}\label{theo0203}
		(a) In Cases $\text{A}_{I}$, $\text{A}_{II}$, $\text{A}_{III}$, $\text{A}_{IV}$, $\text{A}_{V}$, $\text{A}_{VI}$, $\text{A}_{VII,I}$, and $\text{A}_{VII,II}$, for any $(x,s)\in{D}\times[0,T)$, $\{(x^{0}_{x,s}(t),t):t\in[s,T]\}$ intersects $\partial{D}\times[0,T)$ transversely at a unique point $(d_2,\tau^{0}_{x,s})$. The first exit time $\tau^{0}_{x,s}$ is the unique time in $(s,T)$ satisfying $x^{0}_{x,s}(\tau^{0}_{x,s})=d_2$.
		
		(b) In contrast, Case $\text{A}_{VII,III}$ exhibits different behavior. Let $t_2$ be the unique time in $(0,T)$ satisfying
		\begin{equation*}
			d_1+\frac{a_0}{a_1}=\left({x_T+\frac{a_0}{a_1}}\right)\bigg/{\cosh(a_1(T-t_2))},
		\end{equation*}
		and define the critical trajectory $x^{0,*}$ as
		\begin{equation*}
			\begin{aligned}
				x^{0,*}(t)\triangleq& x^{0}_{d_1,t_2}(t)\\
				=&\left({d_1+\frac{a_0}{a_1}}\right)\cosh(a_1(t_2-t))-\frac{a_0}{a_1},\quad t\in[0,T].
			\end{aligned}
		\end{equation*}
		The domain $D\times[0,T)$ is partitioned into three distinct subdomains:
		\begin{equation*}
			\Sigma_1\triangleq\{(x,s)\in{D}\times[0,T)\cap\Omega:x<x^{0,*}(s),\;s\in[0,T)\},
		\end{equation*}
		\begin{equation*}
			\Sigma_2\triangleq\{(x,s)\in{D}\times[0,T)\cap\Omega:x=x^{0,*}(s),\;s\in[0,T)\},
		\end{equation*}
		\begin{equation*}
			\Sigma_3\triangleq{D}\times[0,T)\setminus(\Sigma_1\cup\Sigma_2).
		\end{equation*}
		Then 
		\begin{itemize}
			\item For any $(x,s)\in\Sigma_1$, $\{(x^{0}_{x,s}(t),t):t\in[s,T]\}$ exits $D\times[0,T)$ transversely through $\{d_1\}\times[0,T)$ at time $\tau^{0}_{x,s}$, re-enters $D\times[0,T)$ at a later time, and finally exits again transversely through $\{d_2\}\times[0,T)$ at another time. The first exit time $\tau^{0}_{x,s}$ is the smallest solution of $x^{0}_{x,s}(\tau^{0}_{x,s})=d_1$ in $(s,T)$.
			\item For any $(x,s)\in\Sigma_2$, $\{(x^{0}_{x,s}(t),t):t\in[s,T]\}$ exits $D\times[0,T)$ tangentially through $\{d_1\}\times[0,T)$ at time $\tau^{0}_{x,s}$ and, after re-entering, exits through $\{d_2\}\times[0,T)$ transversely at a later time. The first exit time $\tau^{0}_{x,s}$ is the unique solution of $x^{0}_{x,s}(\tau^{0}_{x,s})=d_1$ in $(s,T)$, corresponding to the tangential exit point. That is, $\tau^{0}_{x,s}=t_2$.
			\item For any $(x,s)\in\Sigma_3$, $\{(x^{0}_{x,s}(t),t):t\in[s,T]\}$ intersects $\partial{D}\times[0,T)$ transversely at a unique point $(d_2,\tau^{0}_{x,s})$, where $\tau^{0}_{x,s}$ is the unique solution of $x^{0}_{x,s}(\tau^{0}_{x,s})=d_2$ in $(s,T)$.
			\item Moreover, as illustrated in FIG. \ref{fig01d}, the first exit time $\tau^{0}_{x,s}$ is a smooth function of $(x,s)$ within $\Sigma_1\cup\Sigma_3$, but exhibits a discontinuity when $(x,s)$ crosses $\Sigma_2$ transversely.
		\end{itemize}
	\end{corollary}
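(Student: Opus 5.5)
The plan is to reduce everything to the one-dimensional geometry of the explicit trajectory. Write $c\triangleq -a_0/a_1$, $y(t)\triangleq x^{0}_{x,s}(t)-c$, $y_0=x-c$ and $y_T=x_T-c$. By (\ref{eq020102}), $y$ is a linear combination of $\sinh(a_1(T-t))$ and $\sinh(a_1(t-s))$, so it solves $\ddot y=a_1^2 y$. Hence $y$ is convex where $y>0$ and concave where $y<0$. It also has at most one critical point, which is the point $t_1$ of Lemma \ref{theo0201}.

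For part (a), in every listed case we have $x_T>d_2$, so the trajectory starts in $D$ and ends at $x_T$ above $d_2$. By continuity it must hit $\partial D$, and it hits $\{d_2\}$ at least once. Corollary \ref{theo0202} settles the subcases where $x^0_{x,s}$ is increasing: then $\dot x^0_{x,s}>0$ except possibly at an endpoint. The crossing of level $d_2$ is therefore unique and transversal, since $\dot x^0_{x,s}=0$ can occur only at $t=s$ or $t=T$, where $x^0_{x,s}\neq d_2$. The lower level $d_1$ is never reached, because the path never drops below $x$.

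The non-monotone subcases need more care.
\begin{itemize}
\item In $\text{A}_{I,II}$ the shape is $\nearrow\searrow$, so $y<0$ and $y$ is concave. It rises to a maximum and then decreases to $y_T$, which is still above $d_2-c$. The minimum over $[s,T]$ is therefore $\min(x,x_T)$, which lies in $(d_1,\infty)$, so $d_1$ is never reached. On the increasing branch $d_2$ is crossed once; on the decreasing branch the path stays above $x_T>d_2$.
\item In $\text{A}_{V},\text{A}_{VI},\text{A}_{VII,II}$ the shape is $\searrow\nearrow$ with $y>0$. The minimum value is $y(t_1)$, and I would compute it from the ODE invariant $\dot y^2-a_1^2y^2=\text{const}$, or directly, as $\min y = y_T/\cosh(a_1(T-t_1))$. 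Since $t_1<T$, this is strictly greater than $y_T/\cosh(a_1T)$. The case hypothesis $d_1-c\le y_T/\cosh(a_1T)$, or $d_1\le c$ in $\text{A}_{V},\text{A}_{VI}$, then shows that the minimum stays strictly above $d_1$. After $t_1$ the path increases strictly, so $d_2$ is crossed exactly once, transversally.
\end{itemize}

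For part (b), in $\text{A}_{VII,III}$ the points of $\mathrm{Cur}_{II}$ lie above the level $d_1$. I would parametrize the trajectories in $\Omega$ by their minimum time $t_1$ and minimum value $m=y_T/\cosh(a_1(T-t_1))$. The trajectory touches level $d_1$ tangentially exactly when $m=d_1-c$, that is, when $t_1=t_2$. That trajectory is $y(t)=(d_1-c)\cosh(a_1(t_2-t))$, which is $x^{0,*}$.

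The next step is to show that trajectories starting at $(x,s)$ with $x<x^{0,*}(s)$ have minimum below $d_1$. Trajectories do not cross, by uniqueness for the ODE, and all of them end at $x_T$. Comparing them at time $s$ therefore orders them on $[s,T)$, so $x^0_{x,s}<x^{0,*}$ there, and the minimum lies below $d_1$. Convexity then gives exactly two crossings of $d_1$, both transversal. The trajectory rises afterwards and crosses $d_2$ once, transversally. Points above $x^{0,*}$, or outside $\Omega$, stay above $d_1$ and fall under the argument of part (a).

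Smoothness of $\tau^0_{x,s}$ on $\Sigma_1\cup\Sigma_3$ follows from the implicit function theorem applied to $x^0_{x,s}(\tau)=d_i$, using $\dot x^0\ne0$ at the crossing. The discontinuity across $\Sigma_2$ comes from comparing one-sided limits. From $\Sigma_1$ the exit time tends to $t_2$; from $\Sigma_3$ it tends to the $d_2$-crossing time, which is strictly larger.

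I expect the main obstacle to be the $\searrow\nearrow$ analysis, specifically the comparison of the minimum value with $d_1$. This requires the explicit minimum formula and the monotonicity of $t_1$ along the family of trajectories. The non-crossing and ordering argument is what I would use to avoid computing $t_1(x,s)$ explicitly.
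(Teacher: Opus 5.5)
Your argument is correct, and it is more explicit than the paper's proof. The paper only appeals to the direction of the field $(b(x,t),1)^{\top}$, the monotonicity table of Corollary~\ref{theo0202}, and FIG.~\ref{fig01}. Monotonicity alone cannot decide whether a $\searrow\nearrow$ trajectory dips below $d_1$, and that is exactly what separates $\text{A}_{VII,II}$ from $\text{A}_{VII,III}$.

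Your route supplies that missing quantitative content. The representation $y(t)=m\cosh(a_1(t-t_1))$ with $m=y_T/\cosh(a_1(T-t_1))$, together with convexity from $\ddot y=a_1^2y$, turns the subcase inequalities into an exact criterion, namely $m\gtrless d_1-c$. The non-crossing comparison with $x^{0,*}$ then sorts points into $\Sigma_1,\Sigma_2,\Sigma_3$ without solving for $t_1(x,s)$. The paper's picture buys brevity. Your version actually verifies the claims, including transversality and the jump of $\tau^{0}_{x,s}$ from $t_2$ to the $d_2$-crossing time of $x^{0,*}$.

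A few details need tightening.
\begin{itemize}
\item The inequality $y_T/\cosh(a_1(T-t_1))>y_T/\cosh(a_1T)$ holds because $t_1>s\ge 0$, so that $0<T-t_1<T$. The fact $t_1<T$, which you cite, is not what is needed.
\item In the $\Sigma_1$ step, the ordering $x^{0}_{x,s}<x^{0,*}$ on $[s,T)$ gives a value below $d_1$ only if $t_2\ge s$. So you should record that $\Sigma_1\subset\{s<t_2\}$. For $s\ge t_2$, the identity $\cosh(a_1(T-t_2))=\cosh(a_1(T-s))\cosh(a_1(s-t_2))+\sinh(a_1(T-s))\sinh(a_1(s-t_2))$ shows that $(x^{0,*}(s),s)$ lies on or below $\mathrm{Cur}_{I}$. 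Hence every point beneath it lies outside $\Omega$.
\item Alternatively for the previous point, note that $y_T\cosh(a_1(t-t_1))/\cosh(a_1(T-t_1))$ is increasing in $t_1$ at each fixed $t<T$. Lower trajectories therefore have smaller $t_1$ and smaller minimum $m$, and this gives $m<d_1-c$ directly.
\item For points of $\Sigma_3\cap\Omega$, the part-(a) lower bound on the minimum is unavailable, because the $\text{A}_{VII,III}$ hypothesis reverses it. Staying above $d_1$ must come from $x^{0}_{x,s}>x^{0,*}\ge d_1$. Only the $d_2$-crossing argument carries over from part (a).
\item For smoothness, the implicit function theorem only gives a smooth root near $\tau^{0}_{x,s}$. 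Add that the trajectory stays a positive distance from $\partial D$ on $[s,\tau^{0}_{x,s}-\delta]$. Then nearby trajectories have no earlier boundary contact, and the local root really is the first exit time.
\end{itemize}
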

	\begin{proof}
		The claims follow directly by examining the direction of the vector field $(b(x,t),1)^{\top}$ in $D\times[0,T)$ together with the monotonicity properties of $x^{0}_{x,s}$ established in Corollary \ref{theo0202}. For clarity, a graphical illustration for Case $\text{A}_{VII}$ is provided in FIG. \ref{fig01}. The results for the remaining cases can be established using analogous reasoning.
	\end{proof}
	\begin{figure}[h]
		\centering
		\subfigure{\begin{minipage}[b]{.3\linewidth}
				\centering
				\includegraphics[scale=0.3]{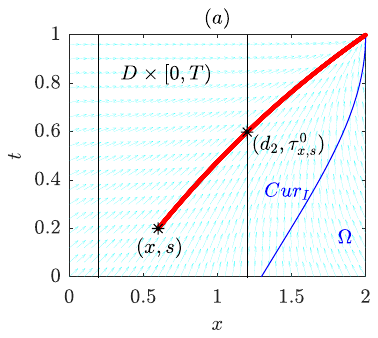}
			\end{minipage}
			\label{fig01a}
		\quad\quad\quad\quad\quad}
		\subfigure{\begin{minipage}[b]{.3\linewidth}
				\centering
				\includegraphics[scale=0.3]{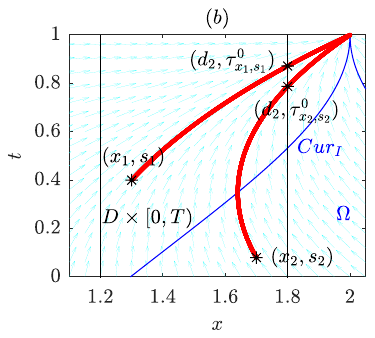}
			\end{minipage}
			\label{fig01b}
		\quad\quad\quad\quad\quad}
		
		\subfigure{\begin{minipage}[b]{.3\linewidth}
				\centering
				\includegraphics[scale=0.3]{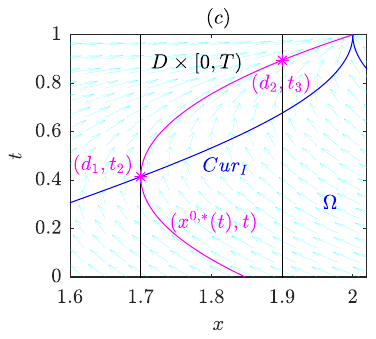}
			\end{minipage}
			\label{fig01c}
		\quad\quad\quad\quad\quad}
		\subfigure{\begin{minipage}[b]{.3\linewidth}
				\centering
				\includegraphics[scale=0.3]{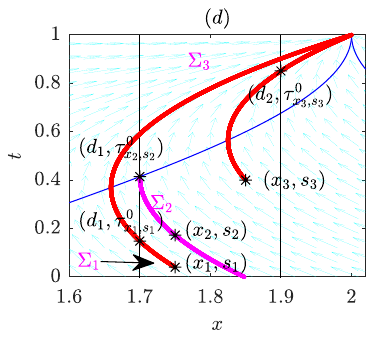}
			\end{minipage}
			\label{fig01d}
		\quad\quad\quad\quad\quad}
		\caption{The vector field $(b(x,t),1)^{\top}$ and the intersection between $\{(x^{0}_{x,s}(t),t):t\in[s,T]\}$ and $\partial{D}\times[0,T)$ for Case $\text{A}_{VII}$. Subfigures (a) and (b) correspond to Cases $\text{A}_{VII,I}$ and $\text{A}_{VII,II}$, respectively, while (c) and (d) illustrate an example for Case $\text{A}_{VII,III}$. The parameters are $a_0=0$, $a_1=1$, $T=1$, $x_T=2$ with (a) $d_1=0.2$, $d_2=1.2$; (b) $d_1=1.2$, $d_2=1.8$; (c)(d) $d_1=1.7$, $d_2=1.9$.}
		\label{fig01}
	\end{figure}
	\begin{remark}\label{rem020102}
		It is important to note that $\tau^{0}_{x,s}$ is invariant along $(x^{0}_{x,s}(t),t)$. More precisely, 
		\begin{equation*}
			\tau^{0}_{x^{0}_{x,s}(t),t}=\tau^{0}_{x,s},\quad\text{for all}\;t\in[s,\tau^{0}_{x,s}].
		\end{equation*}
		It is also clear that $\tau^{0}_{x,s}\in(s,T)$ for every $(x,s)\in{D}\times[0,T)$.
	\end{remark}
	
	These results allow us to calculate the function $u$ explicitly, as described below.
	\begin{corollary}\label{theo0204}
		$u(x,s)\equiv{0}$ for all $(x,s)\in{D}\times[0,T)$ in every case.
	\end{corollary}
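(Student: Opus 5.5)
The plan is to show that the infimum in (\ref{eq010108}) is attained, with value zero, by the deterministic trajectory itself. Since $I(\gamma)\geq 0$ for every $\gamma$, we have $u(x,s)\geq 0$ automatically. It therefore suffices to exhibit, for each $(x,s)\in D\times[0,T)$, a path $\gamma\in\Gamma_{x,s}$ with $I(\gamma)=0$.

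The natural candidate is $\gamma=x^{0}_{x,s}$ restricted to $[s,T]$. By (\ref{eq020101}) it satisfies $\dot\gamma(t)=b(\gamma(t),t)$ for all $t\in[s,T)$, so the integrand of $I$ vanishes identically on $[s,T)$. Because $\{T\}$ has measure zero, $I(\gamma)=0$. The explicit formula (\ref{eq020102}) shows that $\gamma$ is continuous on $[s,T]$ with $\gamma(s)=x$ and $\gamma(T)=x_T$. Hence $\gamma\in\mathcal{C}([s,T],\mathbb{R})$, even though $b$ is singular at $t=T$.

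Next we check that $\gamma\in\Gamma_{x,s}$, i.e.\ that $\gamma$ reaches $\partial D$ at some $t\in(s,T]$. Corollary \ref{theo0203} gives this directly. In cases (a) and in $\Sigma_3$ the trajectory hits $d_2$ at $\tau^{0}_{x,s}\in(s,T)$. In $\Sigma_1\cup\Sigma_2$ it hits $d_1$ at $\tau^{0}_{x,s}\in(s,T)$, in accordance with Remark \ref{rem020102}. Independently of that case analysis, there is also an intermediate value argument: $\gamma(s)=x<d_2<x_T=\gamma(T)$ under the standing assumption $x_T>d_2$, so continuity gives some $t\in(s,T)$ with $\gamma(t)=d_2\in\partial D$. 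This second argument makes the result uniform over all subcases $\text{A}_{I}$--$\text{A}_{VII}$. The symmetric case $x_T<d_1$ is handled the same way with $d_1$ in place of $d_2$.

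Combining these steps gives $0\leq u(x,s)\leq I(x^{0}_{x,s})=0$, which proves the corollary. The only point needing a little care, and the one I would verify explicitly, is the integrability of the action up to the singular endpoint $T$. This is not a real obstacle here: the integrand is identically zero on $[s,T)$, and the endpoint behaviour $\gamma(t)\to x_T$ follows from (\ref{eq020102}).
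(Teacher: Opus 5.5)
Your proof is correct and is essentially the paper's argument: both use the deterministic trajectory $x^{0}_{x,s}$ as the test path, whose integrand vanishes identically, so $0\le u(x,s)\le I(x^{0}_{x,s})=0$. The only difference is how you show $x^{0}_{x,s}\in\Gamma_{x,s}$: the paper relies on the case-by-case Corollary \ref{theo0203}, while your intermediate value argument ($x<d_2<x_T=x^{0}_{x,s}(T)$) is more self-contained and covers all subcases at once.
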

	\begin{proof}
		By definition, $u(x,s)\geq{0}$. To show that $u\equiv{0}$, it suffices to prove the reverse inequality. This is achieved by the following chain of (in)equalities:
		\begin{equation*}
			\begin{aligned}
				u(x,s)&\leq\frac{1}{2}\int_{s}^{T}\left[\dot{x}^{0}_{x,s}(t)-b({x}^{0}_{x,s}(t),t)\right]^{2}\mathrm{d}t\\
				&=\frac{1}{2}\int_{s}^{\tau^{0}_{x,s}}\left[\dot{x}^{0}_{x,s}(t)-b({x}^{0}_{x,s}(t),t)\right]^{2}\mathrm{d}t\\
				&=0.
			\end{aligned}
		\end{equation*}
	\end{proof}
	
	Applying the Fleming-James theorem (Theorem \ref{theoA102} in Appendix \ref{SecA1}) to these cases yields the following result.
	\begin{proposition}\label{theo0205}
		(a) In Cases $\text{A}_{I}$, $\text{A}_{II}$, $\text{A}_{III}$, $\text{A}_{IV}$, $\text{A}_{V}$, $\text{A}_{VI}$, $\text{A}_{VII,I}$ and $\text{A}_{VII,II}$, we have
		\begin{equation}\label{eq020103}
			q^{\varepsilon}(x,s)=1+o(\varepsilon^{m}),\quad \text{as}\; \varepsilon\to{0},
		\end{equation}
		for every $m\geq{0}$, uniformly on compact subsets of ${D}\times[0,T)$.
		
		(b) In Case $\text{A}_{VII,III}$, Equation (\ref{eq020103}) holds for every $m\geq{0}$, uniformly on compact subsets of $\Sigma_1\cup\Sigma_3$.
	\end{proposition}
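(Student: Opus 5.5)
We plan to apply the Fleming--James theorem (Theorem \ref{theoA102}) with $u\equiv 0$ from Corollary \ref{theo0204}. Here the expansion (\ref{eq010106}) is expected to degenerate into $q^{\varepsilon}=1+o(\varepsilon^m)$, i.e.\ $w\equiv 0$ and $\psi_m\equiv 0$ for all $m$. The argument therefore has two parts. First, we identify the relevant strongly regular region $N$: in case (a) $N=D\times[0,T)$, and in case (b) $N=\Sigma_1\cup\Sigma_3$. Second, we verify that the transport equations of the expansion are solved by the trivial coefficients.

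For the first part, the natural candidate for the optimal path from $(x,s)$ is the deterministic trajectory $x^0_{x,s}$ up to $\tau^0_{x,s}$, which has zero action. By Corollary \ref{theo0203}(a), in Cases $\text{A}_{I}$--$\text{A}_{VII,II}$ this trajectory hits $\partial D\times[0,T)$ at a unique point $(d_2,\tau^0_{x,s})$ with $\tau^0_{x,s}<T$, and the crossing is transversal, so $\dot x^0_{x,s}(\tau^0_{x,s})>0$. The implicit function theorem applied to $x^0_{x,s}(\tau)=d_2$ then makes $\tau^0_{x,s}$ a smooth function of $(x,s)$, because $x^0_{x,s}(t)$ in (\ref{eq020102}) is real-analytic in $(x,s,t)$. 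It follows that the field of zero-action extremals $\{(x^0_{x,s}(t),t)\}$ covers $D\times[0,T)$ smoothly. The invariance of Remark \ref{rem020102} shows these extremals do not cross one another, and each reaches the lateral boundary before time $T$. These are exactly the conditions (smooth $u$, a unique optimal extremal hitting $\partial D$ transversally before $T$, no conjugate or focal points) that define strong regularity in the appendix. In Case $\text{A}_{VII,III}$ the same reasoning works on $\Sigma_1$ (transversal first exit through $d_1$) and on $\Sigma_3$ (transversal exit through $d_2$). Both sets are open, since $\Sigma_2$ is the graph of the continuous curve $x^{0,*}$. The set $\Sigma_2$ has to be excluded because there the exit is tangential and $\tau^0$ jumps.

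For the second part, recall that in the Fleming--James construction $w$ solves a linear transport equation along the extremals, $w_s+(b-u_x)w_x=\tfrac12 u_{xx}-\dots$, with boundary data $w=0$ on $\partial D$. With $u\equiv 0$ the source terms vanish, so $w\equiv 0$ is the solution. The $\psi_m$ satisfy the recursive equations $\psi_{m,s}+(b-u_x)\psi_{m,x}=$ (terms involving second derivatives of $w$ and $\psi_{m-1}$), with zero boundary data. By induction each right-hand side vanishes, so $\psi_m\equiv0$. Substituting into (\ref{eq010106}) gives (\ref{eq020103}), uniformly on compact subsets of $N$, which is exactly the uniformity the theorem provides.

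The main obstacle is checking rigorously that the hypotheses of Theorem \ref{theoA102} hold exactly, in particular its notion of strongly regular point. This requires that the infimum in (\ref{eq010108}) is attained only along $x^0_{x,s}$ and that $u$ is smooth in a neighborhood reaching up to the boundary exit point. Uniqueness is easy because any other path with zero action must coincide with the ODE solution. Near $\Sigma_2$ in Case (b), however, smoothness is lost: compact subsets of $\Sigma_1\cup\Sigma_3$ stay a positive distance from $\Sigma_2$, and we would use this distance to keep the transversality constants, and hence the implicit-function estimates, uniform.
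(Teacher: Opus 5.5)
Your route is the paper's route. You apply Fleming--James with $u\equiv0$, so $\beta=b$ and the characteristics are the deterministic trajectories $x^{0}_{x,s}$. The transport equations (\ref{eqA10108})--(\ref{eqA10109}) with zero data then force $w\equiv0$ and $\psi_m\equiv0$. The gap is that the region you feed into Theorem~\ref{theoA102} does not satisfy its hypotheses. Assumption (A) requires a single $T'<T$ with $N\subset D\times[0,T']$ and all exit points in $\Gamma_1\subset\partial D\times(0,T')$. For $N=D\times[0,T)$, or $N=\Sigma_1\cup\Sigma_3$, this fails because $\tau^{0}_{x,s}\to T$ as $s\to T$. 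Knowing that each extremal reaches $\partial D$ before $T$ is not enough, so the theorem says nothing on such an $N$. Nor can you simply cut at a level $T'$. On $D\times[0,T')$, characteristics starting near the top leave through the lid $D\times\{T'\}$ rather than through $\partial D$, which again violates $\Gamma_1\subset\partial D\times(0,T')$.

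The missing step is the paper's exhaustion by sublevel sets of the deterministic exit time, $N_{T'}\triangleq\{(x,s):\tau^{0}_{x,s}<T'\}$.
\begin{itemize}
\item By the invariance $\tau^{0}_{x^{0}_{x,s}(t),t}=\tau^{0}_{x,s}$ (Remark~\ref{rem020102}), $N_{T'}$ is a union of characteristic arcs, each ending transversally on $\{d_2\}\times(0,T')$.
\item $N_{T'}$ is open because $\tau^{0}$ is smooth, which your implicit-function argument gives. Hence Assumption (A) holds on $N_{T'}$.
\item In case (b), use $\Sigma_1$, which already lies in $D\times[0,t_2)$ and exits through $\{d_1\}\times(0,t_2)$. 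Pair it with $\{(x,s)\in\Sigma_3:\tau^{0}_{x,s}<T'\}$ for $T'\in(t_2,T)$.
\item A compact $K$ in $D\times[0,T)$ (respectively in $\Sigma_1\cup\Sigma_3$) lies in one such region. This is because $\tau^{0}$ is continuous and less than $T$ on $K$, so $\max_K\tau^{0}<T$.
\end{itemize}
With this step inserted, your argument is complete.

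As an aside, Remark~\ref{rem020103}(a) gives a shortcut. In Case A, $x^{\varepsilon}_T=x_T\notin\bar D$ almost surely and sample paths are continuous, so $q^{\varepsilon}\equiv1$ identically. The statement then follows with no asymptotic machinery at all, and it even holds on $\Sigma_2$.
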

	\begin{proof}
		Following a similar graphical approach, we illustrate the proof for Case $\text{A}_{VII}$. The remaining cases are analogous.
				
		In Cases $\text{A}_{VII,I}$ and $\text{A}_{VII,II}$, for any $T'\in(0,T)$, define 
		\begin{equation*}
			N_{T'}\triangleq\{(x,s)\in{D}\times[0,T):\tau^{0}_{x,s}\in(0,T')\}.
		\end{equation*}
		As indicated in FIGs. \ref{fig02a} and \ref{fig02b}, the region $N_{T'}$ satisfies Assumption (A) of Appendix \ref{SecA1}. Applying the method of characteristics (Remark \ref{remA101}) gives
		\begin{equation*}
			w(x,s)\equiv{0},\quad\forall\,(x,s)\in{N_{T'}},
		\end{equation*}
		\begin{equation*}
			\psi_m\equiv{0},\quad\forall\,(x,s)\in{N_{T'}},\;m=1,2,\cdots.
		\end{equation*}
		Hence, by Theorem \ref{theoA102}, Equation (\ref{eq020103}) holds uniformly on every compact subset of $N_{T'}$. 
		
		Moreover, for any compact subset $K\subset{D}\times[0,T)$, we can choose $T'\in(0,T)$ such that $K\subset{N}_{T'}$. Therefore, the asymptotic series (\ref{eq020103}) is valid uniformly on all compact subsets of ${D}\times[0,T)$.
		
		For Case $\text{A}_{VII,III}$ and any $T'\in(t_2,T)$, we define
		\begin{equation*}
			N^{(1)}_{T'}\triangleq\Sigma_1,
		\end{equation*}
		and
		\begin{equation*}
			N^{(2)}_{T'}\triangleq\{(x,s)\in\Sigma_3:\tau^{0}_{x,s}\in(0,T')\}.
		\end{equation*}
		As depicted in FIG. \ref{fig02c}, each $N^{(i)}_{T'}$ ($i\in\{1,2\}$) satisfies Assumption (A). Consequently,
		\begin{equation*}
			w(x,s)\equiv{0},\quad\forall\;(x,s)\in{N}^{(1)}_{T'}\cup{N}^{(2)}_{T'},
		\end{equation*}
		\begin{equation*}
			\psi_m\equiv{0},\quad\forall\;(x,s)\in{N}^{(1)}_{T'}\cup{N}^{(2)}_{T'},\;m=1,2,\cdots.
		\end{equation*}
		Because every compact subset $K\subset\Sigma_1\cup\Sigma_3$ is contained in ${N}^{(1)}_{T'}\cup{N}^{(2)}_{T'}$ for a suitable $T'\in(t_2,T)$, Equation (\ref{eq020103}) holds uniformly on all compact subsets of $\Sigma_1\cup\Sigma_3$. This completes the proof.
	\end{proof}
	\begin{figure}[h]
		\centering
		\subfigure{\begin{minipage}[b]{.3\linewidth}
				\centering
				\includegraphics[scale=0.3]{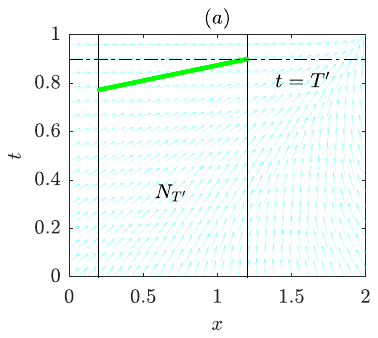}
			\end{minipage}
			\label{fig02a}
		\quad\quad\quad\quad\quad}
		\subfigure{\begin{minipage}[b]{.3\linewidth}
				\centering
				\includegraphics[scale=0.3]{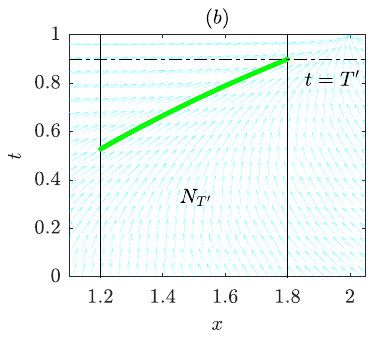}
			\end{minipage}
			\label{fig02b}
		\quad\quad\quad\quad\quad}
		
		\subfigure{\begin{minipage}[b]{.3\linewidth}
				\centering
				\includegraphics[scale=0.3]{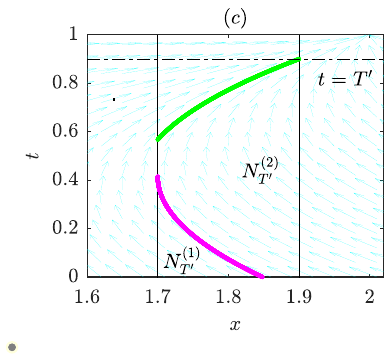}
			\end{minipage}
			\label{fig02c}
		\quad\quad\quad\quad\quad}
		\caption{Strongly regular regions for Case $\text{A}_{VII}$. Subfigures (a) and (b) correspond to Cases $\text{A}_{VII,I}$ and $\text{A}_{VII,II}$, respectively, while (c) illustrates an example for Case $\text{A}_{VII,III}$. In each subfigure, the solid green line represents a segment of the trajectory $\{(x^{0}_{x,s}(t),t):t\in[0,T]\}$ that satisfies $x^{0}_{x,s}(T')=d_2$, and the solid magenta line denotes a segment of the critical trajectory $\{(x^{0,*}(t),t):t\in[0,T]\}$. The parameters are $a_0=0$, $a_1=1$, $T=1$, $x_T=2$, $T'=0.9$ with (a) $d_1=0.2$, $d_2=1.2$; (b) $d_1=1.2$, $d_2=1.8$; (c) $d_1=1.7$, $d_2=1.9$.}
		\label{fig02}
	\end{figure}
	\begin{remark}\label{rem020103}
		(a) Observe that
		\begin{equation*}
			\mathbb{E}^{\varepsilon}_{x,s}\left(x^{\varepsilon}_{T}\right)=x_T\quad\text{and}\quad\mathbb{V}\text{ar}\left(x^{\varepsilon}_{T}\right)=\mathbb{C}\text{ov}\left(x^{\varepsilon}_{T},x^{\varepsilon}_{T}\right)=0.
		\end{equation*}
		It follows that $x^{\varepsilon}_T=x_T$ almost surely. Combined with the fact that the sample path $x^{\varepsilon}$ is almost surely continuous on $[s,T]$ (i.e., $x^{\varepsilon}\in\mathcal{C}([s,T],\mathbb{R})$ a.s.), we conclude that if $x_T\notin{D}$, then for any $(x,s)\in{D}\times[0,T)$, $x^{\varepsilon}$ must exit $D$ at some time $t\in[s,T]$ almost surely. This implies
		\begin{equation}\label{eq020104}
			q^{\varepsilon}(x,s)\equiv{1}, \quad\forall\;(x,s)\in{D}\times[0,T),\; \text{provided}\; x_T\notin{D}.
		\end{equation}
		Therefore, the conclusion of Proposition \ref{theo0205} holds trivially in Case A.

		(b) The underlying detailed dynamics, however, are far from trivial. By the law of large numbers (see, e.g., [\onlinecite[Chapter 2, Theorem 1.2.]{Freidlin_2012}]), for every $T'\in(s,T)$ and $\delta>0$,
		\begin{equation}\label{eq020105}
			\lim_{\varepsilon\to{0}}\mathbb{P}^{\varepsilon}_{x,s}\left(\sup_{t\in[s,T']}\left\vert{x}^{\varepsilon}_t-{x}^{0}_{x,s}(t)\right\vert>\delta\right)=0.
		\end{equation}
		That is, for sufficiently small $\varepsilon>0$, nearly every sample path stays within a $\delta$-tube neighborhood of the deterministic trajectory $x^{0}_{x,s}$. Consequently, when $x_T>d_2>d_1$, the following properties hold (cf. [\onlinecite[Chapter 2, Theorem 2.3.]{Freidlin_2012}]):
		\begin{itemize}
			\item In Cases $\text{A}_{I}$, $\text{A}_{II}$, $\text{A}_{III}$, $\text{A}_{IV}$, $\text{A}_{V}$, $\text{A}_{VI}$, $\text{A}_{VII,I}$, and $\text{A}_{VII,II}$, for any $(x,s)\in{D}\times[0,T)$ and $\eta>0$,
			\begin{equation}\label{eq020106}
				\lim_{\varepsilon\to{0}}\mathbb{P}^{\varepsilon}_{x,s}\left(\left\vert\tau^{\varepsilon}_{x,s}-\tau^{0}_{x,s}\right\vert>\eta\right)={0},
			\end{equation}
			\begin{equation}\label{eq020107}
				\lim_{\varepsilon\to{0}}\mathbb{P}^{\varepsilon}_{x,s}\left({x}^{\varepsilon}_{\tau^{\varepsilon}_{x,s}}=x^{0}_{x,s}(\tau^{0}_{x,s})\right)=1.
			\end{equation}
			In other words, for sufficiently small $\varepsilon$, the sample path $x^{\varepsilon}$ is overwhelmingly likely to exit $D$ through the boundary point $\{x^{0}_{x,s}(\tau^{0}_{x,s})=d_2\}$, and at a time close to the deterministic exit time $\tau^{0}_{x,s}$. 
			
			\item In Case $\text{A}_{VII,III}$, Equations (\ref{eq020106}) and (\ref{eq020107}) remain valid for all $(x,s)\in\Sigma_1\cup\Sigma_3$ and $\eta>0$. Note that $x^{0}_{x,s}(\tau^{0}_{x,s})=d_1$ for $(x,s)\in\Sigma_1$, while $x^{0}_{x,s}(\tau^{0}_{x,s})=d_2$ for $(x,s)\in\Sigma_3$. This indicates that when $(x,s)$ crosses $\Sigma_2$ transversely, discontinuities of the first exit position ${x}^{\varepsilon}_{\tau^{\varepsilon}_{x,s}}$ and the first exit time $\tau^{\varepsilon}_{x,s}$ emerge.
			
			\item For Case $\text{A}_{VII,III}$ with $(x,s)\in\Sigma_2$, Equation (\ref{eq020105}) implies that for sufficiently small $\varepsilon>0$,
			\begin{equation*}
				\mathbb{P}^{\varepsilon}_{x,s}\left({x}^{\varepsilon}_{\tau^{\varepsilon}_{x,s}}=d_1\right)\simeq\frac{1}{2},\quad\mathbb{P}^{\varepsilon}_{x,s}\left({x}^{\varepsilon}_{\tau^{\varepsilon}_{x,s}}=d_2\right)\simeq\frac{1}{2}.
			\end{equation*}
			 That is to say, nearly half of the sample paths exit $D$ through $\{d_1\}$ at a time close to $t_2$. After oscillating in the vicinity of $\{d_1\}$, these paths re-enter $D$ and ultimately cross $\{d_2\}$ at a later time close to $t_3$. The remaining half of the sample paths, which do not exit at a time near $t_2$, are more likely to exit $D$ directly through $\{d_2\}$ at a time close to $t_3$. Here, $t_3$ is defined as the unique time in $(t_2,T)$ satisfying $x^{0,*}(t_3)=d_2$, i.e.,
			 \begin{equation*}
			 	t_3\triangleq\lim_{(x,s)\in\Sigma_3,\;(x,s)\to(d_1,t_2)}\tau^{0}_{x,s}.
			 \end{equation*}
		\end{itemize}
	\end{remark}
	
	\section{Case B: $x_T\in(d_1,d_2)$}\label{Sec03}
	For simplicity, we treat the following four subcases under Case B:
	\begin{itemize}
		\item Case $\text{B}_{I}$: $-a_0/a_1>d_2>x_T>d_1$;
		\item Case $\text{B}_{II}$: $-a_0/a_1=d_2>x_T>d_1$;
		\item Case $\text{B}_{III}$: $d_2>-a_0/a_1>x_T>d_1$;
		\item Case $\text{B}_{IV}$: $d_2>-a_0/a_1=x_T>d_1$;
	\end{itemize}
	The remaining cases follow similarly.
	
	Depending on whether $\{(x^{0}_{x,s}(t),t):t\in[s,T]\}$ intersects $\partial{D}\times[0,T)$, Case $\text{B}_{I}$ splits into two subcases: $\text{B}_{I,I}$ and $\text{B}_{I,II}$. The results below parallel those in Section \ref{Sec02} (Corollaries \ref{theo0202}, \ref{theo0203}, \ref{theo0204}, and Proposition \ref{theo0205}). The proofs are omitted due to their similarity.
	\begin{corollary}\label{theo030101}
		As shown in Table \ref{Table03}, the following monotonicity properties are true:
		
		(a) In Cases $\text{B}_{I}$, $\text{B}_{II}$, and $\text{B}_{III}$, the condition $d_1<x_T<d_2$ ensures that ${D}\times[0,T)\cap\Omega$ is non-empty. Consequently, $x^{0}_{x,s}$ is monotone on $[s,T]$ for any $(x,s)\in{D}\times[0,T)\setminus\Omega$ and non-monotonic for any $(x,s)\in{D}\times[0,T)\cap\Omega$.
		
		(b) For Case $\text{B}_{IV}$, the equality $x_T+\frac{a_0}{a_1}=0$ gives $\Omega=\emptyset$. Therefore, $x^{0}_{x,s}$ is monotone on $[s,T]$ for all $(x,s)\in{D}\times[0,T)$.
	\end{corollary}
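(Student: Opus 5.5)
The argument reduces Corollary \ref{theo030101} to Lemma \ref{theo0201} and Remark \ref{rem020101}. Write $c\triangleq x_T+\frac{a_0}{a_1}$ and $y\triangleq x+\frac{a_0}{a_1}$.

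We begin with part (b). In Case $\text{B}_{IV}$ we have $c=0$, so the condition $c\neq 0$ in Lemma \ref{theo0201} fails. Hence $\Omega=\emptyset$, since $\Omega$ is only defined through the ratio $y/c$. The table rows with $c=0$ then give the type of monotonicity: decreasing for $y>0$ (that is, $x>x_T$), constant for $x=x_T$, and increasing for $x<x_T$. This proves (b).

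For part (a), first note the sign of $c$ in each subcase. In Cases $\text{B}_{I}$, $\text{B}_{II}$ and $\text{B}_{III}$ we have $-a_0/a_1>x_T$. For definiteness take the ordering as written, with $a_1$ of either sign: then $c<0$. Once nonemptiness of $D\times[0,T)\cap\Omega$ is known, the dichotomy in (a) is Lemma \ref{theo0201} verbatim. Points outside $\Omega$ fall into one of the monotone rows of Table \ref{Table01}, and points in $\Omega$ fall into the $\nearrow\searrow$ row because $c<0$. So the only real content is nonemptiness, which I would settle with an explicit point.

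Take $x=x_T\in D$, so that $y=c$ and $y/c=1$. For any $s\in[0,T)$ we have $T-s>0$ and $a_1\neq 0$, hence $\cosh(a_1(T-s))>1$. This gives
\begin{equation*}
\frac{1}{\cosh(a_1(T-s))}<1<\cosh(a_1(T-s)),
\end{equation*}
so $(x_T,s)\in\Omega$. By continuity of $y/c$ in $x$, a whole neighbourhood $(x_T-\delta,x_T+\delta)\times\{s\}$ with $\delta>0$ lies in $D\times[0,T)\cap\Omega$. This is the role of the hypothesis $d_1<x_T<d_2$: it ensures the point $x_T$ is available inside $D$. Equivalently, the line $x=x_T$ lies strictly between $\text{Cur}_I$ and $\text{Cur}_{II}$ for every $s<T$, and both curves pinch to $x_T$ as $s\to T$.

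The last step is to read off the monotonicity type for Table \ref{Table03}. Outside $\Omega$ with $c<0$, the trajectory is $\searrow$ when $y\ge 0$ or when $y/c\in(0,1/\cosh]$, and $\nearrow$ when $y/c\ge\cosh$. The main care needed is bookkeeping. One must track how $D$ sits relative to $-a_0/a_1$: in $\text{B}_{I}$ and $\text{B}_{II}$ all of $D$ has $y<0$ (with $y\le 0$ at the endpoint in $\text{B}_{II}$, which is excluded since $D$ is open), whereas in $\text{B}_{III}$ the part of $D$ with $y>0$ gives monotone decrease. One must also check which of the two curves actually enter the strip $D\times[0,T)$, since this determines the entries of Table \ref{Table03}. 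No real obstacle remains beyond this case-checking.
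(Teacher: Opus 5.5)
Your proposal is correct and matches the paper's omitted argument. The paper defers to the Section 2 analogue, where everything is read off directly from Lemma \ref{theo0201} and Table \ref{Table01}, with $x_T+a_0/a_1<0$ in Cases $\text{B}_{I}$--$\text{B}_{III}$ and $x_T+a_0/a_1=0$ in Case $\text{B}_{IV}$. Your explicit witness $(x_T,s)\in\Omega$, obtained from $\cosh(a_1(T-s))>1$, is exactly what the statement's phrase about $d_1<x_T<d_2$ refers to, so nothing is missing.
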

	\begin{table}[ht]
		\centering
		\caption{Monotonicity of $\{x^{0}_{x,s}(t):t\in[s,T]\}$ for $(x,s)\in{D}\times[0,T)$ in Case B.}
		\label{Table03}
		\resizebox{\linewidth}{!}{
			\begin{tabular}{c c c c}
				\toprule[1pt]
				\text{Cases} & \text{Subcases} & \text{Monotonicity of $x^{0}_{x,s}$} \\
				\hline
				\multirow{3}{*}{$\text{B}_{I}$} & {$\text{B}_{I,I}:\;\left\{d_2+\frac{a_0}{a_1}\geq\left(x_T+\frac{a_0}{a_1}\right)/\cosh(a_1T)\right\}$}  & {$\;\begin{cases}
						\nearrow\searrow, &\forall (x,s)\in{D}\times[0,T)\cap\Omega\\
						\nearrow\;\text{or}\;\searrow, &\forall (x,s)\in{D}\times[0,T)\setminus\Omega
					\end{cases}\;$} \\
				& {$\text{B}_{I,II}:\;\left\{d_2+\frac{a_0}{a_1}<\left(x_T+\frac{a_0}{a_1}\right)/\cosh(a_1T)\right\}$} &
				{$\;\begin{cases}
						\nearrow\searrow, &\forall (x,s)\in{D}\times[0,T)\cap\Omega\\
						\nearrow\;\text{or}\;\searrow, &\forall (x,s)\in{D}\times[0,T)\setminus\Omega
					\end{cases}\;$} \\
				\hline
				{$\text{B}_{II},\text{B}_{III}$} & ------ & {$\;\begin{cases}
						\nearrow\searrow, &\forall (x,s)\in{D}\times[0,T)\cap\Omega\\
						\nearrow\;\text{or}\;\searrow, &\forall (x,s)\in{D}\times[0,T)\setminus\Omega
					\end{cases}\;$} \\
				\hline
				{$\text{B}_{IV}$} & ------ & {$\;\begin{cases}
						\text{Constant}, &\text{if}\; x=x_T=-a_0/a_1\;\;\;\;\;\;\;\!\\
						\nearrow\;\text{or}\;\searrow, &\text{otherwise}
					\end{cases}\;$} \\
				\bottomrule[1pt]
		\end{tabular}}
	\end{table}
	\begin{corollary}\label{theo030102}
		(a) In Cases $\text{B}_{I,I}$, $\text{B}_{II}$, $\text{B}_{III}$, and $\text{B}_{IV}$, for any $(x,s)\in{D}\times[0,T)$, we have $x^{0}_{x,s}(t)\in{D}$ for all $t\in[s,T]$.
		
		(b) In contrast, Case $\text{B}_{I,II}$ exhibits distinct behavior. Let $t_4$ be the unique time in $(0,T)$ satisfying
		\begin{equation*}
			d_2+\frac{a_0}{a_1}=\left(x_T+\frac{a_0}{a_1}\right)\bigg/\cosh(a_1(T-t_4)),
		\end{equation*}
		and define another critical trajectory $x^{0,**}$ as
		\begin{equation*}
			\begin{aligned}
				x^{0,**}(t)\triangleq&{x}^{0}_{d_2,t_4}(t)\\
				=&\left({d_2+\frac{a_0}{a_1}}\right)\cosh(a_1(t_4-t))-\frac{a_0}{a_1},\quad t\in[0,T].
			\end{aligned}
		\end{equation*}
		The domain $D\times[0,T)$ can be partitioned into three distinct subdomains:
		\begin{equation*}
			\Lambda_1\triangleq\{(x,s)\in{D}\times[0,T)\cap\Omega:x>x^{0,**}(s),\;s\in[0,T)\},
		\end{equation*}
		\begin{equation*}
			\Lambda_2\triangleq\{(x,s)\in{D}\times[0,T)\cap\Omega:x=x^{0,**}(s),\;s\in[0,T)\},
		\end{equation*}
		\begin{equation*}
			\Lambda_3\triangleq{D}\times[0,T)\setminus(\Lambda_1\cup\Lambda_2).
		\end{equation*}
		Then 
		\begin{itemize}
			\item For any $(x,s)\in\Lambda_1$, $\{(x^{0}_{x,s}(t),t):t\in[s,T]\}$ exits $D\times[0,T)$ transversely through $\{d_2\}\times[0,T)$ at time $\tau^{0}_{x,s}$, re-enters the domain at a later time, and remains within $D\times[0,T)$ until reaching $(x_T,T)$. The first exit time $\tau^{0}_{x,s}$ is the smallest solution of $x^{0}_{x,s}(\tau^{0}_{x,s})=d_2$ in $(s,T)$.
			\item For any $(x,s)\in\Lambda_2$, $\{(x^{0}_{x,s}(t),t):t\in[s,T]\}$ exits $D\times[0,T)$ tangentially through $\{d_2\}\times[0,T)$ at time $\tau^{0}_{x,s}$ and, after re-entering, remains inside the domain until it reaches $(x_T,T)$. The first exit time $\tau^{0}_{x,s}$ is the unique solution of $x^{0}_{x,s}(\tau^{0}_{x,s})=d_2$ in $(s,T)$, corresponding to the tangential exit point, i.e., $\tau^{0}_{x,s}=t_4$.
			\item For any $(x,s)\in\Lambda_3$, $\{(x^{0}_{x,s}(t),t):t\in[s,T]\}$ stays inside $D\times[0,T)$ until reaching $(x_T,T)$. (In this case, we adopt the convention $\tau^{0}_{x,s}=+\infty$ so that the first exit time $\tau^{0}_{x,s}$ is well-defined.)
		\end{itemize}
	\end{corollary}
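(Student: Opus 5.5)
Set $c\triangleq a_0/a_1$, $y\triangleq x+c$ and $y_T\triangleq x_T+c$. In these shifted coordinates $D$ becomes $(d_1+c,\,d_2+c)$ and
\begin{equation*}
x^{0}_{x,s}(t)+c=\frac{y\sinh(a_1(T-t))+y_T\sinh(a_1(t-s))}{\sinh(a_1(T-s))}.
\end{equation*}
The plan is to read everything off this explicit formula together with Lemma \ref{theo0201} (Table \ref{Table01}) and Remark \ref{rem020101}.

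\textbf{Part (a).} Both endpoints lie in $D$: $x\in D$ and $x_T\in D$. So if $x^{0}_{x,s}$ is monotone (which holds off $\Omega$, and everywhere in Case $\text{B}_{IV}$ where $y_T=0$), the whole trajectory stays between $x$ and $x_T$, hence inside $D$.

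For $(x,s)\in\Omega$, the trajectory is non-monotone, so it has a single interior extremum at $t_1$. Write $Y=x^0_{x,s}+c$; it satisfies $\ddot Y=a_1^2Y$. In Cases $\text{B}_{I}$–$\text{B}_{III}$ we have $y_T<0$, so $y<0$ as well; then the profile is $\nearrow\searrow$ and the extremum is a maximum. Solving $\dot Y(t_1)=0$ gives the constant of motion $\dot Y^2-a_1^2Y^2$, which yields
\begin{equation*}
Y(t_1)=-\sqrt{y^2-\dot Y(s)^2/a_1^2}.
\end{equation*}
Equivalently, $Y(t)=Y(t_1)\cosh(a_1(t-t_1))$, so the trajectory lies on the hyperbolic-cosine family $(\tilde x+c)\cosh(a_1(t-\tilde s))$.

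By Remark \ref{rem020101}, the maximum point $(x^0(t_1),t_1)$ lies on $\text{Cur}_{I}$, i.e.\ $x^0(t_1)+c=y_T/\cosh(a_1(T-t_1))$ with $t_1\in(s,T)$. Since $y_T<0$, this quantity is increasing in $t_1$. Hence
\begin{equation*}
\max_t x^0_{x,s}(t)+c<\frac{y_T}{\cosh(a_1(T-t_1))}\le \frac{y_T}{\cosh(a_1 T)}\cdot\frac{\cosh(a_1T)}{\cosh(a_1(T-t_1))},
\end{equation*}
and the crude bound is $\max_t x^0_{x,s}(t)+c\le y_T/\cosh(a_1T)$ only when $t_1\to 0$. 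More carefully: $\sup_{t_1\in(0,T)}y_T/\cosh(a_1(T-t_1))$ equals $y_T$ (its value at $t_1=T$) or the value at $t_1=0$. Because $y_T<0$, the function $t_1\mapsto y_T/\cosh(a_1(T-t_1))$ increases toward $y_T$, so the extremum values range over $(y_T/\cosh(a_1T),\,y_T)$. This extremum is a maximum that exceeds $x_T$ yet stays below $-c$.

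The condition $\text{B}_{I,I}$, namely $d_2+c\ge y_T/\cosh(a_1T)$, then needs to be compared against these values; I expect the inequality directions to sort this out. For $\text{B}_{II}$ and $\text{B}_{III}$ we have $d_2+c\ge0>$ every value of $Y$, so the maximum stays below $d_2$ automatically. The lower side is handled the same way: the minimum is the smaller endpoint, which lies in $D$.

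\textbf{Part (b).} In $\text{B}_{I,II}$, $\text{Cur}_{I}$ crosses the level $d_2$ at a unique $t_4$; uniqueness follows from the strict monotonicity of $\cosh(a_1(T-t))$. The critical trajectory $x^{0,**}$ is the one whose maximum equals $d_2$ exactly, occurring at $t_4$, so it is tangent there.

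Trajectories through $(x,s)\in\Omega$ are ordered by comparison/uniqueness for the linear ODE, since integral curves cannot cross. Those lying above $x^{0,**}$ (the set $\Lambda_1$) therefore have a maximum exceeding $d_2$. By convexity of $|Y|$ these curves cross the level $d_2$ exactly twice and transversally: once going up, and once coming down to $x_T<d_2$. Points of $\Lambda_2$ touch $d_2$ once, tangentially, at $t_4$. Points of $\Lambda_3$, i.e.\ those below $x^{0,**}$ or outside $\Omega$, have maximum below $d_2$, or are monotone between two points of $D$, so they stay in $D$.

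\textbf{Main obstacle.} The hardest step is the comparison argument. It requires showing that the maximum of $x^0_{x,s}$ is monotone across the foliation of $\Omega$ by integral curves, and that the critical leaf is exactly $\{x=x^{0,**}(s)\}$. I will do this by parametrizing each leaf by its apex time on $\text{Cur}_{I}$ and using the explicit $\cosh$ form.
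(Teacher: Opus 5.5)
Your overall route is sound and is essentially the argument the paper has in mind. The paper omits the proof and appeals to the sign pattern of Lemma~\ref{theo0201}, the fact that the apex of a non-monotone trajectory lies on the nullcline $\text{Cur}_{I}$, and the geometry of the integral curves of $(b,1)^{\top}$. Your explicit $\cosh$ representation and non-crossing argument make that picture precise, and your treatment of $\text{B}_{II}$, $\text{B}_{III}$, $\text{B}_{IV}$ is fine.

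\textbf{The error in part (a).} The one quantitative step part (a) needs, Case $\text{B}_{I,I}$, is stated backwards and left unresolved. The map $t_1\mapsto y_T/\cosh(a_1(T-t_1))$ is strictly \emph{decreasing}, not increasing. As $t_1\uparrow T$, $\cosh(a_1(T-t_1))\downarrow 1$, so this quotient of the negative number $y_T$ decreases to $y_T$ from above. Your displayed chain is therefore wrong on two counts:
\begin{itemize}
\item the maximum of $x^0_{x,s}+c$ \emph{equals} $y_T/\cosh(a_1(T-t_1))$, rather than being strictly less than it;
\item this value is strictly below $y_T/\cosh(a_1T)$ for every $t_1\in(0,T)$, not ``only when $t_1\to 0$''.
\end{itemize}
This is not cosmetic. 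With your direction, the supremum of the apex values would be $y_T<d_2+c$. That would prove no trajectory reaches $d_2$ anywhere in Case $\text{B}_{I}$, contradicting part (b). The correct step is one line: since $t_1>s\ge 0$, in $\text{B}_{I,I}$ we get $\max_{[s,T]}x^0_{x,s}+c=y_T/\cosh(a_1(T-t_1))<y_T/\cosh(a_1T)\le d_2+c$, while the minimum is $\min(x,x_T)>d_1$.

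\textbf{Two details missing in part (b).}
\begin{itemize}
\item \emph{Existence of $t_4$.} You have $d_2+c\in(y_T,\,y_T/\cosh(a_1T))$ from $d_2>x_T$ and the $\text{B}_{I,II}$ condition. The (correct) strict decrease of the apex value then gives a unique $t_4\in(0,T)$.
\item \emph{The comparison needs $s<t_4$.} The step ``above $x^{0,**}$, hence maximum exceeds $d_2$'' compares the two curves at time $t_4$. This does hold on $\Lambda_1$. For $s\ge t_4$, $x^{0,**}(s)$ lies on its non-increasing branch, so $b(x^{0,**}(s),s)\le 0$. For $y_T<0$, the set $\{b\le 0\}$ is exactly the closed region on or above $\text{Cur}_{I}$. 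Every point of $\Omega$ lies strictly below $\text{Cur}_{I}$, so such points belong to $\Lambda_3$.
\end{itemize}
Your apex-time parametrization gives the second point directly. At the earlier apex time, a leaf with a later apex is still rising, hence strictly below $\text{Cur}_{I}$ and below the other leaf. So leaves are ordered by apex time, and $(x,s)\in\Lambda_1$ iff its apex time lies in $(s,t_4)$, iff its apex value exceeds $d_2$. With these corrections the argument is complete, including the two transversal crossings via convexity of $|Y|$.
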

	\begin{proposition}\label{theo030103}
		In Case $\text{B}_{I,II}$, we have $u(x,s)\equiv{0}$ for all $(x,s)\in\Lambda_1\cup\Lambda_2$. Consequently, $\Lambda_1$ satisfies Assumption (A), and by the Fleming-James theorem,
		\begin{equation}\label{eq030101}
			q^{\varepsilon}(x,s)=1+o(\varepsilon^{m}),\quad\text{as}\;\varepsilon\to{0},
		\end{equation}
		for every $m\geq{0}$, uniformly on every compact subset of $\Lambda_1$.
	\end{proposition}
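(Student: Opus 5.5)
The plan follows the template of Corollary \ref{theo0204} and Proposition \ref{theo0205}. There are three steps: show $u\equiv 0$ on $\Lambda_1\cup\Lambda_2$ by exhibiting a zero-cost admissible path; show $\Lambda_1$ satisfies Assumption (A); then read off $w$ and $\psi_m$ by characteristics and apply Theorem \ref{theoA102}.

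\emph{Step 1 ($u\equiv0$).} Fix $(x,s)\in\Lambda_1\cup\Lambda_2$. By Corollary \ref{theo030102}(b), the deterministic trajectory $x^{0}_{x,s}$ reaches $d_2$ at $\tau^{0}_{x,s}\in(s,T)$, transversely on $\Lambda_1$ and tangentially at $t_4$ on $\Lambda_2$. Hence $x^{0}_{x,s}\in\Gamma_{x,s}$, since the definition of $\Gamma_{x,s}$ only requires touching $\partial D$, and tangency suffices. Because $\dot x^{0}_{x,s}=b(x^{0}_{x,s},t)$, the action $I(x^{0}_{x,s})$ vanishes, so $0\le u(x,s)\le I(x^{0}_{x,s})=0$. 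This is exactly the chain of inequalities in Corollary \ref{theo0204}.

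\emph{Step 2 (Assumption (A) on $\Lambda_1$).} First, $\Lambda_1$ is open: it is cut out by the strict inequalities defining $\Omega$ together with $x>x^{0,**}(s)$, where $x^{0,**}$ is continuous. The map $(x,s)\mapsto\tau^{0}_{x,s}$ is smooth on $\Lambda_1$. To see this, apply the implicit function theorem to $F(x,s,t)=x^{0}_{x,s}(t)-d_2$, which is smooth in all variables by (\ref{eq020102}); the transversality in Corollary \ref{theo030102} gives $\partial_t F(x,s,\tau^{0}_{x,s})=\dot x^{0}_{x,s}(\tau^{0}_{x,s})>0$. By Remark \ref{rem020102}, $\tau^0$ is constant along characteristics, so $\Lambda_1$ is foliated by the zero-cost characteristics $\{(x^0_{x,s}(t),t):t\in[s,\tau^0_{x,s}]\}$. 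These reach $\{d_2\}\times[0,T)$ transversely before $T$, and $u\equiv0$ is smooth along them. Invariance of $\Lambda_1$ along these arcs up to exit follows from $\tau^0$ being constant along characteristics together with the separatrix structure of Remark \ref{rem020101}. The same picture as FIG. \ref{fig02c}, with $x^{0,**}$ in place of $x^{0,*}$, then shows that the conditions of Assumption (A) hold.

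\emph{Step 3 (transport terms and conclusion).} The transport equations for $w$ and $\psi_m$ have boundary data $w=0$ and $\psi_m=0$ on the exit boundary, since $q^\varepsilon=1$ there. Along characteristics with $\nabla u\equiv0$ they are homogeneous linear ODEs, so $w\equiv0$ and $\psi_m\equiv0$ on $\Lambda_1$. Theorem \ref{theoA102} then gives (\ref{eq030101}) uniformly on compact subsets of $\Lambda_1$. One point needs care here. Unlike the $N_{T'}$ of Proposition \ref{theo0205}, the exit times in $\Lambda_1$ are bounded by the re-entry structure. For $(x,s)\in\Lambda_1$ one has $\tau^0_{x,s}<t_4$, or at least $\tau^0_{x,s}$ stays bounded away from $T$. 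This needs checking from the explicit form $x^{0}_{x,s}(t)$, and if it holds, no truncation by $T'$ is needed.

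The main obstacle is Step 2, in particular verifying Assumption (A) near the tangential set $\Lambda_2$, which forms part of $\partial\Lambda_1$. There $\dot x^0(\tau^0)\to0$, so smoothness of $\tau^0$ degenerates. This is harmless because Assumption (A) and Theorem \ref{theoA102} are only needed on the open set $\Lambda_1$, with conclusions claimed on its compact subsets, which stay away from $\Lambda_2$.
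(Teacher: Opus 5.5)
Your proposal is correct and follows the route the paper intends. The paper omits this proof as parallel to Corollary \ref{theo0204} and Proposition \ref{theo0205}: a zero-cost deterministic path gives $u\equiv0$, Assumption (A) is checked on $\Lambda_1$, characteristics give $w\equiv\psi_m\equiv0$, and Theorem \ref{theoA102} finishes. The point you flag as needing checking does hold: trajectories of the linear ODE cannot cross, so $x>x^{0,**}(s)$ gives $x^{0}_{x,s}(t_4)>x^{0,**}(t_4)=d_2$, hence $\tau^{0}_{x,s}<t_4$ and Assumption (A) holds with $T'=t_4$, with no truncation needed.
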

	\begin{remark}\label{rem030101}
		The infinitesimal term $o(\varepsilon^{m})$ in (\ref{eq030101}) cannot be omitted, because the asymptotic series in Theorem \ref{theoA102} is essentially smooth but not analytic. To see this, let $\Gamma^{\text{C}}_{x,s}$ denote the complement of $\Gamma_{x,s}$, i.e.,
		\begin{equation*}
			\begin{aligned}
				\Gamma^{\text{C}}_{x,s}&\triangleq\{\gamma\in\mathcal{C}([s,T],\mathbb{R}):\gamma(s)=x\}\setminus\Gamma_{x,s}\\
				&=\{\gamma\in\mathcal{C}([s,T],\mathbb{R}):\gamma(s)=x,\,\gamma(t)\in{D},\forall\,t\in[s,T]\}.
			\end{aligned}
		\end{equation*}
		Then
		\begin{equation*}
			\begin{aligned}
				\mathbb{P}^{\varepsilon}_{x,s}\left(\tau^{\varepsilon}_{x,s}\leq{T}\right)&=1-\mathbb{P}^{\varepsilon}_{x,s}\left(\tau^{\varepsilon}_{x,s}>{T}\right)\\
				&=1-\mathbb{P}^{\varepsilon}_{x,s}\left(x^{\varepsilon}\in\Gamma^{\text{C}}_{x,s}\right).
			\end{aligned}
		\end{equation*}
		By Borovkov's characterization of the large deviation principle (cf. [\onlinecite[Chapter 3, Theorem 3.4]{Freidlin_2012}]), for each $(x,s)\in\Lambda_1$, there exist constants $C_1>C_2>0$ (determined by the minimum $\inf_{\gamma\in{\Gamma}^{\text{C}}_{x,s}}I(\gamma)$) such that for all sufficiently small $\varepsilon$,
		\begin{equation*}
			\exp\left(-{C_1}/{\varepsilon}\right)\leq\mathbb{P}^{\varepsilon}_{x,s}\left(x^{\varepsilon}\in\Gamma^{\text{C}}_{x,s}\right)\leq\exp\left(-{C_2}/{\varepsilon}\right).
		\end{equation*}
		This implies
		\begin{equation*}
			q^{\varepsilon}(x,s)\simeq{1}-\exp\left(-{C}/{\varepsilon}\right)
		\end{equation*}
		for some $C>0$. Since the function $\exp\left(-{C}/{\varepsilon}\right)$ is smooth but non-analytic in $\varepsilon$ at $\varepsilon=0$, it cannot be expanded in a convergent Taylor series. Consequently, although Equation (\ref{eq030101}) holds for every $m\geq{0}$, the term $o(\varepsilon^{m})$ cannot be neglected as it represents an exponentially small contribution. This contrasts with that of Case $\text{A}$ (cf. Remark \ref{rem020103}(a)).
	\end{remark}
	
	We now proceed to derive the asymptotic series of $q^{\varepsilon}$ for the remaining points $(x,s)$ with $u(x,s)\neq{0}$. For such points, the function $u$ can be expressed as follows.
	\begin{lemma}\label{theo030104}
		In Cases $\text{B}_{I}$, $\text{B}_{II}$, $\text{B}_{III}$, and $\text{B}_{IV}$, we have
		\begin{equation}\label{eq030102}
			u(x,s)=\inf_{d\in\partial{D}}\,\inf_{s<t\leq{T}}u(x,s;d,t),
		\end{equation}
		for every $(x,s)\in{D}\times[0,T)$, where
		\begin{equation}\label{eq030103}
			\begin{aligned}
				u(x,s;d,t)\triangleq&\inf_{\gamma\in\mathcal{C}([s,T],\mathbb{R}),\;\gamma(s)=x,\;\gamma(t)=d}I(\gamma)\\
				=&\frac{a_1\sinh(a_1(T-s))\left(d-x^{0}_{x,s}(t)\right)^2}{2\sinh(a_1(T-t))\sinh(a_1(t-s))}.
			\end{aligned}
		\end{equation}
	\end{lemma}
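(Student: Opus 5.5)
Two things must be shown: the reduction of $u$ to a two-point problem, and the closed form of that two-point value.

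\emph{Reduction.} By definition, $\Gamma_{x,s}=\bigcup_{d\in\partial D}\bigcup_{t\in(s,T]}\{\gamma:\gamma(s)=x,\ \gamma(t)=d\}$, since $\partial D=\{d_1,d_2\}$. Hence the infimum over $\Gamma_{x,s}$ splits as the iterated infimum over $d$, then over $t$, then over paths pinned at $(t,d)$. This gives (\ref{eq030102}) with $u(x,s;d,t)$ as in the first line of (\ref{eq030103}). At $t=T$ the constraint $\gamma(T)=d\neq x_T$ forces $u(x,s;d,T)=+\infty$, which is consistent with the formula because $\sinh(a_1(T-t))\to 0$. So only $t\in(s,T)$ matters.

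\emph{Computing $u(x,s;d,t)$.} On $[t,T]$ the path is free, so I take $\gamma=x^{0}_{d,t}$ there, contributing zero cost. Writing $\xi=x+a_0/a_1$, $\eta=d+a_0/a_1$ and $\beta=x_T+a_0/a_1$, the drift is linear:
\[
b=a_1\frac{\beta-y\cosh(a_1(T-t))}{\sinh(a_1(T-t))},\qquad y=\gamma+a_0/a_1 .
\]
On $[s,t]$ the problem is a linear-quadratic fixed-endpoint problem. The Euler--Lagrange equation for $L=\tfrac12(\dot y-b(y,t))^2$ with linear $b$ gives $\ddot y=a_1^2 y$. This is easiest seen by observing that the OU bridge is an $h$-transform of the OU process $dy=a_1y\,dt+\sqrt\varepsilon\,dw$. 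The bridge action therefore equals the OU action minus a boundary term: for any path $\gamma$,
\[
I(\gamma)=\tfrac12\int_s^T(\dot y-a_1y)^2\,dt-\tfrac12\int_s^T(\dot y_0-a_1y_0)^2\,dt,
\]
provided the path ends at $x_T$, where $y_0=x^0_{x,s}+a_0/a_1$ denotes the bridge mean path.

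I would verify this identity directly. Expand $(\dot y-b)^2$, note that $b=a_1y+\partial_y\phi$ with $\phi(y,t)=-\tfrac{a_1}{2}\frac{(\beta-y e^{-a_1(T-t)})^2}{\dots}$ (the log-transition density), and integrate the cross term using the HJB equation satisfied by $\phi$. Either way, the minimization reduces to two OU-type two-point problems. The minimizer of $\tfrac12\int(\dot y-a_1y)^2$ between $(s,\xi)$ and $(t,\eta)$ has the known closed value
\[
\frac{a_1\,(\eta-\xi e^{a_1(t-s)})^2}{e^{2a_1(t-s)}-1}.
\]
The same formula with $(t,\eta)\to(T,\beta)$ covers the second leg, and subtracting the unconstrained value $\frac{a_1(\beta-\xi e^{a_1(T-s)})^2}{e^{2a_1(T-s)}-1}$ completes the computation. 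Completing the square in $\eta$ should collapse the result to
\[
\frac{a_1\sinh(a_1(T-s))(\eta-y_0(t))^2}{2\sinh(a_1(T-t))\sinh(a_1(t-s))},
\]
and $\eta-y_0(t)=d-x^0_{x,s}(t)$.

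\emph{Alternative route and main obstacle.} A cleaner route is Gaussian. Here $u(x,s;d,t)=\tfrac{\varepsilon}{2}(d-m_t)^2/\mathrm{Var}(x^\varepsilon_t)$, because for a Gaussian process the rate function of the one-dimensional marginal is the contracted rate function. Using (\ref{eq010103})--(\ref{eq010104}) with $v=t$ gives $\mathrm{Var}=\frac{\varepsilon}{a_1}\frac{\sinh(a_1(t-s))\sinh(a_1(T-t))}{\sinh(a_1(T-s))}$, which yields (\ref{eq030103}) immediately. To justify the contraction principle, I note that $\inf\{I(\gamma):\gamma(t)=d\}$ equals the rate function of $x^\varepsilon_t$, by Freidlin--Wentzell and the fact that the good rate function of a linear image of a Gaussian is the quadratic form. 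I expect this justification to be the main obstacle, specifically making it rigorous for the bridge drift, which is singular at $T$. I would handle this by restricting to $[s,T']$ with $T'\in(t,T)$, where $b$ is smooth and Lipschitz, and taking the path on $[T',T]$ equal to the deterministic flow at zero cost.
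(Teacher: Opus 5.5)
Your proposal is correct. The reduction step, writing $\Gamma_{x,s}$ as the union over $(d,t)\in\partial D\times(s,T]$ of the pinned-path sets, is the same as the paper's. The two arguments differ in how they obtain the closed form (\ref{eq030103}).

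\textbf{The paper's route.} It works directly with the bridge Lagrangian. It solves the fixed-endpoint Hamilton equations on $[s,t]$ and writes the extremal $\gamma_{x,s;d,t}$ explicitly as a combination of $\sinh(a_1(t-v))$ and $\sinh(a_1(v-s))$. It then appends the zero-cost flow $x^0_{d,t}$ on $[t,T]$ and evaluates the action integral. It leaves implicit why the extremal is a minimizer; the reason is joint convexity of $\tfrac12(\dot y-b)^2$ in $(y,\dot y)$ when $b$ is affine. It also does not separately treat the endpoint $t=T$.

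\textbf{Your route.} You use the $h$-transform structure $b=a_1y-\partial_yV$, where $V(y,t)=a_1(\beta-ye^{a_1(T-t)})^2/(e^{2a_1(T-t)}-1)$ is the OU cost-to-go. Since $V$ solves $\partial_tV+a_1y\,\partial_yV-\tfrac12(\partial_yV)^2=0$, one gets $(\dot y-b)^2=(\dot y-a_1y)^2+2\tfrac{d}{dt}V(y(t),t)$. Hence $u(x,s;d,t)$ equals two OU two-point costs minus $V(\xi,s)$. Completing the square in $\eta$ then gives (\ref{eq030103}): the quadratic coefficient is $\frac{a_1\sinh(a_1(T-s))}{2\sinh(a_1(t-s))\sinh(a_1(T-t))}$, and by dynamic programming the minimum over $\eta$ is zero, attained at $\eta=y_0(t)$.

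\textbf{What each buys.} Your route gets optimality for free from the known OU values and reads the formula as a joint-minus-marginal rate. Your remark that $u(x,s;d,T)=+\infty$ also cleanly disposes of $t=T$. The paper's route is more elementary. More importantly for what follows, it produces the explicit extremal $\gamma_{x,s;d,t}$, which is used later in Remark \ref{rem030103} and Lemma \ref{theo030106}. Your Gaussian alternative reads (\ref{eq030103}) off (\ref{eq010103})--(\ref{eq010104}) in one line. Its cost is identifying the point-constrained Freidlin--Wentzell infimum with the marginal Gaussian rate, and your truncation to $[s,T']$ handles that adequately.

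One small slip: in your $\phi$, the exponential should multiply $y$ with a positive exponent, $e^{+a_1(T-t)}$; equivalently, $\phi=-V$ as above. The identity you state for $I(\gamma)$ is nonetheless correct.
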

	\begin{proof}
		Fix $d\in\partial{D}$ and $t\in(s,T]$. Let $\gamma$ denote a path contained in $\Gamma_{x,s}$ such that $\gamma(s)=x$ and $\gamma(t)=d$. The constrained optimization problem in (\ref{eq030103}) over such paths is a standard problem in the calculus of variations with fixed endpoints. The associated Hamilton equations are
		\begin{equation*}
			\frac{\mathrm{d}\gamma}{\mathrm{d}v}=\frac{\partial{H}}{\partial{\alpha}},\quad\frac{\mathrm{d}\alpha}{\partial{v}}=-\frac{\partial{H}}{\partial{\gamma}},\quad s<v<t,
		\end{equation*}
		with boundary conditions $\gamma(s)=x$, $\gamma(t)=d$, and Hamiltonian
		\begin{equation*}
			H(\gamma,\alpha,v)\triangleq{b}(\gamma,v)\alpha+\frac{1}{2}\alpha^2.
		\end{equation*}
		For each $(d,t)\in\partial{D}\times(s,T]$, there exists a unique smooth solution $\gamma_{x,s;d,t}\in\mathcal{C}^{\infty}([s,t],\mathbb{R})$, given by
		\begin{equation*}
			\begin{aligned}
				\gamma_{x,s;d,t}(v)=&\left(x+\frac{a_0}{a_1}\right)\frac{\sinh(a_1(t-v))}{\sinh(a_1(t-s))}\\
				+&\left(d+\frac{a_0}{a_1}\right)\frac{\sinh(a_1(v-s))}{\sinh(a_1(t-s))}-\frac{a_0}{a_1}, \quad v\in[s,t].
			\end{aligned}
		\end{equation*}
		Now let $\gamma:[s,T]\to\mathbb{R}$ be a path defined by
		\begin{equation*}
			\gamma(v)=\begin{cases}
				\gamma_{x,s;d,t}(v), & v\in[s,t]\\
				x^{0}_{d,t}(v),& v\in(t,T].
			\end{cases}
		\end{equation*}
		Then,
		\begin{equation*}
			\begin{aligned}
				u(x,s;d,t)=&\frac{1}{2}\int_{s}^{T}[\dot{\gamma}(v)-b(\gamma(v),v)]^2\mathrm{d}v\\
				=&\frac{1}{2}\int_{s}^{t}[\dot{\gamma}_{x,s;d,t}(v)-b(\gamma_{x,s;d,t}(v),v)]^2\mathrm{d}v,
			\end{aligned}
		\end{equation*}
		which, after evaluating the integral, yields the closed-form expression in (\ref{eq030103}). 
		
		Finally, by the definition of $u$, the minimum of $I(\gamma)$ over all paths starting at $x$ at time $s$ and reaching $\partial{D}$ before time $T$ equals the minimum of $u(x,s;d,t)$ over $(d,t)\in\partial{D}\times(s,T]$. Hence, Equation (\ref{eq030102}) follows. This completes the proof.
	\end{proof}
	\begin{remark}\label{rem030102}
		In Case $\text{B}_{I,II}$, we have
		\begin{equation*}
			u(x,s)=u\left(x,s;x^{0}_{x,s}(\tau^{0}_{x,s}),\tau^{0}_{x,s}\right)\equiv{0},\quad\forall\;(x,s)\in\Lambda_1\cup\Lambda_2.
		\end{equation*}
		This is consistent with Proposition \ref{theo030103}.
	\end{remark}
	\begin{remark}\label{rem030103}
		(a) For any $v\in(s,t)$, let $y=\gamma_{x,s;d,t}(v)$. One can verify that
		\begin{equation*}
			\gamma_{x,s;y,v}(\tau)=\gamma_{x,s;d,t}(\tau),\quad\forall \tau\in[s,v],
		\end{equation*}
		\begin{equation*}
			\gamma_{y,v;d,t}(\tau)=\gamma_{x,s;d,t}(\tau),\quad\forall \tau\in[v,t].
		\end{equation*}
		That is, every segment of $\gamma_{x,s;d,t}$ inherits the same functional form as the original path.
		
		(b) It is easy to show that
		\begin{equation*}
			u(x,s;d,t)\leq u(x,s;y,v)+u(y,v;d,t),\quad \forall (y,v)\in \mathbb{R}\times(s,t),
		\end{equation*}
		with equality if and only if $y=\gamma_{x,s;d,t}(v)$.
		
		(c) Note that for each $d\in\partial{D}$, 
		\begin{equation}\label{eq030104}
				u(x,s;d,t)\simeq\frac{a_1\left(d-x\right)^2}{2\sinh(a_1(t-s))}\to+\infty,\quad\text{as}\;t\to{s},
		\end{equation}
		\begin{equation}\label{eq030105}
			u(x,s;d,t)\simeq\frac{a_1\left(d-x_T\right)^2}{2\sinh(a_1(T-t))}\to+\infty,\quad\text{as}\;t\to{T}.
		\end{equation}
		The infimum of $u(x,s;d,t)$ is attained at one or more points in $\partial{D}\times(s,T)$. Consequently,
		\begin{equation}\label{eq030106}
			u(x,s)=\min_{d\in\partial{D}}\,\min_{s<t<T}u(x,s;d,t)
		\end{equation}
		
		(d) Moreover, by Corollary \ref{theo030102}, for $(x,s)\in{D}\times[0,T)$ in Cases $\text{B}_{I,I}$, $\text{B}_{II}$, $\text{B}_{III}$, and $\text{B}_{IV}$ (and for $(x,s)\in\Lambda_3$ in Case $\text{B}_{I,II}$),	we have
		\begin{equation*}
			u(x,s;d,t)>0,\quad\forall(d,t)\in\partial{D}\times(s,T),
		\end{equation*}
		which implies $u(x,s)>0$.
	\end{remark}
	
	For any $(x,s)$ with $u(x,s)>0$, let $(d_{x,s},\nu_{x,s})\in\partial{D}\times(s,T)$ be a point at which the infimum of $u(x,s;d,t)$ is attained. Regardless of whether this point is unique, we refer to $d_{x,s}$ as an optimal exit position and to $\nu_{x,s}$ as an optimal exit time. The associated optimal path is defined as
	\begin{equation*}
		\gamma_{x,s}(t)\triangleq\gamma_{x,s;d_{x,s},\nu_{x,s}}(t),\quad\forall\;t\in[s,\nu_{x,s}],
	\end{equation*} 
	which is clearly of class $\mathcal{C}^{\infty}$ on $[s,\nu_{x,s}]$. 
	\begin{lemma}\label{theo030105}
		For every $(x,s)$ with $u(x,s)>0$, the following properties hold:
		
		(a) $u(x,s)\to\infty$ as $s\to{T}$, uniformly for $x$ in every compact subset of $D$; 
		
		(b) For each $T'\in[0,T)$, $u(x,s)$ is bounded on ${D}\times[0,T']$;
		
		(c) $\nu_{x,s}\to{T}$ as $s\to{T}$, uniformly for $x$ in $D$; 
		
		(d) For each $T'\in[0,T)$, there exists $\delta>0$ such that $\nu_{x,s}\in(s,T-\delta)$ for all $(x,s)\in{D}\times[0,T']$; 
		
		(e) For every compact $K\subset{D}\times[0,T)$, there exists $\delta>0$ such that $\nu_{x,s}\in(s+\delta,T-\delta)$ for all $(x,s)\in{K}$.
		
		(f) $u(x,s)\to{0}$ as $\text{dist}(x,\partial{D})\to{0}$, uniformly for $s$ in every compact subset of $[0,T)$;
		
		(g) $\nu_{x,s}\to{s}$ as $\text{dist}(x,\partial{D})\to{0}$, uniformly for $s$ in every compact subset of $[0,T)$;
		
		(h) For each $T'\in[0,T)$, $u(x,s)$ is Lipschitz continuous on $\bar{D}\times[0,T']$.
	\end{lemma}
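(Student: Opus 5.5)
Everything will be derived from the explicit formula (\ref{eq030103}) together with the representation (\ref{eq030106}). Write $u(x,s;d,t)=\frac{a_1\sinh(a_1(T-s))}{2\sinh(a_1(T-t))\sinh(a_1(t-s))}\,(d-x^0_{x,s}(t))^2$. The prefactor $g(s,t)\triangleq a_1\sinh(a_1(T-s))/[2\sinh(a_1(T-t))\sinh(a_1(t-s))]$ is positive, since $a_1$ and $\sinh(a_1\cdot)$ share the same sign. The only geometric input will be the boundedness of $D$: all the quantities $|d-x^0_{x,s}(t)|$, $|x-d|$ and $|x_T-d|$ lie between $0$ and $\mathrm{diam}$-type constants.

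\textbf{Upper bounds (b), (f), and the boundary estimate in (g).} For (b), I would take the test exit time $t=(s+T)/2$ and any $d\in\partial D$. With $s\le T'$, the factor $g$ is then bounded by a constant depending only on $T-T'$, and $(d-x^0_{x,s}(t))^2$ is bounded because $x^0_{x,s}(t)$ is a convex-type combination of $x$ and $x_T$ with bounded coefficients. For (f), let $d$ be the nearest boundary point and choose $t=s+\sqrt{|x-d|}$. The paths satisfy $x^0_{x,s}(t)-x=O(t-s)$ and $g\approx 1/(2(t-s))$, so $u(x,s)\le C(|x-d|+(t-s))^2/(t-s)\to0$, uniformly for $s\le T'$.

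\textbf{Lower bounds (a), (c), (d), (e), (g).} For (a), fix a compact $K\subset D$. For $t\in(s,T)$ I use the elementary lower bound $g(s,t)\ge c/\min(t-s,T-t)$. Since $x^0_{x,s}(t)$ interpolates from $x\in K$ to $x_T\in D$, and the interpolation stays in a compact subset of $D$ when $T-s$ is small (the deviation is $O(T-s)$), we get $|d-x^0_{x,s}(t)|\ge\rho>0$. This yields $u\ge c\rho^2/(T-s)\to\infty$.

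For (c)/(d), I compare with the bound from (b). At the optimum, $u(x,s)=g(s,\nu)(d-x^0(\nu))^2$. If $\nu$ is close to $T$, then $x^0(\nu)$ is close to $x_T$, which is a fixed distance from $\partial D$, so $u\ge c\rho^2/(T-\nu)$. This must be at most the bound $C_{T'}$, forcing $T-\nu\ge\delta$; that is (d).

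For (c), I would use the reverse inequality $u(x,s)\le u(x,s;d,t)$ with a well-chosen $t$, and show that $\nu$ cannot stay bounded away from $T$. Since $\nu>s$, we have $\nu\ge s\to T$ trivially, so (c) is immediate.

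For (e), the bound away from $T$ comes from (d). The bound away from $s$ uses $\mathrm{dist}(K,\partial D)\ge\rho$: if $\nu-s$ is small, then $|d-x^0(\nu)|\ge\rho/2$ and $u\ge c\rho^2/(\nu-s)$, which contradicts (b). For (g), the optimum satisfies $u\ge g\,(d-x^0(\nu))^2$. When $\nu-s$ is bounded below, $x^0(\nu)$ is interior-ish and $|d-x^0(\nu)|$ stays bounded below on the relevant portion; combining this with (f) ($u\to0$) forces $\nu\to s$. I expect this step to need care, specifically to show that $x^0_{x,s}(\nu)$ stays away from $\partial D$ once $\nu-s\ge\eta$. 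That fails in the regions where $u=0$, but here $u>0$. I would handle it by a compactness/contradiction argument using the continuity of $(x,s,d,t)\mapsto u(x,s;d,t)$ on $\{t-s\ge\eta\}$.

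\textbf{Lipschitz (h); main obstacle.} $u$ is an infimum over $(d,t)$ of the functions $u(\cdot,\cdot;d,t)$, so it suffices to show these are Lipschitz in $(x,s)$ uniformly over the relevant $(d,t)$. The difficulty is that $\partial_x u(x,s;d,t)$ carries a factor $g\sim 1/(t-s)$, which blows up as $t\to s$, exactly near $\partial D$ where (g) says $\nu\to s$. My plan is to compute $\partial_x u=-2g\,(d-x^0(t))\,\partial_x x^0(t)$ and to use, at the optimum, the bound $g\,|d-x^0(\nu)|=\sqrt{g\,u}$. To get a uniform bound I must show that $|d-x^0(\nu)|/(\nu-s)$ stays bounded, i.e.\ that the optimal path has bounded initial slope. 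I would obtain this from the first-order condition $\partial_t u(x,s;d,t)=0$ at $t=\nu$, which relates $(d-x^0(\nu))/(\nu-s)$ to bounded quantities. A Danskin/envelope argument then gives one-sided derivative bounds for the infimum, hence Lipschitz continuity on $\bar D\times[0,T']$. This step is where I expect the real work.
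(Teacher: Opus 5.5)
You handle (a)--(f) the same way the paper does. You use test exit times ($t=(s+T)/2$ for (b), $t$ close to $s$ for (f)); you use the blow-up of the prefactor as $t\to s$ or $t\to T$, combined with (b), for (a), (d), (e); and you use $\nu_{x,s}>s$ for (c). Your choice $t-s=\sqrt{|x-d|}$ in (f) only gives $u=O(\mathrm{dist}(x,\partial D)^{1/2})$. The paper takes $t-s=|x-d|$ and gets a linear rate, which its proof of (h) needs when it continues a translated path from $(d_{x_1,s_2}+\Delta,\nu_{x_1,s_2})$ to $\partial D$.

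\textbf{Part (h): a different route.} The paper perturbs an optimal path: a time shift controlled by the Lipschitz constant of $b$, and a spatial translation followed by an optimal continuation. You instead bound $\nabla_{(x,s)}u(x,s;d,\nu)$ at an optimizer and pass to $u$ through upper Dini derivatives. The slope bound you postpone is explicit. First, $\partial_x u(x,s;d,t)=-a_1(d-x^0_{x,s}(t))/\sinh(a_1(t-s))$. Eliminating $x$ with the first-order condition $g(x,s;d,\nu)=0$ (the factor $g$ of Remark~\ref{rem030104}, not your prefactor of the same name) gives
\[
\frac{a_1\bigl(d-x^{0}_{x,s}(\nu)\bigr)}{\sinh(a_1(\nu-s))}=\frac{2a_1\bigl[(d+a_0/a_1)\cosh(a_1(T-\nu))-(x_T+a_0/a_1)\bigr]}{\sinh(a_1(T-s))}.
\]
This is bounded on $\bar D\times[0,T']$ without even using (d). The fixed-endpoint cost solves the Hamilton--Jacobi equation, so $\partial_s u(x,s;d,\nu)=-b\,\partial_xu+\tfrac12(\partial_xu)^2$ is bounded as well.

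You should state the mechanism precisely. The functions $u(\cdot,\cdot;d,t)$ are not uniformly Lipschitz on neighbourhoods, because their constants degenerate as $\nu-s\to0$. What you actually use is $u\le u(\cdot,\cdot;d_{x,s},\nu_{x,s})$ with equality at $(x,s)$, which bounds the upper Dini derivatives of $u$ at $(x,s)$. Turning that into a Lipschitz bound along segments needs continuity of $u$, which follows from (e) and the joint continuity of $u(x,s;d,t)$ on $\{t>s\}$. Your route gives an explicit constant and avoids the continuation step, but relies on the closed form; the paper's argument works for any Lipschitz drift.

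\textbf{Part (g): a genuine gap.} Your (g) follows the paper's argument: a lower bound on late exits plus (f). The step you flag is where it breaks, and $u>0$ does not rescue it. Compactness gives a limit $(d^*,s^*)\in\partial D\times[0,T)$, and you need $\inf_{t\ge s^*+\eta}u(d^*,s^*;d,t)>0$ for both endpoints $d$. That means $x^0_{d^*,s^*}$ must never meet $\partial D$ again. Positivity of $u$ at nearby points only keeps their own trajectories inside $D$; those trajectories may still approach the other endpoint.

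Here is a concrete configuration. In Case $\mathrm{B}_{I,II}$, suppose $x^{0,**}(s^*)=d_1$ for some $s^*\in(0,t_4)$. For example, take $a_0=0$, $a_1=1$, $T=4$, $d_1=-3$, $x_T=-2$, $d_2=-1$; then $t_4\approx2.68$ and $s^*\approx0.92$.
\begin{enumerate}
\item Take $(x,s)\in\Lambda_3$ with $x-d_1=\epsilon$ and $x^{0,**}(s)-x=\epsilon^2$, so that $s\to s^*$.
\item Then $d_2-x^0_{x,s}(t_4)=\epsilon^2\sinh(a_1(T-t_4))/\sinh(a_1(T-s))$ and the prefactor is bounded, so $u(x,s;d_2,t_4)=O(\epsilon^4)$.
\item On the other hand, $x^0_{x,s}\ge x$ on $[s,T]$ and the drift at $(d_1,s^*)$ points into $D$, so every exit through $d_1$ costs at least $c\,\epsilon$.
\item Hence $d_{x,s}=d_2$ and $\nu_{x,s}\to t_4\neq s^*$, while $\mathrm{dist}(x,\partial D)\to0$.
\end{enumerate}
So the uniform statement (g) fails in this configuration, and no compactness argument can close your step.

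The paper's proof has the same hole. It asserts $x^0_{d,s}(t)\in D$ for $t\in(s,T]$ and only bounds exits through the nearest endpoint. Your argument does go through when trajectories from the limiting boundary points stay in $D$ for $t>s^*$. This holds in Cases $\mathrm{B}_{I,I}$, $\mathrm{B}_{II}$, $\mathrm{B}_{III}$, $\mathrm{B}_{IV}$, by Corollary~\ref{theo030102}(a) and the monotonicity of $x\mapsto x^0_{x,s}(t)$.
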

	\begin{proof}
		(a) Let $E\subset{D}$ be compact. By Corollary \ref{theo030102}, $x^{0}_{x,s}(t)\in{D}$ for all $t\in[s,T]$ and all $x\in{E}$ as $s\to{T}$. Hence, there exists a constant $C$ such that
		\begin{equation*}
			\left\vert d-x^{0}_{x,s}(t)\right\vert>C>0,\quad\forall\;t\in[s,T],\;\forall d\in\partial{D},\;\forall\;x\in{E}.
		\end{equation*}
		Then for any $d\in\partial{D}$ and $t\in(s,T)$,
		\begin{equation*}
			\begin{aligned}
				u(x,s;d,t)&\simeq\frac{(T-s)(d-x^{0}_{x,s}(t))^2}{2(T-t)(t-s)}\\
				&\geq\frac{2C^2}{T-s}\to+\infty,\quad\forall\;x\in{E},\;\text{as}\;s\to{T},
			\end{aligned}
		\end{equation*}
		which proves (a).
		
		(b) Fix $T'\in[0,T)$. Let $(x,s)\in{D}\times[0,T']$ be a point so that $x^{0}_{x,s}(t)\in{D}$ for all $t\in[s,T]$. Then for any $d\in\partial{D}$,
		\begin{equation*}
			\begin{aligned}
				u(x,s)\leq& u(x,s;d,(T+s)/2)\\
				=&\frac{a_1(d-x^{0}_{x,s}((T+s)/2))^2}{\tanh(a_1(T-s)/2)}\\
				\leq&\frac{a_1(d_2-d_1)^2}{\tanh(a_1(T-T')/2)}<\infty,
			\end{aligned}
		\end{equation*}
		which proves (b).
		
		(c) This follows directly from the definition of $\nu_{x,s}$, i.e., $\nu_{x,s}\in(s,T)$ for all $x$ in $D$.
		
		(d) Set $M\triangleq\max_{(x,s)\in{D}\times[0,T']}u(x,s)+1$. By property (\ref{eq030105}), there exists $\delta>0$ such that
		\begin{equation*}
			\begin{aligned}
				&u(x,s;d,t)>M,\\
				&\forall(x,s)\in{D}\times[0,T'],\;\forall(d,t)\in\partial{D}\times[T-\delta,T).
			\end{aligned}
		\end{equation*}
		Since $u(x,s)=u(x,s;d_{x,s},\nu_{x,s})<M$, the pair $(d_{x,s},\nu_{x,s})$ cannot belong to $\partial{D}\times[T-\delta,T)$. Therefore, $\nu_{x,s}\in(s,T-\delta)$.
		
		(e) Let $K$ be a compact subset of ${D}\times[0,T)$ and set $M\triangleq\max_{(x,s)\in{K}}u(x,s)+1$. By property (\ref{eq030104}), we can choose $\delta>0$ so that
		\begin{equation*}
			u(x,s;d,t)>M,\quad\forall{(x,s)}\in{K},\;\forall(d,t)\in\partial{D}\times(s,s+\delta].
		\end{equation*}
		Therefore, $(d_{x,s},\nu_{x,s})\notin\partial{D}\times(s,s+\delta]$. Combined with (d), we conclude that $\nu_{x,s}\in(s+\delta,T-\delta)$ for all $(x,s)\in{K}$. 
		
		(f) As $\text{dist}(x,\partial{D})\to{0}$, pick $d\in\partial{D}$ with $\vert x-d\vert\to{0}$. Then
		\begin{equation*}
			\begin{aligned}
				u(x,s)\leq &u(x,s;d,s+\vert x-d\vert)\\
				\simeq&[a_1(d+a_0/a_1)\cosh(a_1(T-s))+\text{sign}(d-x)\\
				&\times\sinh(a_1(T-s))-a_1(x_T+a_0/a_1)]^2\\
				&/[2\sinh^2(a_1(T-s))]\times\vert x-d\vert+o(\vert x-d\vert),\\
			\end{aligned}
		\end{equation*}
		which tends to zero as $\vert x-d\vert\to{0}$, uniformly for $s$ in every compact subset of $[0,T)$. This proves (f).
		
		(g) Fix $T'\in[0,T)$ and $\delta\in(0,T-T')$. Let $(x,s)\in{D}\times[0,T']$ denote a point so that $x^{0}_{x,s}(t)\in{D}$ for all $t\in[s,T]$. Since $\text{dist}(x,\partial{D})\to{0}$, we choose $d\in\partial{D}$ such that $\vert x-d\vert\to{0}$. By the continuous dependence of $x^{0}_{x,s}$ on the initial point $x$, we know that $x^{0}_{d,s}(s)=d\in\partial{D}$ and $x^{0}_{d,s}(t)\in{D}$ for all $t\in(s,T]$. Define
		\begin{equation*}
			C_1(\delta)\triangleq\min_{t\in[s+\delta,T]}\vert d-x^{0}_{d,s}(t)\vert>0.
		\end{equation*}
		Choose $\eta_1>0$ sufficiently small so that for all $\vert x-d\vert<\eta_1$,
		\begin{equation*}
			\max_{t\in[s+\delta,T]}\vert x^{0}_{x,s}(t)-x^{0}_{d,s}(t)\vert<C_1(\delta)/2.
		\end{equation*}
		Then, for such $x$,
		\begin{equation*}
			\min_{t\in[s+\delta,T]}\vert d-x^{0}_{x,s}(t)\vert>C_1(\delta)/2.
		\end{equation*}
		Define
		\begin{equation*}
			\begin{aligned}
				C_2=&\min_{t\in(s,T)}\frac{a_1\sinh(a_1(T-s))}{2\sinh(a_1(T-t))\sinh(a_1(t-s))}\\
				=&\frac{a_1}{\tanh(a_1(T-s)/2)}.
			\end{aligned}
		\end{equation*}
		Consequently, for all $t\in[s+\delta,T)$ and all $\vert x-d\vert<\eta_1$,
		\begin{equation*}
			u(x,s;d,t)>\frac{C_1^2(\delta)C_2}{4}.
		\end{equation*}
		By (f), $u(x,s)\to{0}$ as $\vert x-d\vert\to{0}$. Hence, there exists $\eta_2\in(0,\eta_1)$ such that $u(x,s)<{C_1^2(\delta)C_2}/{4}$ whenever $\vert x-d\vert<\eta_2$. Since the cost $u(x,s;d,t)$ for exiting at any time $t\geq s+\delta$ is bounded below by this positive constant, the optimal exit time $\nu_{x,s}$, which achieves the smaller cost $u(x,s)$, must satisfy $\nu_{x,s}<s+\delta$ for all $\vert x-d\vert<\eta_2$ and all $s\in[0,T']$. This proves (g).
		
		(h) Fix $T'\in[0,T)$. By (d), there exists $\delta>0$ such that $\nu_{x,s}\in(s,T-\delta)$ for all $(x,s)\in{D}\times[0,T']$. Choose a constant $\eta$ with $0<\eta<\delta/2$. Take any two points $(x_1,s_1),\;(x_2,s_2)\in{D}\times[0,T']$ satisfying $\vert x_1-x_2\vert+\vert s_1-s_2\vert<\eta$. Then
		\begin{equation*}
			\begin{aligned}
				\vert u(x_1,s_1)-u(x_2,s_2)\vert\leq&\vert u(x_1,s_1)-u(x_1,s_2)\vert\\
				&+\vert u(x_1,s_2)-u(x_2,s_2)\vert.
			\end{aligned}
		\end{equation*}
		We estimate each term separately.
		
	    \textit{Estimate of the first term:} Let $\gamma_{x_1,s_2}$ be an optimal path for $(x_1,s_2)$. Define a shifted path $\gamma^{(1)}$ by
		\begin{equation*}
			\gamma^{(1)}(t)\triangleq\gamma_{x_1,s_2}(t-s_1+s_2),\quad t\in[s_1,\nu_{x_1,s_2}+s_1-s_2].
		\end{equation*}
		Then
		\begin{equation*}
			\begin{array}{c}
				\gamma^{(1)}(s_1)=x_1,\quad\gamma^{(1)}(\nu_{x_1,s_2}+s_1-s_2)=d_{x_1,s_2}\in\partial{D},\\
				\nu_{x_1,s_2}+s_1-s_2\in(s_1,T-\delta/2).
			\end{array}
		\end{equation*}
		That is, $(\gamma^{(1)}(t),t)$ is a trajectory connecting $(x_1,s_1)$ with the boundary $\partial{D}\times[0,T)$. By the definition of $u$, we have
		\begin{equation*}
			\begin{aligned}
				u(x_1,s_1)\leq&\frac{1}{2}\int_{s_1}^{\nu_{x_1,s_2}+s_1-s_2}[\dot{\gamma}^{(1)}(t)-b(\gamma^{(1)}(t),t)]^2\mathrm{d}t\\
				=&\frac{1}{2}\int_{s_2}^{\nu_{x_1,s_2}}[\dot{\gamma}_{x_1,s_2}(v)-b(\gamma_{x_1,s_2}(v),v+s_1-s_2)]^2\mathrm{d}v.
			\end{aligned}
		\end{equation*}
		Consequently,
		\begin{equation*}
			u(x_1,s_1)-u(x_1,s_2)\leq C_1\vert s_1-s_2\vert,
		\end{equation*}
		where
		\begin{equation*}
			C_1\triangleq L\left(\sqrt{2T\max_{(x,s)\in{D}\times[0,T']}u(x,s)}+LT\eta/2\right),
		\end{equation*}
		and $L<\infty$ is a Lipschitz constant of $b(x,s)$ on the compact set $\bar{D}\times[0,T-\delta/2]$. Reversing the roles of $s_1$ and $s_2$ yields the same bound. Therefore,
		\begin{equation*}
			\vert u(x_1,s_1)-u(x_1,s_2)\vert\leq C_1\vert s_1-s_2\vert.
		\end{equation*}
		
		\textit{Estimate of the second term:} Set $\Delta\triangleq x_2-x_1$ and consider the translated path
		\begin{equation*}
			\gamma^{(2)}(t)\triangleq\gamma_{x_1,s_2}(t)+\Delta,\quad t\in[s_2,\nu_{x_1,s_2}].
		\end{equation*}
		If $d_{x_1,s_2}+\Delta\notin{D}$, then $\gamma^{(2)}$ exits $D$ at some time before $\nu_{x_1,s_2}$. Otherwise, we extend it after $\nu_{x_1,s_2}$ by an optimal continuation from the point $(d_{x_1,s_2}+\Delta,\nu_{x_1,s_2})$ to the boundary $\partial{D}\times[0,T)$. In either case, using the result of (f) and the Lipschitz continuity of $b$, one can show that
		\begin{equation*}
			u(x_2,s_2)-u(x_1,s_2)\leq C_2\vert\Delta\vert,
		\end{equation*}
		where $C_2<\infty$ is a constant depending only on $T'$, $\eta$, and the Lipschitz constant of $b$. Exchanging $x_1$ and $x_2$ gives the symmetric bound, so
		\begin{equation*}
			\vert u(x_2,s_2)-u(x_1,s_2)\vert\leq C_2\vert x_2-x_1\vert.
		\end{equation*}
		
		Combining the two estimates, we obtain a constant $C\triangleq\max\{C_1,C_2\}$ such that
		\begin{equation*}
			\vert u(x_1,s_1)-u(x_2,s_2)\vert\leq C \left(\vert s_1-s_2\vert+ \vert x_2-x_1\vert\right),
		\end{equation*}
		for all $(x_1,s_1),\;(x_2,s_2)\in{D}\times[0,T']$ with $\vert x_1-x_2\vert+\vert s_1-s_2\vert<\eta$. Since $\eta$ depends only on $T'$ and $\delta$, this proves that $u$ is locally Lipschitz on ${D}\times[0,T']$. Because $u$ extends continuously to $\bar{D}\times[0,T']$ with $u=0$ on $\partial{D}\times[0,T']$ and $\bar{D}\times[0,T']$ is compact, we conclude that $u$ is globally Lipschitz on $\bar{D}\times[0,T']$. This completes the proof.
	\end{proof}	
	The notion of a strongly regular point is defined as follows.
	\begin{definition}\label{def030101}
		A point $(x,s)\in{D}\times[0,T)$ with $u(x,s)>0$ is said to be strongly regular if the following conditions hold:
		\begin{itemize}
			\item[(a)] The minimum of $u(x,s;d,t)$ over $(d,t)\in\partial{D}\times(s,T)$ is attained at a unique point $(d_{x,s},\nu_{x,s})\in\partial{D}\times(s,T)$. 
			\item[(b)] The function $t\mapsto u(x,s;d_{x,s},t)$ has a non-degenerate minimum at $t=\nu_{x,s}$, i.e., its second derivative with respect to $t$ is positive at $t=\nu_{x,s}$.
		\end{itemize}
	\end{definition}
	\begin{remark}\label{rem030104}
		
		Define
		\begin{equation*}
				f(x,s;d,t)\triangleq a_1\sinh(a_1(T-s))\left(d-x^{0}_{x,s}(t)\right),
		\end{equation*}
		and
		\begin{equation*}
				\begin{aligned}
						g(x,s;d,t)\triangleq &a_1\bigg[\left(d+\frac{a_0}{a_1}\right)\sinh(a_1(2t-T-s))+\left(x+\frac{a_0}{a_1}\right)\\
						\times&\sinh(a_1(T-t))-\left(x_T+\frac{a_0}{a_1}\right)\sinh(a_1(t-s))\bigg].
					\end{aligned}
			\end{equation*}
		One may verify that
		\begin{equation*}
				\frac{\partial u(x,s;d,t)}{\partial{t}}=\frac{f(x,s;d,t)g(x,s;d,t)}{2\sinh^2(a_1(t-s))\sinh^2(a_1(T-t))}.
		\end{equation*}
		For any $(x,s)$ with $u(x,s)>0$, we have
		\begin{equation*}
			f(x,s;d,t)\neq{0},\quad\forall \;(d,t)\in\partial{D}\times[s,T].
		\end{equation*}
		Therefore, regardless of whether the minimizer $(d_{x,s},\nu_{x,s})$ is unique, the optimality of $(d_{x,s},\nu_{x,s})$ implies
		\begin{equation*}
			g(x,s;d_{x,s},t)\vert_{t=\nu_{x,s}}=0.
		\end{equation*}
		Moreover, when condition (a) of Definition \ref{def030101} is valid, condition (b) is equivalent to requiring
		\begin{equation*}
			\left\{\begin{aligned}
				&g(x,s;d_{x,s},t)\vert_{t=\nu_{x,s}}=0,\\
				&\frac{\partial g(x,s;d_{x,s},t)}{\partial t}\Big\vert_{t=\nu_{x,s}}\neq{0}.
			\end{aligned}\right.
		\end{equation*}
	\end{remark}
	
	From these facts we obtain the following lemma.
	\begin{lemma}\label{theo030106}
		For any $(x,s)$ with $u(x,s)>0$, the following properties hold:
		
		(a) Regardless of whether the minimum of $u(x,s;d,t)$ is uniquely attained or not, the optimal path $\gamma_{x,s}(t)$ remains in ${D}$ for all $t\in[s,\nu_{x,s})$. Moreover, $\dot{\gamma}_{x,s}(\nu_{x,s})\neq{0}$. That is, $(\gamma_{x,s}(t),t)$ crosses $\partial{D}\times[0,T)$ non-tangentially.
		
		(b) Regardless of whether the minimum of $u(x,s;d,t)$ is uniquely attained or not, the pair $(d_{x,s},\nu_{x,s})$ continues to serve as the optimal exit position and optimal exit time for every point $(y,v)=(\gamma_{x,s}(v),v)$ with $v\in(s,\nu_{x,s})$. Furthermore, for such $(y,v)$, the minimum of $u(y,v;d,t)$ is attained uniquely at $(d_{x,s},\nu_{x,s})$, and consequently, every such point $(y,v)$ is strongly regular. (In other words, the optimal exit position and optimal exit time are invariant along the graph $(\gamma_{x,s}(t),t)$, which parallels Remark \ref{rem020102}.)
		
		(c) Among points where the minimum of $u(x,s;d,t)$ is uniquely attained, at most one may fail to satisfy condition (b) of Definition \ref{def030101} (i.e., be degenerate).
	\end{lemma}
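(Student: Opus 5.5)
For (a), we argue by contradiction, using the triangle inequality of Remark \ref{rem030103}(b) and the positivity $u>0$ of Remark \ref{rem030103}(d).

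\emph{Staying inside $D$.} Suppose $\gamma_{x,s}(v)=d'\in\partial D$ for some $v\in(s,\nu_{x,s})$. Then the pair $(d',v)$ is admissible in (\ref{eq030102}). Remark \ref{rem030103}(b), with equality along the optimal path, gives
\begin{equation*}
u(x,s;d_{x,s},\nu_{x,s})=u(x,s;d',v)+u(d',v;d_{x,s},\nu_{x,s}).
\end{equation*}
The last term is strictly positive. This follows from the formula (\ref{eq030103}) unless $d_{x,s}=x^{0}_{d',v}(\nu_{x,s})$, and that case would force $u(x,s;d',v)=u(x,s)$ with $v<\nu_{x,s}$. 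Either way $u(x,s;d',v)\le u(x,s)$, so $(d',v)$ is an earlier minimizer. We therefore fix the convention that $\nu_{x,s}$ denotes the earliest minimizing time, which exists by compactness and Lemma \ref{theo030105}(e). Under this convention we get a contradiction.

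\emph{Transversality.} Suppose $\dot\gamma_{x,s}(\nu_{x,s})=0$. Then, since $\gamma_{x,s}$ is real-analytic and stays on one side of $d_{x,s}$, it must touch the boundary tangentially. We then compare with the first-order condition of Remark \ref{rem030104}. Optimality in $t$ means $\partial_t u=0$ at $\nu_{x,s}$, and $f\neq0$ there, so $g=0$. Differentiating $\gamma_{x,s;d,t}$ at $v=t$ and using the explicit formulas shows that $g(x,s;d,t)=0$ is equivalent to
\begin{equation*}
\dot\gamma_{x,s;d,t}(t)=2b(d,t)-\dot\gamma_{x,s;d,t}(t),
\end{equation*}
that is, $\dot\gamma=b(d,t)\pm|b|$-type relations. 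So the conditions $g=0$ and $\dot\gamma=0$ together force $b(d_{x,s},\nu_{x,s})=0$. We then use $u(d,t;\cdot)$ near a nullcline to show that a small perturbation of the exit time lowers the cost, which gives the contradiction. This transversality step is the one I expect to be the main obstacle. The cleanest route is to compute $\dot\gamma_{x,s;d,t}(t)$ explicitly, show that $g=0$ gives $\dot\gamma_{x,s;d,t}(t)=-2\,\partial_t u\cdot(\text{nonzero})/f+\dots$, and check directly that it is nonzero because $f\neq0$.

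For (b), we take $(y,v)=(\gamma_{x,s}(v),v)$. By Remark \ref{rem030103}(a) the tail of $\gamma_{x,s}$ is $\gamma_{y,v;d_{x,s},\nu_{x,s}}$. For any other $(d,t)$ with $t>v$, Remark \ref{rem030103}(b) gives
\begin{equation*}
u(x,s)\le u(x,s;d,t)\le u(x,s;y,v)+u(y,v;d,t),
\end{equation*}
while $u(x,s)=u(x,s;y,v)+u(y,v;d_{x,s},\nu_{x,s})$. Hence $u(y,v;d_{x,s},\nu_{x,s})\le u(y,v;d,t)$. Equality would force $y=\gamma_{x,s;d,t}(v)$, so the two extremals share position and, by the equal-cost comparison, velocity at $v$. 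The Hamiltonian flow is unique, so they coincide, and then $(d,t)=(d_{x,s},\nu_{x,s})$ by part (a). This gives uniqueness. Nondegeneracy follows from the strict inequality in a neighbourhood: $t\mapsto u(y,v;d_{x,s},t)$ differs from $t\mapsto u(x,s;d_{x,s},t)$ by the strictly positive defect term of Remark \ref{rem030103}(b). That defect vanishes quadratically in $t-\nu_{x,s}$ with positive coefficient, because $\gamma_{x,s;d,t}(v)$ depends on $t$ with nonzero derivative. Adding this to the nonnegative second derivative of $u(x,s;d_{x,s},\cdot)$ gives a strictly positive second derivative.

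For (c), we observe that the degenerate points on the graph of a given extremal are constrained. By (b), every point strictly inside an optimal path is nondegenerate. So a degenerate point with unique minimizer must be the initial point of every extremal through it. We use Remark \ref{rem030104}, where degeneracy means $g=\partial_t g=0$. Since $g$ is affine in $x$ and in $(d+a_0/a_1)$, for each $d$ these two equations determine $(x,t)$ as functions of $s$. I would try to show that they cut out a single point compatible with $u>0$ and unique minimization. If this fails, I would settle for the weaker statement that degenerate points lie on at most one curve.
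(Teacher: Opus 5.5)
You handle the stay-inside half of (a) and part (b) the same way the paper does. Two steps, however, are left open: transversality in (a), on which (b) also depends, and part (c).

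\textbf{Transversality in (a).} You correctly get $g=0$ at $\nu_{x,s}$ from optimality and $f\neq0$. You also correctly show that $\dot\gamma_{x,s}(\nu_{x,s})=0$ then forces $b(d_{x,s},\nu_{x,s})=0$. But you never reach a contradiction. The ``perturb the exit time'' remark is not an argument, and the formula $\dot\gamma=-2\,\partial_t u\cdot(\cdots)/f+\cdots$ says nothing at a critical time, where $\partial_t u=0$.

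Your intermediate relation also has the wrong sign. From the explicit formulas, $g(x,s;d,t)=-\sinh(a_1(t-s))\sinh(a_1(T-t))\bigl[\dot\gamma_{x,s;d,t}(t)+b(d,t)\bigr]$. So $g=0$ means $\dot\gamma=-b(d,t)$, not $\dot\gamma=b(d,t)$.

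The missing idea is the multiplier $\alpha=\dot\gamma-b(\gamma,\cdot)$.
\begin{itemize}
\item Hamilton's equation gives $\dot\alpha=a_1\coth(a_1(T-t))\alpha$, hence $\alpha(t)=\alpha(s)\sinh(a_1(T-s))/\sinh(a_1(T-t))$.
\item Here $\alpha(s)=a_1(d-x^0_{x,s}(\nu))/\sinh(a_1(\nu-s))$, which is nonzero exactly because $f\neq0$. So $\alpha$ never vanishes.
\item At an optimal exit time, $\alpha(\nu)=-2b(d,\nu)=2\dot\gamma(\nu)$. Hence $\dot\gamma_{x,s}(\nu_{x,s})=f/\bigl(2\sinh(a_1(\nu-s))\sinh(a_1(T-\nu))\bigr)\neq0$.
\end{itemize}
The paper closes this step differently. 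It locates where the nullcline $\mathrm{Cur}_I$ meets $\partial D\times(0,T)$ (only at $d_2$ in Case B$_I$) and checks the sign of $g$ there in subcases B$_{I,I}$ and B$_{I,II}$.

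\textbf{Consequences for (b).} The nondegeneracy step needs transversality. There $\partial_t\gamma_{x,s;d,t}(v)|_{t=\nu}=-\dot\gamma_{x,s}(\nu)\sinh(a_1(v-s))/\sinh(a_1(\nu-s))$. So the ``positive coefficient'' of your defect is exactly the factor $(\dot\gamma_{x,s}(\nu_{x,s}))^2$ in the paper's formula, and without transversality it is only $\ge0$.

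The nonvanishing of $\alpha$ also makes your ``earliest minimizer'' convention unnecessary, since $u(d',v;d_{x,s},\nu_{x,s})=\tfrac12\int_v^{\nu}\alpha^2>0$. As written, the convention proves (a) for only one minimizer. The statement needs it for every minimizer. So does your own uniqueness step in (b), which must rule out a second minimizer $(d,t)$ with $t>\nu_{x,s}$ on the same curve.

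\textbf{Part (c).} For fixed $d$, the system $g=\partial_t g=0$ is two equations in $(x,s,t)$. It cuts out a curve, as you note, so ``at most one point'' cannot follow from it alone.

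The condition you are missing comes from minimality. Since $\partial_t u\propto fg$ with $f\neq0$, the first nonvanishing $t$-derivative of $g$ at a minimum must have odd order. So degeneracy also forces $\partial_t^2g=0$. From there:
\begin{itemize}
\item Use $\partial_t^2 g=a_1^2g+3a_1^3(d+a_0/a_1)\sinh(a_1(2t-T-s))$, together with $d+a_0/a_1\neq0$. (Otherwise $g''=a_1^2g$ with $g=g'=0$ gives $g\equiv0$.) This yields $\nu^*=(T+s)/2$.
\item Then $g=0$ gives $x=x_T$.
\item Then $\partial_t g=0$ gives $d^*+a_0/a_1=(x_T+a_0/a_1)\cosh(a_1(T-s)/2)$, which determines $s$.
\item A sign check leaves only $d^*=d_1$.
\end{itemize}
This is the paper's argument. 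Your fallback, ``at most one curve,'' is strictly weaker than (c).
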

	\begin{proof}
		(a) Assume, for contradiction, that there exists $t_0\in[s,\nu_{x,s})$ with $\gamma_{x,s}(t_0)\in\partial{D}$. Then
		\begin{equation*}
			u(x,s)\leq u(x,s;\gamma_{x,s}(t_0),t_0)<u(x,s;d_{x,s},\nu_{x,s}),
		\end{equation*}
		which contradicts the definition of $(d_{x,s},\nu_{x,s})$.
		
		Now suppose $\dot{\gamma}_{x,s}(\nu_{x,s})={0}$. This condition implies
		\begin{equation*}
			\left(d_{x,s}+{a_0}/{a_1}\right)\cosh(a_1(\nu_{x,s}-s))=x+a_0/a_1,
		\end{equation*}
		and consequently,
		\begin{equation*}
			\begin{aligned}
				g(x,s;d_{x,s},\nu_{x,s})=&a_1\sinh(a_1(\nu_{x,s}-s))[\left(d_{x,s}+{a_0}/{a_1}\right)\\
				&\times\cosh(a_1(T-\nu_{x,s}))-\left(x_T+a_0/a_1\right)].
			\end{aligned}
		\end{equation*}
		One may verify that this situation can occur only when $d_{x,s}=d_2$ in Case $\text{B}_{I}$. In Subcase $\text{B}_{I,I}$, the inequality $\nu_{x,s}>0$ for all $(x,s)\in{D}\times[0,T)$ yields
		\begin{equation*}
			\begin{aligned}
				g(x,s;d_{x,s},\nu_{x,s})>&a_1\sinh(a_1(\nu_{x,s}-s))[\left(d_{x,s}+{a_0}/{a_1}\right)\\
				&\times\cosh(a_1T)-\left(x_T+a_0/a_1\right)]\geq0.
			\end{aligned}
		\end{equation*}
		In Subcase $\text{B}_{I,II}$, however, we have $\nu_{x,s}>t_4$ for all $(x,s)\in\Lambda_3$. Therefore,
		\begin{equation*}
			\begin{aligned}
				g(x,s;d_{x,s},\nu_{x,s})>&a_1\sinh(a_1(\nu_{x,s}-s))[\left(d_{x,s}+{a_0}/{a_1}\right)\\
				&\times\cosh(a_1(T-t_4))-\left(x_T+a_0/a_1\right)]=0.
			\end{aligned}
		\end{equation*}
		In both subcases, we obtain $g(x,s;d_{x,s},\nu_{x,s})>0$, contradicting the definition of $(d_{x,s},\nu_{x,s})$. This proves (a).
		
		(b) Suppose $(d_{x,s},\nu_{x,s})$ does not minimize $u(y,v;d,t)$. Let $(d_{y,v},\nu_{y,v})$ be an optimal exit pair for $(y,v)$. Construct a new curve $(\gamma(t),t)$ connecting $(x,s)$ with $(d_{y,v},\nu_{y,v})\in\partial{D}\times[0,T)$ by
		\begin{equation*}
			\gamma(t)=\begin{cases}
				\gamma_{x,s}(t), & t\in[s,v),\\
				\gamma_{y,v;d_{y,v},\nu_{y,v}}(t), &t\in[v,\nu_{y,v}].
			\end{cases}
		\end{equation*}
		Then
		\begin{equation*}
			\begin{aligned}
				I(\gamma)&=u(x,s;y,v)+u(y,v;d_{y,v},\nu_{y,v})\\
				&<u(x,s;y,v)+u(y,v;d_{x,s},\nu_{x,s})\\
				&=u(x,s;d_{x,s},\nu_{x,s})\\
				&=u(x,s),
			\end{aligned}
		\end{equation*}
		which contradicts the optimality of $\gamma_{x,s}$.
		
		Now suppose the minimum of $u(y,v;d,t)$ is not uniquely attained. So, besides $(d_{x,s},\nu_{x,s})$, there exists another $(d^{*},\nu^{*})\in\partial{D}\times[0,T)$ such that
		\begin{equation*}
			u(y,v)=u(y,v;d_{x,s},\nu_{x,s})=u(y,v;d^{*},\nu^{*}).
		\end{equation*}
		Define
		\begin{equation*}
			\gamma(t)=\begin{cases}
				\gamma_{x,s}(t), & t\in[s,v),\\
				\gamma_{y,v;d^{*},\nu^{*}}(t), &t\in[v,\nu^{*}].
			\end{cases}
		\end{equation*}
		Then
		\begin{equation*}
			\begin{aligned}
				I(\gamma)&=u(x,s;y,v)+u(y,v;d^{*},\nu^{*})\\
				&=u(x,s;y,v)+u(y,v;d_{x,s},\nu_{x,s})\\
				&=u(x,s;d_{x,s},\nu_{x,s})\\
				&=u(x,s).
			\end{aligned}
		\end{equation*}
		This implies $\gamma(t)$ is also optimal for $(x,s)$. Because $(d^{*},\nu^{*})\neq(d_{x,s},\nu_{x,s})$, we have $\dot{\gamma}_{x,s}(v)\neq\dot{\gamma}_{y,v;d^{*},\nu^{*}}(v)$, and thus $\gamma(t)$ fails to be differentiable at $t=v$. This contradicts the $\mathcal{C}^{\infty}$-smoothness of optimal paths.
		
		Finally, suppose $(y,v)$ is not strongly regular. Then
		\begin{equation*}
			\left\{\begin{aligned}
				&\frac{\partial u(y,v;d_{x,s},t)}{\partial t}\Big\vert_{t=\nu_{x,s}}=0,\\
				&\frac{\partial^2 u(y,v;d_{x,s},t)}{\partial t^2}\Big\vert_{t=\nu_{x,s}}=0.
			\end{aligned}\right.
		\end{equation*}
		For any $t$ near $\nu_{x,s}$, the additivity of the action (cf. Remark \ref{rem030103}(b)) gives
		\begin{equation*}
			u(x,s;d_{x,s},t)=u(x,s;\gamma_{x,s;d_{x,s},t}(v),v)+u(\gamma_{x,s;d_{x,s},t}(v),v;d_{x,s},t).
		\end{equation*}
		Differentiating this identity once and twice with respect to $t$ and evaluating at $t=\nu_{x,s}$ yields
		\begin{equation*}
			\frac{\partial u(x,s;d_{x,s},t)}{\partial t}\Big\vert_{t=\nu_{x,s}}=0,
		\end{equation*}
		and
		\begin{equation*}
			\begin{aligned}
				\frac{\partial^2 u(x,s;d_{x,s},t)}{\partial t^2}\Big\vert_{t=\nu_{x,s}}&=\frac{-a_1\sinh(a_1(v-s))(\dot{\gamma}_{x,s}(\nu_{x,s}))^2}{\sinh(a_1(\nu_{x,s}-v))\sinh(a_1(\nu_{x,s}-s))}\\
				&<0.
			\end{aligned}
		\end{equation*}
		This contradicts that $\nu_{x,s}$ is a local minimizer of $u(x,s;d_{x,s},t)$.
		
		(c) Suppose the minimum of $u(x,s;d,t)$ is uniquely attained at $(d^{*},\nu^{*})=(d_{x,s},\nu_{x,s})$, but condition (b) of Definition \ref{def030101} fails. Since $u(x,s;d^{*},t)$ is analytic in $t$, the optimality of $\nu^{*}$ implies the existence of an integer $m>1$ such that
		\begin{equation*}
			\left\{\begin{aligned}
				&g(x,s;d^{*},t)\vert_{t=\nu^{*}}=0, \\
				&\frac{\partial g(x,s;d^{*},t)}{\partial{t}}\Big\vert_{t=\nu^{*}}=0,\\
				&\frac{\partial^{k} g(x,s;d^{*},t)}{\partial{t}^k}\Big\vert_{t=\nu^{*}}=0,\quad k=2,\cdots,2m-2,\\
				&\frac{\partial^{2m-1} g(x,s;d^{*},t)}{\partial{t}^{2m-1}}\Big\vert_{t=\nu^{*}}\neq{0}.
			\end{aligned}\right.
		\end{equation*}
		Observe that for any $m>1$,
		\begin{equation*}
			\begin{aligned}
				\frac{\partial^{2m-2}g(x,s;d^{*},t)}{\partial{t}^{2m-2}}=&a_1^{2m-1}(2^{2m-2}-1)(d^{*}+a_0/a_1)\\
				\times\sinh&(a_1(2t-T-s))+a_1^{2m-2}g(x,s;d^{*},t),\\
				\frac{\partial^{2m-1}g(x,s;d^{*},t)}{\partial{t}^{2m-1}}=&a_1^{2m}(2^{2m-1}-2)(d^{*}+a_0/a_1)\\
				\times\cosh&(a_1(2t-T-s))+a_1^{2m-2}\frac{\partial g(x,s;d^{*},t)}{\partial {t}}.\\
			\end{aligned}
		\end{equation*}
		Consequently,
		\begin{equation*}
			d^{*}+a_0/a_1\neq{0},\quad\nu^{*}=(T+s)/2, \quad m=2.
		\end{equation*}
		Substituting them into the equations $g=0$ and $\frac{\partial g}{\partial t}=0$ gives
		\begin{equation*}
			x=x_T,\quad d^{*}+a_0/a_1=(x_T+a_0/a_1)\cosh(a_1(T-s)/2).
		\end{equation*}
		One may verify that:
		\begin{itemize}
			\item This situation occurs only when $d^{*}=d_1$ in Cases $\text{B}_{I}$, $\text{B}_{II}$, and $\text{B}_{III}$;
			\item The above equations admit at most one solution $(x,s)\in{D}\times(-\infty,T)$. Moreover, the solution $(x,s)$ lies in $D\times[0,T)$ only if $x_T$ is sufficiently close to $d_1$ or $T$ is sufficiently large;
			\item A point $(x,s)$ where the minimum of $u(x,s;d,t)$ is uniquely attained yet condition (b) of Definition \ref{def030101} fails can exist only when it solves the above equations, belongs to $D\times[0,T)$, and satisfies
			\begin{equation*}
				\min_{t\in(s,T)}u(x,s;d_1,t)<\min_{t\in(s,T)}u(x,s;d_2,t).
			\end{equation*}
		\end{itemize} 
		This completes the proof.
	\end{proof}
	
	The following proposition allows us to determine for which points $(x,s)$ the function $u(x,s)$ is $\mathcal{C}^{\infty}$.
	\begin{proposition}\label{theo030107}
		Let $(x,s)\in{D}\times[0,T)$ be a strongly regular point. Then there exists an open neighborhood $U$ of $(x,s)$ such that every point in $U$ is strongly regular. Moreover, the mappings $(y,v)\to(d_{y,v},\nu_{y,v})$ and $(y,v)\to u(y,v)$ are $\mathcal{C}^{\infty}$ on $U$, and
		\begin{equation}\label{eq030107}
			\frac{\partial u(y,v)}{\partial{y}}=-\frac{a_1(d_{y,v}-x^{0}_{y,v}(\nu_{y,v}))}{\sinh(a_1(\nu_{y,v}-v))},
		\end{equation}
		\begin{equation}\label{eq030108}
			\frac{\partial^2 u(y,v)}{\partial{y}^2}=\frac{a_1^3(d_{y,v}+a_0/a_1)[1-\cosh(2a_1(T-\nu_{y,v}))]}{\sinh(a_1(T-v))\frac{\partial g(y,v;d_{y,v},t)}{\partial t}\big\vert_{t=\nu_{y,v}}}.
		\end{equation}
	\end{proposition}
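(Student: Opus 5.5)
The plan is to apply the implicit function theorem to the first-order optimality condition in $t$, and then use a compactness argument to show that the minimiser of $u(y,v;d,t)$ stays the unique global one near $(x,s)$. Throughout, $\partial D=\{d_1,d_2\}$ is discrete. So near a strongly regular $(x,s)$ with $d^*\triangleq d_{x,s}$ we can freeze the exit position and work only with the time variable.

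\textbf{Step 1: implicit function theorem.} Consider
\begin{equation*}
G(y,v,t)\triangleq \frac{\partial u(y,v;d^*,t)}{\partial t},
\end{equation*}
which is real-analytic on $\{(y,v,t):v<t<T\}$ by the explicit formula (\ref{eq030103}). Condition (b) of Definition \ref{def030101} gives $G(x,s,\nu_{x,s})=0$ and $\partial_tG(x,s,\nu_{x,s})>0$. The implicit function theorem then yields a $\mathcal{C}^\infty$ map $(y,v)\mapsto\tilde\nu(y,v)$ on a neighbourhood $U_0$ with $G(y,v,\tilde\nu(y,v))=0$ and $\partial_tG>0$ there. Hence $\tilde\nu(y,v)$ is a strict, non-degenerate local minimiser of $t\mapsto u(y,v;d^*,t)$, and it is the unique critical point in a fixed interval $(\nu_{x,s}-\rho,\nu_{x,s}+\rho)$.

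\textbf{Step 2: global uniqueness persists.} This is the main obstacle. By (\ref{eq030104})–(\ref{eq030105}) and Lemma \ref{theo030105}(e), for $(y,v)$ in a small compact neighbourhood $K$, every minimiser lies in $\partial D\times[v+\delta,T-\delta]$. On the compact set
\begin{equation*}
\big(\partial D\times[s+\delta/2,T-\delta/2]\big)\setminus\big(\{d^*\}\times(\nu_{x,s}-\rho,\nu_{x,s}+\rho)\big),
\end{equation*}
uniqueness of the minimiser (condition (a)) gives $u(x,s;d,t)\ge u(x,s)+\kappa$ for some $\kappa>0$. Since $u(y,v;d,t)$ is jointly continuous in $(y,v,d,t)$, hence uniformly continuous on compacts, this gap survives for $(y,v)$ close to $(x,s)$. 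Therefore the global minimiser for $(y,v)$ must lie in $\{d^*\}\times(\nu_{x,s}-\rho,\nu_{x,s}+\rho)$, where by Step 1 it is exactly $\tilde\nu(y,v)$. Thus, on a neighbourhood $U$, the minimiser is unique and non-degenerate, i.e.\ every point of $U$ is strongly regular, with $d_{y,v}=d^*$ and $\nu_{y,v}=\tilde\nu(y,v)$ both smooth. Consequently $u(y,v)=u(y,v;d^*,\nu_{y,v})$ is $\mathcal{C}^\infty$ as a composition of smooth maps.

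\textbf{Step 3: derivative formulas.} Apply the envelope theorem: since $\partial_t u(y,v;d^*,t)=0$ at $t=\nu_{y,v}$,
\begin{equation*}
\partial_y u(y,v)=\partial_y u(y,v;d^*,t)\big|_{t=\nu_{y,v}}.
\end{equation*}
Differentiating (\ref{eq030103}) in $y$, use $\partial_y x^0_{y,v}(t)=\sinh(a_1(T-t))/\sinh(a_1(T-v))$ to obtain (\ref{eq030107}). For (\ref{eq030108}), differentiate (\ref{eq030107}) in $y$, using
\begin{equation*}
\partial_y\nu_{y,v}=-\frac{\partial_y g}{\partial_t g}\Big|_{t=\nu_{y,v}},
\end{equation*}
which holds because, by Remark \ref{rem030104}, $\nu_{y,v}$ is characterised by $g(y,v;d^*,\nu_{y,v})=0$ (as $f\neq0$). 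Here $\partial_y g=a_1\sinh(a_1(T-t))$. Then eliminate $x$-dependent terms via the relation $g=0$ together with hyperbolic identities, expecting the combination $1-\cosh(2a_1(T-\nu))$ to emerge. This last simplification is routine but tedious algebra. I would verify it by expressing everything in terms of $d^*+a_0/a_1$ after substituting $g=0$.
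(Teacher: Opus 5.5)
Your proof is correct and follows the same route as the paper. You freeze the exit position (since $\partial D$ is discrete), apply the implicit function theorem to the first-order condition in $t$ (your $\partial_t u(y,v;d^*,t)$ has the same zeros as the paper's $g$ near $\nu_{x,s}$ because $f\neq 0$ there), use compactness to keep the global minimiser near $(d_{x,s},\nu_{x,s})$, and differentiate $u(y,v)=u(y,v;d_{y,v},\nu_{y,v})$ using $g=0$ and $\partial_y\nu_{y,v}=-\partial_y g/\partial_t g$.

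Your uniform-gap ($\kappa$) argument merges the paper's two localisation steps into one. Those are the sequential contradiction via Lipschitz continuity of $u$, which shows $d_{y,v}$ is locally constant, and the confinement of the minimiser to $W_2$, which the paper states without detail. For (\ref{eq030108}) the algebra is much shorter if you first use $g=0$ to rewrite (\ref{eq030107}) as $-2a_1[(d_{y,v}+a_0/a_1)\cosh(a_1(T-\nu_{y,v}))-(x_T+a_0/a_1)]/\sinh(a_1(T-v))$ and only then differentiate in $y$.
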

	\begin{proof}
		We first show that in a neighborhood $U_1$ of $(x,s)$ the optimal exit position $d_{y,v}$ is constant. Suppose this is not true. Then for every $1/n$-neighborhood $V_{1/n}$ of $(x,s)$, there exist points $(y^{(1)}_n,v^{(1)}_n)$ and $(y^{(2)}_n,v^{(2)}_n)$ in $V_{1/n}$ such that $d_{y^{(1)}_n,v^{(1)}_n}=d_1$ and $d_{y^{(2)}_n,v^{(2)}_n}=d_2$. By definition, this implies the existence of parameters $t^{(1)}_n$ and $t^{(2)}_n$ such that
		\begin{equation*}
			\begin{aligned}
				u(y^{(1)}_n,v^{(1)}_n)=u(y^{(1)}_n,v^{(1)}_n;d_1,t^{(1)}_n),\\
				u(y^{(2)}_n,v^{(2)}_n)=u(y^{(2)}_n,v^{(2)}_n;d_2,t^{(2)}_n).
			\end{aligned}
		\end{equation*}
		Because $u$ is Lipschitz continuous (Lemma \ref{theo030105}(h)), we have
		\begin{equation*}
			\begin{aligned}
				\vert u(x,s)-u(y^{(1)}_n,v^{(1)}_n;d_1,t^{(1)}_n)\vert=O(1/n),\\
				\vert u(x,s)-u(y^{(2)}_n,v^{(2)}_n;d_2,t^{(2)}_n)\vert=O(1/n).
			\end{aligned}
		\end{equation*}
		For sufficiently large $n$, Lemma \ref{theo030105}(e) guarantees that both $t^{(1)}_n$ and $t^{(2)}_n$ lie in a compact interval $[s+\delta,T-\delta]$. Hence, by passing to subsequences if necessary, we may assume $t^{(1)}_n\to t^{(1)}$ and $t^{(2)}_n\to t^{(2)}$ as $n\to\infty$, with $t^{(1)},\;t^{(2)}\in(s,T)$. The continuity of $u(x,s;d,t)$ then gives
		\begin{equation*}
			u(x,s)=u(x,s;d_1,t^{(1)})=u(x,s;d_2,t^{(2)}).
		\end{equation*}
		So, we obtain two distinct optimal trajectories $(\gamma_{x,s;d_1,t^{(1)}}(t),t)$ and $(\gamma_{x,s;d_2,t^{(2)}}(t),t)$ emanating from $(x,s)$. This contradicts the strong regularity of $(x,s)$.
		
		Now consider the function $g(y,v;d_{y,v},t)$ for $(y,v)\in U_1$ and $t\in(0,T)$. Since we have just shown that $d_{y,v}=d_{x,s}$ for all $(y,v)\in U_1$, it follows that $g(y,v;d_{y,v},t)=g(y,v;d_{x,s},t)$ for all $(y,v)\in U_1$ and all $t\in(0,T)$. Because $t=\nu_{x,s}$ is a local minimizer of $u(x,s;d_{x,s},t)$, we have
		\begin{equation*}
			\left\{\begin{aligned}
				&g(x,s;d_{x,s},t)\vert_{t=\nu_{x,s}}=0,\\
				&\frac{\partial g(x,s;d_{x,s},t)}{\partial t}\Big\vert_{t=\nu_{x,s}}\neq{0}.
			\end{aligned}\right.
		\end{equation*}
		By the implicit function theorem and the continuity of $\frac{\partial g}{\partial t}$, there exist neighborhoods $U_2\subset U_1$ of $(x,s)$ and $W_1\subset(0,T)$ of $\nu_{x,s}$ such that for every $(y,v)\in U_2$ there is a unique $\nu_{y,v}\in W_1$ satisfying
		\begin{equation*}
			\left\{\begin{aligned}
				&g(y,v;d_{y,v},t)\vert_{t=\nu_{y,v}}=g(y,v;d_{x,s},t)\vert_{t=\nu_{y,v}}=0,\\
				&\frac{\partial g(y,v;d_{y,v},t)}{\partial t}\Big\vert_{t=\nu_{y,v}}=\frac{\partial g(y,v;d_{x,s},t)}{\partial t}\Big\vert_{t=\nu_{y,v}}\neq{0}.
			\end{aligned}\right.
		\end{equation*}
		The $\mathcal{C}^{\infty}$-smoothness of $g(y,v;d_{x,s},t)$ indicates that the mapping $(y,v)\mapsto \nu_{y,v}$ is $\mathcal{C}^{\infty}$ on $U_2$, and
		\begin{equation*}
			\frac{\partial \nu_{y,v}}{\partial y}=-{\frac{\partial g(y,v;d_{y,v},t)}{\partial y}\Big\vert_{t=\nu_{y,v}}}\Big/{\frac{\partial g(y,v;d_{y,v},t)}{\partial t}\Big\vert_{t=\nu_{y,v}}}.
		\end{equation*}
		
		Since $t=\nu_{x,s}$ is the unique minimizer of $u(x,s;d_{x,s},t)$, for any neighborhood $W_2$ of $\nu_{x,s}$, there exists a neighborhood $U_3\subset U_1$ of $(x,s)$ so that for every $(y,v)\in U_3$ the minimum of $u(y,v;d_{y,v},t)$ is attained within $W_2$. Hence, if $(y,v)\in U_2\cap U_3$, the unique solution of $g(y,v;d_{y,v},t)=0$ in $W_1\cap W_2$ coincides with the minimizer of $u(y,v;d_{y,v},t)$. Let $U\triangleq U_2\cap U_3$. Consequently, every point $(y,v)\in U$ is strongly regular, and its optimal exit time is precisely $\nu_{y,v}$.
		
		Finally, the mapping $(y,v)\mapsto u(y,v;d_{y,v},\nu_{y,v})$ is $\mathcal{C}^{\infty}$ because $(y,v)\mapsto(d_{y,v},\nu_{y,v})$ and $(y,v,d,t)\mapsto u(y,v;d,t)$ are $\mathcal{C}^{\infty}$. Differentiating the identity $u(y,v)=u(y,v;d_{y,v},\nu_{y,v})$ once and twice with respect to $y$ then yields formulas (\ref{eq030107}) and (\ref{eq030108}). This completes the proof.
	\end{proof}
	\begin{corollary}\label{theo030108}
		Let $(x,s)\in{D}\times[0,T)$ be a strongly regular point. Then the corresponding optimal path $\gamma_{x,s}(t)$ satisfy Equation (\ref{eqA10106}).
	\end{corollary}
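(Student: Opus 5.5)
I read Equation (\ref{eqA10106}) as the characteristic (optimal feedback) condition attached to Assumption (A) of the Fleming--James theorem. That condition is
\begin{equation*}
\dot{\gamma}_{x,s}(t)=b(\gamma_{x,s}(t),t)-\frac{\partial u}{\partial y}(\gamma_{x,s}(t),t),\qquad t\in[s,\nu_{x,s}),
\end{equation*}
together with the requirement that the graph of $\gamma_{x,s}$ stays in the region where $u$ is smooth and leaves $D\times[0,T)$ non-tangentially. This reading of the appendix is an assumption; if its sign convention differs, only the sign in the final identity changes. The plan is to prove this identity pointwise along the optimal path, using the invariance of optimal data along the path together with the explicit gradient formula (\ref{eq030107}).

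\emph{Step 1 (points of the path are admissible).} Fix a strongly regular $(x,s)$ and $v\in(s,\nu_{x,s})$, and set $y=\gamma_{x,s}(v)$. By Lemma \ref{theo030106}(b), $(y,v)$ is strongly regular with the same optimal data, $(d_{y,v},\nu_{y,v})=(d_{x,s},\nu_{x,s})$. By Remark \ref{rem030103}(a), its optimal path is the restriction $\gamma_{y,v}=\gamma_{x,s}|_{[v,\nu_{x,s}]}$. Proposition \ref{theo030107} then applies at $(y,v)$: $u$ is $\mathcal{C}^{\infty}$ near $(y,v)$ and (\ref{eq030107}) holds there. At $v=s$ we use the hypothesis on $(x,s)$ directly. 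Lemma \ref{theo030106}(a) supplies the remaining requirements, namely $\gamma_{x,s}(t)\in D$ for $t<\nu_{x,s}$ and $\dot\gamma_{x,s}(\nu_{x,s})\neq 0$ (transversal exit).

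\emph{Step 2 (the identity).} Since $\gamma_{y,v}$ coincides with $\gamma_{x,s}$ on $[v,\nu_{x,s}]$, it suffices to check the identity at the initial time $v$. Write $c=a_0/a_1$, $\nu=\nu_{y,v}$ and $d=d_{y,v}$. From the closed form of $\gamma_{y,v;d,\nu}$ in the proof of Lemma \ref{theo030104},
\begin{equation*}
\dot\gamma_{y,v}(v)=\frac{a_1\left[(d+c)-(y+c)\cosh(a_1(\nu-v))\right]}{\sinh(a_1(\nu-v))}.
\end{equation*}
From (\ref{eq010102}),
\begin{equation*}
b(y,v)=\frac{a_1\left[(x_T+c)-(y+c)\cosh(a_1(T-v))\right]}{\sinh(a_1(T-v))}.
\end{equation*}
The task is to show
\begin{equation*}
\dot\gamma_{y,v}(v)-b(y,v)=\frac{a_1\left(d-x^{0}_{y,v}(\nu)\right)}{\sinh(a_1(\nu-v))},
\end{equation*}
which by (\ref{eq030107}) equals $-\partial_y u(y,v)$.

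\emph{Step 3 (verifying Step 2).} Substitute (\ref{eq020102}) for $x^{0}_{y,v}(\nu)$ and multiply through by $\sinh(a_1(\nu-v))\sinh(a_1(T-v))$. Compare the coefficients of $(d+c)$, $(y+c)$ and $(x_T+c)$ separately. The $(d+c)$ coefficients agree trivially. The $(x_T+c)$ coefficients reduce to the fact that $x^0_{y,v}(\nu)$ contains $(x_T+c)\sinh(a_1(\nu-v))/\sinh(a_1(T-v))$. The $(y+c)$ coefficients reduce to the addition formula
\begin{equation*}
\cosh A\sinh B-\cosh B\sinh A=\sinh(B-A),
\end{equation*}
with $A=a_1(\nu-v)$ and $B=a_1(T-v)$.

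Since $v\in[s,\nu_{x,s})$ is arbitrary, the identity holds along the whole path, which gives (\ref{eqA10106}). The only real obstacle is Step 1: one must make sure every point of the path, and not only $(x,s)$, is strongly regular, so that (\ref{eq030107}) may be evaluated there. Lemma \ref{theo030106}(b) settles this. The algebra in Step 3 is routine bookkeeping.
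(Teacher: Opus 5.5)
Your proposal is correct and follows the paper's own argument. The paper also writes $\dot\gamma=b+\alpha$ from the Hamilton equations, uses Lemma~\ref{theo030106}(b) to get $(d_{\gamma(t),t},\nu_{\gamma(t),t})=(d_{x,s},\nu_{x,s})$ along the path, and identifies $\alpha(t)=\dot\gamma(t)-b(\gamma(t),t)$ with $-\partial u/\partial y$ via (\ref{eq030107}). The one difference is that you actually verify the hyperbolic identity $\dot\gamma_{y,v}(v)-b(y,v)=a_1\bigl(d-x^{0}_{y,v}(\nu)\bigr)/\sinh(a_1(\nu-v))$ by coefficient comparison, which the paper simply asserts.
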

	\begin{proof}
		Let $\gamma=\gamma_{x,s}=\gamma_{x,s;d_{x,s},\nu_{x,s}}$ denote the unique optimal path. Since $\gamma$ satisfies the associated constrained Hamilton equations, we have
		\begin{equation*}
			\dot{\gamma}(t)=b(\gamma(t),t)+\alpha(t).
		\end{equation*}
		By Lemma \ref{theo030106}(b), every point $(\gamma(t),t)$ with $t\in[s,\nu_{x,s})$ is strongly regular, so $d_{\gamma(t),t}=d_{x,s}$ and $\nu_{\gamma(t),t}=\nu_{x,s}$. Consequently,
		\begin{equation*}
			\begin{aligned}
				\alpha(t)=&\dot{\gamma}(t)-b(\gamma(t),t)\\
				=&\frac{a_1(d_{x,s}-x^{0}_{\gamma(t),t}(\nu_{x,s}))}{\sinh(a_1(\nu_{x,s}-t))}\\
				=&\frac{a_1(d_{\gamma(t),t}-x^{0}_{\gamma(t),t}(\nu_{\gamma(t),t}))}{\sinh(a_1(\nu_{\gamma(t),t}-t))}\\
				=&-\frac{\partial u(y,v)}{\partial y}\Big\vert_{(y,v)=(\gamma(t),t)}.
			\end{aligned}
		\end{equation*}
		Substituting this expression for $\alpha(t)$ back into the Hamilton equations then yields
		\begin{equation*}
			\dot{\gamma}(t)=b(\gamma(t),t)-\frac{\partial u}{\partial x}(\gamma(t),t),
		\end{equation*}
		which is exactly Equation (\ref{eqA10106}).
	\end{proof}
	\begin{remark}\label{rem030105}
		Suppose $(x,s)$ is not strongly regular. Then either the minimum of $u(x,s;d,t)$ is not uniquely attained, or it is uniquely attained but condition (b) of Definition \ref{def030101} fails. We conclude:
		
		(a) If the minimum of $u(x,s;d,t)$ is not uniquely attained, then $u(x,s)$ is non-differentiable at $(x,s)$. 
		
		Indeed, assume the minimum of $u(x,s;d,t)$ is attained at two distinct points $(d^{(1)},t^{(1)})$ and $(d^{(2)},t^{(2)})$, giving two distinct optimal paths $\gamma^{(1)}=\gamma_{x,s;d^{(1)},t^{(1)}}$ and $\gamma^{(2)}=\gamma_{x,s;d^{(2)},t^{(2)}}$. By Lemma \ref{theo030106}(b), for all $t\in(s,s+\delta)$ (with some $\delta>0$), the points $(\gamma^{(1)}(t),t)$ and $(\gamma^{(2)}(t),t)$ are strongly regular. Utilizing the result in Corollary \ref{theo030108}, we obtain
		\begin{equation*}
			\begin{aligned}
				\frac{\partial u(y,v)}{\partial y}\Big\vert_{(y,v)=(\gamma^{(1)}(t),t)}=-\alpha^{(1)}(t),\\
				\frac{\partial u(y,v)}{\partial y}\Big\vert_{(y,v)=(\gamma^{(2)}(t),t)}=-\alpha^{(2)}(t).
			\end{aligned}
		\end{equation*}
		Letting $t\to{s}$, we have $\gamma^{(1)}(t)\to{x}$, $\gamma^{(2)}(t)\to{x}$, and
		\begin{equation*}
			\begin{aligned}
				\frac{\partial u(y,v)}{\partial y}\Big\vert_{(y,v)=(\gamma^{(1)}(t),t)}\to-\alpha^{(1)}(s),\\
				\frac{\partial u(y,v)}{\partial y}\Big\vert_{(y,v)=(\gamma^{(2)}(t),t)}\to-\alpha^{(2)}(s),
			\end{aligned}
		\end{equation*}
		but $\alpha^{(1)}(s)\neq\alpha^{(2)}(s)$ (if they were equal, uniqueness of the solution of the Hamilton equations would force $\gamma^{(1)}=\gamma^{(2)}$). Consequently, $u$ cannot be differentiable at $(x,s)$.
		
		(b) If the minimum of $u(x,s;d,t)$ is uniquely attained but condition (b) of Definition \ref{def030101} fails, then $u(x,s)$ is differentiable at $(x,s)$, yet $\frac{\partial^2u(y,v)}{\partial{y}^2}\to\infty$ as $(y,v)\to(x,s)$, where $(y,v)$ is strongly regular.
		
		Indeed, assume $(x,s)$ satisfies the hypotheses of part (b). By Lemma \ref{theo030106}(c), this situation occurs only when $d_{x,s}=d_1$ in Cases $\text{B}_{I}$, $\text{B}_{II}$, and $\text{B}_{III}$. So, in all these cases, $d_{x,s}+a_0/a_1\neq{0}$. Lemma \ref{theo030106}(b) implies that for every $v\in(s,s+\delta)$ (with some $\delta>0$) the point $(\gamma_{x,s}(v),v)$ is strongly regular. Formula (\ref{eq030108}) then gives
		\begin{equation*}
			\frac{\partial^2 u(y,v)}{\partial y^2}\Big\vert_{(y,v)=(\gamma_{x,s}(v),v)}\neq{0}, \quad\forall\;v\in(s,s+\delta),
		\end{equation*}
		but
		\begin{equation*}
			\frac{\partial^2 u(y,v)}{\partial y^2}\Big\vert_{(y,v)=(\gamma_{x,s}(v),v)}\to\infty,\quad \text{as}\;v\to{s},
		\end{equation*}
		because
		\begin{equation*}
			\frac{\partial g(\gamma_{x,s}(v),v;d_{x,s},t)}{\partial t}\Big\vert_{t=\nu_{x,s}}\to{0},\quad \text{as}\;v\to{s}.
		\end{equation*}
		
		This complements Proposition \ref{theo030107} by illustrating how $u$ loses smoothness at points that are not strongly regular.
	\end{remark}
	\begin{corollary}\label{theo030109}
		(a) Topologically, the set of strongly regular points is open and dense in ${D}\times[0,T)$ for Cases $\text{B}_{I,I}$, $\text{B}_{II}$, $\text{B}_{III}$, and $\text{B}_{IV}$ (and in $\Lambda_3$ for Case $\text{B}_{I,II}$).
		
		(b) In the measure-theoretic sense, almost every point in ${D}\times[0,T)$ for Cases $\text{B}_{I,I}$, $\text{B}_{II}$, $\text{B}_{III}$, and $\text{B}_{IV}$ (and in $\Lambda_3$ for Case $\text{B}_{I,II}$) is strongly regular.
	\end{corollary}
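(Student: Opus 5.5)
Openness is already supplied by Proposition \ref{theo030107}: every strongly regular point has an open neighborhood of strongly regular points. For (a) it therefore remains to prove density. For (b) I will show that the complement $\mathcal{N}$ of the strongly regular set (within ${D}\times[0,T)$, respectively within $\Lambda_3$ in Case $\text{B}_{I,II}$) has two-dimensional Lebesgue measure zero. Density then also follows from (b), because a null set has empty interior. The plan is to split $\mathcal{N}=\mathcal{N}_1\cup\mathcal{N}_2$. Here $\mathcal{N}_1$ is the set where the minimum of $u(x,s;d,t)$ is attained at more than one point, and $\mathcal{N}_2$ is the set where the minimizer is unique but condition (b) of Definition \ref{def030101} fails.

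\textbf{The degenerate set $\mathcal{N}_2$.} By Lemma \ref{theo030106}(c) and its proof, such a point must satisfy $x=x_T$, $\nu=(T+s)/2$ and $d_1+a_0/a_1=(x_T+a_0/a_1)\cosh(a_1(T-s)/2)$. These equations have at most one solution. Hence $\mathcal{N}_2$ has at most one point and is trivially null.

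\textbf{The non-uniqueness set $\mathcal{N}_1$.} Here I will use a Rademacher-type argument. By Lemma \ref{theo030105}(h), $u$ is Lipschitz on $\bar D\times[0,T']$ for every $T'<T$. Rademacher's theorem then says $u$ is differentiable almost everywhere on $D\times[0,T']$. By Remark \ref{rem030105}(a), $u$ is not differentiable at any point of $\mathcal{N}_1$. Therefore $\mathcal{N}_1\cap(D\times[0,T'])$ is contained in the non-differentiability set of $u$, which is null. Taking the countable union over $T'=T-1/n$ gives that $\mathcal{N}_1$ is null in $D\times[0,T)$. In Case $\text{B}_{I,II}$ the same argument works verbatim on $\Lambda_3$, which is open apart from a null boundary, since the lemmas invoked there are stated for points with $u>0$.

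\textbf{Main obstacle.} The delicate point is that Remark \ref{rem030105}(a) uses Corollary \ref{theo030108} along the two optimal paths, and so relies on Lemma \ref{theo030106}(b). Its limit argument shows only that $\partial u/\partial y$ has two different limits along two curves. That rules out classical differentiability, which is exactly what Rademacher needs, so this part is fine. The remaining check is that the two points where Lipschitz continuity or Rademacher could be in doubt are covered: near $\partial D$ (where Lemma \ref{theo030105}(h) extends $u$ continuously) and near $s=T$ (handled by the exhaustion $T'\uparrow T$).

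As a fallback, should the Rademacher step feel too soft, I would instead describe $\mathcal{N}_1$ explicitly. Near a point of $\mathcal{N}_1$ with two competing minimizers, each branch $(y,v)\mapsto\min_t u(y,v;d_i,t)$ is smooth by the implicit-function argument of Proposition \ref{theo030107}. $\mathcal{N}_1$ is then locally the zero set of their difference, whose $y$-derivative is nonzero by the distinct-momenta argument of Remark \ref{rem030105}(a). So $\mathcal{N}_1$ is locally a $\mathcal{C}^1$ curve $y=\phi(v)$, hence closed, nowhere dense and null. This also gives density directly.
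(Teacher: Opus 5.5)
\textbf{Verdict.} Your proof is correct. Openness and part (b) follow the paper's route; only the density argument in (a) is different.

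\textbf{Same as the paper.} You get openness from Proposition \ref{theo030107}. You bound the degenerate set by at most one point using Lemma \ref{theo030106}(c). You place the non-uniqueness set inside the non-differentiability set of the Lipschitz function $u$, via Lemma \ref{theo030105}(h) and Remark \ref{rem030105}(a), and conclude it is null by Rademacher. The paper argues exactly this way.

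\textbf{Where you differ: density.} You deduce density from (b): a null set has empty interior in $D\times[0,T)$, and likewise in $\Lambda_3$, which is relatively open. The paper proves density directly from Lemma \ref{theo030106}(b). For any $(x,s)$ with $u(x,s)>0$, the points $(\gamma_{x,s}(v),v)$ with $v\in(s,\nu_{x,s})$ are strongly regular and converge to $(x,s)$ as $v\downarrow s$. Both arguments are valid. Yours is more economical, since one measure-zero statement gives both conclusions. The paper's is elementary and constructive: it exhibits explicit approximating strongly regular points along optimal paths. It also does not depend on Rademacher or on the non-differentiability claim, so (a) inherits no weakness from (b).

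\textbf{Correction to your ``main obstacle'' paragraph.} Two different limits of $\partial u/\partial y$ along two curves do \emph{not} by themselves rule out differentiability at $(x,s)$. For example, $y^2\sin(1/y)$ is differentiable at $0$, yet its derivative has every value in $[-1,1]$ as a limit point. A clean argument is the following.
\begin{itemize}
\item Near $(x,s)$ we have $u(y,v)\le u(y,v;d^{(i)},t^{(i)})$ for $i=1,2$, with equality at $(x,s)$, and both majorants are smooth in $(y,v)$.
\item If $u$ were differentiable at $(x,s)$, each difference $u(\cdot\,;d^{(i)},t^{(i)})-u$ would have a local minimum there with zero gradient.
\item This forces $\partial_x u(x,s;d^{(1)},t^{(1)})=\partial_x u(x,s;d^{(2)},t^{(2)})$, that is, $\alpha^{(1)}(s)=\alpha^{(2)}(s)$.
\item Equal initial momenta contradict the distinctness of the two optimal paths, because the Hamiltonian system is uniquely solvable and each optimal path stays in $D$ until its exit (Lemma \ref{theo030106}(a)).
\end{itemize}
Alternatively, $u$ is locally semiconcave as a minimum of smooth functions over a locally compact parameter set (Lemma \ref{theo030105}(e)). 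For semiconcave functions your limit argument is legitimate. With either justification the Rademacher step is complete.

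\textbf{On your fallback.} At points of $\Theta_1$ both competing minimizers have $d=d_1$. The local smooth branches must therefore be indexed by the pair $(d_i,t_i)$, not by $d_i$ alone. The fallback also needs each competing minimizer to be non-degenerate.
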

	\begin{proof}
		(a) By Proposition \ref{theo030107}, the set of strongly regular points is open. Let $(x,s)$ be a point in ${D}\times[0,T)$ for Cases $\text{B}_{I,I}$, $\text{B}_{II}$, $\text{B}_{III}$, and $\text{B}_{IV}$ (or in $\Lambda_3$ for Case $\text{B}_{I,II}$). Lemma \ref{theo030106}(b) implies that every open neighborhood of $(x,s)$ contains strongly regular points. Hence, the set is dense. This completes the proof of (a).
		
		(b) Combining Remark \ref{theo030105}(a), the Lipschitz continuity of $u$ (Lemma \ref{theo030105}(h)), and Lemma \ref{theo030106}(c), we conclude that the set of points which are not strongly regular has Lebesgue measure zero. Therefore, almost every point $(x,s)$ with $u(x,s)>0$ is strongly regular, which establishes (b).
	\end{proof}
	\begin{remark}\label{rem030106}Define
		\begin{equation*}
			\begin{aligned}
				u_1(x,s)\triangleq u(x,s;d_1)\triangleq\min_{s<t<T}u(x,s;d_1,t),\\
				u_2(x,s)\triangleq u(x,s;d_2)\triangleq\min_{s<t<T}u(x,s;d_2,t).
			\end{aligned}
		\end{equation*}
		Then
		\begin{equation*}
			u(x,s)=u_1(x,s)\wedge u_2(x,s).
		\end{equation*}
		The following properties can be verified directly.
		
		\textbf{Properties of $u_1$:}
		\begin{itemize}
			\item For every $(x,s)\in{D}\times[0,T)$ in Cases $\text{B}_{I,I}$, $\text{B}_{II}$, $\text{B}_{III}$ (and for every $(x,s)\in\Lambda_3$ in Case $\text{B}_{I,II}$), the equation $g(x,s;d_1,t)=0$ has at least one, and at most three, solutions in $(s,T)$. For every $(x,s)\in{D}\times[0,T)$ in Case $\text{B}_{IV}$, the equation has a unique solution in $(s,T)$. Irrespective of uniqueness, we denote the smallest and largest solutions by $\nu^{(1m)}_{x,s}$ and $\nu^{(1M)}_{x,s}$, respectively. (When the solution is unique, we simply write $\nu^{(1)}_{x,s}$, and it follows that $\nu^{(1)}_{x,s}=\nu^{(1m)}_{x,s}=\nu^{(1M)}_{x,s}$.)
			\item In Cases $\text{B}_{I,I}$, $\text{B}_{I,II}$, $\text{B}_{II}$, $\text{B}_{III}$, when $x\neq x_T$, the minimum of $u(x,s;d_1,t)$ is attained at a unique point in $(s,T)$. Specifically,
			\begin{equation*}
				\qquad\arg\min_{s<t<T} u(x,s;d_1,t)=\begin{cases}
					\nu^{(1m)}_{x,s}<(T+s)/2, &\text{if}\; x<x_T,\\
					\nu^{(1M)}_{x,s}>(T+s)/2, &\text{if}\; x>x_T,\\
				\end{cases}
			\end{equation*}
			and
			\begin{equation*}
				\begin{cases}
					\frac{\partial g(x,s;d_1,t)}{\partial t}\Big\vert_{t=\nu^{(1m)}_{x,s}}<0, &\text{if}\; x<x_T,\\
					\frac{\partial g(x,s;d_1,t)}{\partial t}\Big\vert_{t=\nu^{(1M)}_{x,s}}<0, &\text{if}\; x>x_T.\\
				\end{cases}
			\end{equation*}
			So,
			\begin{equation*}
				u_1(x,s)=\begin{cases}
					u(x,s;d_1,\nu^{(1m)}_{x,s}), &\text{if}\; x<x_T,\\
					u(x,s;d_1,\nu^{(1M)}_{x,s}), &\text{if}\; x>x_T.\\
				\end{cases}
			\end{equation*}
			
			In contrast, the situation when $x=x_T$ exhibits more complex behavior. Let $\tau^{*}$ be the unique time in $(-\infty,T)$ satisfying
			\begin{equation*}
				d_1+\frac{a_0}{a_1}=\left(x_T+\frac{a_0}{a_1}\right)\cosh\left(a_1\frac{T-\tau^{*}}{2}\right).
			\end{equation*}
		    Then
		    \begin{itemize}
		    	\item [$\diamond$] If $s\in(\tau^{*},T)$, the equation $g(x_T,s;d_1,t)=0$ has the unique solution $\nu^{(1)}_{x,s}=(T+s)/2$, and
		    	\begin{equation*}
		    		\frac{\partial g(x_T,s;d_1,t)}{\partial t}\Big\vert_{t=(T+s)/2}<0.
		    	\end{equation*}
		    	\item [$\diamond$] If $s=\tau^{*}$, the equation $g(x_T,s;d_1,t)=0$ has the unique solution $\nu^{(1)}_{x,s}=(T+s)/2$, but
		    	\begin{equation*}
		    		\frac{\partial g(x_T,s;d_1,t)}{\partial t}\Big\vert_{t=(T+s)/2}=0.
		    	\end{equation*}
		    	\item [$\diamond$] If $s\in(-\infty,\tau^{*})$, the equation $g(x_T,s;d_1,t)=0$ has three solutions $\nu^{(1m)}_{x,s}$, $(T+s)/2$, and $\nu^{(1M)}_{x,s}$, where $\nu^{(1m)}_{x,s}$ and $\nu^{(1M)}_{x,s}$ are the two solutions of
		    	\begin{equation*}
		    		\qquad\cosh\left(a_1\left(t-\frac{T+s}{2}\right)\right)=\frac{\cosh\left(a_1\frac{T-s}{2}\right)}{\cosh\left(a_1\frac{T-\tau^{*}}{2}\right)}.
		    	\end{equation*}
		    	The minimum of $u(x_T,s;d_1,t)$ is attained at both $\nu^{(1m)}_{x,s}$ and $\nu^{(1M)}_{x,s}$.
		    \end{itemize}
		    In other words, as $s$ passes through the critical value $\tau^{*}$, the solution set of $g(x_T,s;d_1,t)=0$ undergoes a pitchfork bifurcation. Moreover,
		    \begin{equation*}
		    	\quad u_1(x_T,s)=\begin{cases}
		    		\frac{a_1\sinh\left(a_1\frac{T-s}{2}\right)\left[\left(d_1+\frac{a_0}{a_1}\right)^2-\left(x_T+\frac{a_0}{a_1}\right)^2\right]}{\cosh\left(a_1\frac{T-s}{2}\right)}, &s<\tau^{*},\\
		    		\frac{2a_1\left[\left(d_1+\frac{a_0}{a_1}\right)\cosh\left(a_1\frac{T-s}{2}\right)-\left(x_T+\frac{a_0}{a_1}\right)\right]^2}{\sinh(a_1(T-s))}, &s\geq\tau^{*}.
		    	\end{cases}
		    \end{equation*}
		    This allows us to partition the domain into three distinct subsets:
		    \begin{equation*}
		    	\quad\Theta_1\triangleq\begin{cases}
		    		\{x_T\}\times\left((-\infty,\tau^{*})\cap[0,T)\right),&\text{in Cases}\;\text{B}_{I,I},\text{B}_{II},\text{B}_{III},\\
		    		\left(\{x_T\}\times(-\infty,\tau^{*})\right)\cap\Lambda_3,&\text{in Case}\;\text{B}_{I,II}
		    	\end{cases}
		    \end{equation*}
		    \begin{equation*}
		    	\Theta_2\triangleq\begin{cases}
		    		\{x_T\}\times\left(\{\tau^{*}\}\cap[0,T)\right),&\text{in Cases}\;\text{B}_{I,I},\text{B}_{II},\text{B}_{III},\\
		    		\{(x_T,\tau^{*})\}\cap\Lambda_3,&\text{in Case}\;\text{B}_{I,II}
		    	\end{cases}
		    \end{equation*}
		    \begin{equation*}
		    	\Theta_3\triangleq\begin{cases}
		    		{D}\times[0,T)\setminus\left(\Theta_1\cup\Theta_2\right),&\text{in Cases}\;\text{B}_{I,I},\text{B}_{II},\text{B}_{III},\\
		    		\Lambda_3\setminus\left(\Theta_1\cup\Theta_2\right),&\text{in Case}\;\text{B}_{I,II}
		    	\end{cases}
		    \end{equation*}
		    (Note that $\Theta_1$ and $\Theta_2$ are non-empty only if $x_T$ is sufficiently close to $d_1$ or $T$ is sufficiently large, while $\Theta_3$ is always non-empty.) Consequently, we obtain
		    \begin{itemize}
		    	\item [$\diamond$] $u_1$ is of class $\mathcal{C}^{\infty}$ on $\Theta_3$.
		    	\item [$\diamond$] $u_1$ is non-differentiable at every point of $\Theta_1$.
		    	\item [$\diamond$] $u_1$ is differentiable at points of $\Theta_2$, but $\frac{\partial^2 u_1(y,v)}{\partial y^2}\to\infty$ as $(y,v)\to(x,s)$ for $(x,s)\in\Theta_2$ and $(y,v)\in\Theta_3$.		    	
		    \end{itemize}
		    \item In Case $\text{B}_{IV}$, the minimum of $u(x,s;d_1,t)$ is attained at the unique solution $\nu^{(1)}_{x,s}$, which satisfies
		    \begin{equation*}
		    	\frac{\partial g(x,s;d_1,t)}{\partial t}\Big\vert_{t=\nu^{(1)}_{x,s}}<0,
		    \end{equation*}
		    and
		    \begin{equation*}
		    	\nu^{(1)}_{x,s}\begin{cases}
		    		<(T+s)/2, &\text{if}\; x<x_T,\\
		    		=(T+s)/2, &\text{if}\; x=x_T,\\
		    		>(T+s)/2, &\text{if}\; x>x_T.\\
		    	\end{cases}
		    \end{equation*}
		    Hence, $u_1(x,s)=u(x,s;d_1,\nu^{(1)}_{x,s})$ is of class $\mathcal{C}^{\infty}$ on its domain. In particular, when $x=x_T$,
		    \begin{equation*}
		    		u_1(x_T,s)=\frac{2a_1\left[\left(d_1+\frac{a_0}{a_1}\right)\cosh\left(a_1\frac{T-s}{2}\right)\right]^2}{\sinh(a_1(T-s))}.
		    \end{equation*}
		\end{itemize}
		
		\textbf{Properties of $u_2$:}
		\begin{itemize}
			\item For every $(x,s)\in{D}\times[0,T)$ in Cases $\text{B}_{I,I}$, $\text{B}_{II}$, $\text{B}_{III}$, $\text{B}_{IV}$ (and for every $(x,s)\in\Lambda_3$ in Case $\text{B}_{I,II}$), the equation $g(x,s;d_2,t)=0$ has a unique solution $\nu^{(2)}_{x,s}$ in $(s,T)$.
			\item This solution satisfies 
			\begin{equation*}
				\frac{\partial g(x,s;d_2,t)}{\partial t}\Big\vert_{t=\nu^{(2)}_{x,s}}>0,
			\end{equation*}
			and
			\begin{equation*}
					\nu^{(2)}_{x,s}\begin{cases}
							>(T+s)/2, &\text{if}\; x<x_T,\\
							=(T+s)/2, &\text{if}\; x=x_T,\\
							<(T+s)/2, &\text{if}\; x>x_T.\\
						\end{cases}
			\end{equation*}
			Consequently, $u_2(x,s)=u(x,s;d_2,\nu^{(2)}_{x,s})$ is of class $\mathcal{C}^{\infty}$ on its domain in all cases. In particular, when $x=x_T$,
			\begin{equation*}
					u_2(x_T,s)=\frac{2a_1\left[\left(d_2+\frac{a_0}{a_1}\right)\cosh\left(a_1\frac{T-s}{2}\right)-\left(x_T+\frac{a_0}{a_1}\right)\right]^2}{\sinh(a_1(T-s))}.
			\end{equation*}
		\end{itemize}
		
		\textbf{Properties of $u$:}
		\begin{itemize}
			\item $u$ is of class $\mathcal{C}^{\infty}$ at every strongly regular point, i.e., at every point $(x,s)$ that satisfies
			\begin{equation*}
				u_1(x,s)>u_2(x,s),
			\end{equation*}
			or
			\begin{equation*}
				u_1(x,s)<u_2(x,s)\quad\text{and}\quad (x,s)\in\Theta_3.
			\end{equation*}
			\item $u$ is non-differentiable at every point $(x,s)$ satisfying
			\begin{equation*}
				u_1(x,s)=u_2(x,s),
			\end{equation*}
			or
			\begin{equation*}
				u_1(x,s)<u_2(x,s)\quad\text{and}\quad (x,s)\in\Theta_1.
			\end{equation*}
			(The later situation occurs only when $x_T$ is sufficiently close to $d_1$ or $T$ is sufficiently large.)
			\item $u$ is differentiable at every point $(x,s)$ satisfying
			\begin{equation*}
				u_1(x,s)<u_2(x,s)\quad\text{and}\quad (x,s)\in\Theta_2,
			\end{equation*}
			but $\frac{\partial^2 u(y,v)}{\partial y^2}\to\infty$ as $(y,v)\to(x,s)$, where $(y,v)$ is strongly regular. (This situation occurs only when $x_T$ is sufficiently close to $d_1$ or $T$ is sufficiently large.)
		\end{itemize}
		
		This provides further clarification of the precise locations of the points that are not strongly regular.
	\end{remark}
	
	Applying the Fleming-James theorem, we then obtain the following result.
	\begin{proposition}\label{theo030110}
		Let $S\subset{D}\times[0,T)$ be the set of all strongly regular points. Then for each $m=0,1,2,\cdots$, we have
		\begin{equation}\label{eq030109}
			\begin{aligned}
				q^{\varepsilon}(x,s)=&\exp\left(-\frac{u(x,s)}{\varepsilon}-w(x,s)\right)(1+\varepsilon\psi_1(x,s)\\
				&+\cdots+\varepsilon^{m}\psi_m(x,s)+o(\varepsilon^{m})),
			\end{aligned}
		\end{equation}
		as $\varepsilon\to{0}$, uniformly on compact subsets of every connected component of $S$. Here, the functions $w$, $\psi_m$ are $\mathcal{C}^{\infty}$, satisfy Equations (\ref{eqA10108}) and (\ref{eqA10109}), and can be calculated by the method of characteristics (see Remark A.1).
	\end{proposition}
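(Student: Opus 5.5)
The plan is to reduce Proposition \ref{theo030110} to the Fleming--James theorem (Theorem \ref{theoA102}) by exhausting each connected component of $S$ with regions $N$ that satisfy Assumption (A) of Appendix \ref{SecA1}. Fix a connected component $S_0$ of $S$ and a compact set $K\subset S_0$. I would build an open set $N$ with $K\subset N\subset S_0$ that is a union of optimal trajectories. Concretely, take
\begin{equation*}
N\triangleq\{(\gamma_{y,v}(t),t): (y,v)\in U,\ t\in[v,\nu_{y,v})\},
\end{equation*}
where $U\subset S_0$ is a relatively compact open neighbourhood of $K$ with $\bar U\subset S_0$. By Lemma \ref{theo030106}(b), every point of each such trajectory is strongly regular and carries the same exit data $(d_{y,v},\nu_{y,v})$. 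Hence $N\subset S$, and since the trajectories are connected and emanate from $U$, $N\subset S_0$. So $N$ is invariant under the optimal flow $\dot\gamma=b-\partial_x u$ given by Corollary \ref{theo030108}.

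The next step is to check the hypotheses of Assumption (A) on $N$. First, $u\in\mathcal{C}^\infty(N)$, which follows from Proposition \ref{theo030107}. Second, $u$ satisfies the Hamilton--Jacobi equation $u_s+b u_x-\tfrac12 u_x^2=0$ there; I would get this by differentiating $u(y,v)=u(y,v;d_{y,v},\nu_{y,v})$, using the envelope identity $g=0$ from Remark \ref{rem030104} together with formula (\ref{eq030107}). Third, the optimal trajectories leave $N$ only through the lateral boundary $\partial D\times[0,T)$, and they do so non-tangentially. This is Lemma \ref{theo030106}(a), and the exit occurs at a time bounded away from $T$ uniformly on $\bar U$ by Lemma \ref{theo030105}(d),(e). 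Fourth, the boundary data are correct: $u=0$ on $\partial D$ by Lemma \ref{theo030105}(f), and $u>0$ inside. Fifth, $u_x$ is bounded on $N$, which follows from the Lipschitz bound of Lemma \ref{theo030105}(h).

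With Assumption (A) verified, the transport equations for $w$ and $\psi_m$ (equations (\ref{eqA10108}) and (\ref{eqA10109})) are linear first-order ODEs along the characteristics $\gamma_{y,v}$, with boundary values $w=\psi_m=0$ at the exit point. Because the characteristics foliate $N$ smoothly, integrating along them gives $\mathcal{C}^\infty$ solutions; this is the method of characteristics of Remark \ref{remA101}. Theorem \ref{theoA102} then yields (\ref{eq030109}) uniformly on compact subsets of $N$, in particular on $K$. Finally, $w$ and $\psi_m$ are determined uniquely by integration along characteristics, so the functions obtained from different choices of $N$ agree on overlaps. They therefore patch together into well-defined smooth functions on $S_0$.

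The main obstacle is showing that $N$ is open and smoothly foliated. This requires the map $(y,v,t)\mapsto(\gamma_{y,v}(t),t)$ to be a local diffeomorphism, i.e. no focal points along optimal paths inside $S$. My first attempt is to note that $\partial_x\gamma_{y,v}(t)$ solves the linearised equation $\dot\xi=(b_x-u_{xx})\xi$, with $u_{xx}$ finite by (\ref{eq030108}) and the nondegeneracy $\partial_t g\neq0$. Hence $\xi$ never vanishes and the flow is a diffeomorphism onto its image. A second delicate point is that $\bar N$ must stay away from the non-strongly-regular set. Since $N$ consists only of forward segments of trajectories from $\bar U\subset S_0$, Lemma \ref{theo030106}(b) keeps these segments strongly regular, and compactness of $\bar U$ together with the continuity of $\nu$ gives the required uniformity.
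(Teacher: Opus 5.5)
Your proposal is correct. Its skeleton is the paper's: exhibit regions satisfying Assumption (A) and invoke Theorem~\ref{theoA102}, using Lemma~\ref{theo030105}(e), Lemma~\ref{theo030106}(a),(b), Proposition~\ref{theo030107} and Corollary~\ref{theo030108}. What differs is the choice of region.

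The paper takes $S_{T'}=\{(x,s)\in S:\nu_{x,s}<T'\}$ and works with its connected components. These components have three conveniences:
\begin{itemize}
\item they are open and forward-invariant automatically, since $\nu$ is constant along optimal paths and these paths stay in $S$;
\item they increase with $T'$, so $w$ and $\psi_m$ are defined once on all of $S$ and no patching is needed;
\item Lemma~\ref{theo030105}(e) places any compact connected $K\subset S$ inside one of them.
\end{itemize}
You instead take the forward tube of optimal paths issuing from a relatively compact $U$ with $\bar U\subset S_0$. This costs you an openness argument and a patching step. In return, every point of $\bar N\cap(D\times[0,T))$ lies on a forward optimal path from $\bar U$, so it is strongly regular by Lemma~\ref{theo030106}(b). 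The closure of a component of $S_{T'}$ can instead contain non-strongly-regular points. An example is the point of $\Theta_2$ in Remark~\ref{rem030106}, when it exists and $u_1<u_2$ there: by Remark~\ref{rem030105}(b), $\partial_y^2u$ blows up along its forward optimal path. So the requirement $u\in\mathcal{C}^\infty(\bar N)$ of Assumption (A) is genuinely easier to justify for your $N$ than for the paper's regions.

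Two smaller points.

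First, the ``focal point'' issue you single out is not a real obstacle. By Proposition~\ref{theo030107}, Corollary~\ref{theo030108} and Lemma~\ref{theo030106}(b), optimal paths from points of $S$ are integral curves of the smooth field $(b-\partial_x u,1)$ on the open set $S$. Openness of $N$ therefore follows from continuity of this flow and of $\nu$ on $S$; your linearised equation for $\xi$ expresses the same fact.

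Second, Assumption (A) asks for $u\in\mathcal{C}^\infty(\bar N)$, not merely $u\in\mathcal{C}^\infty(N)$ as you wrote. It also asks for $\Gamma_1$ to be a smooth curve relatively open in $\partial N$. Neither you nor the paper checks smoothness of $u$ up to the exit points on $\partial D$. One way to supply this:
\begin{itemize}
\item The condition $g=0$ of Remark~\ref{rem030104} is equivalent to $\dot\gamma_{x,s}(\nu_{x,s})=-b(d_{x,s},\nu_{x,s})$. So the non-tangential exit of Lemma~\ref{theo030106}(a) means the drift at the exit point is nonzero and points into $D$.
\item The substitution $t-v=\vert d-y\vert\tau$ then writes $u(y,v;d,t)$ as $\vert d-y\vert$ times a function smooth up to $y=d$, with a non-degenerate minimum in $\tau$.
\item The implicit function theorem then gives smoothness of $u$ up to $\partial D$.
\end{itemize}
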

	\begin{proof}
		For any $T'\in(0,T)$, define
		\begin{equation*}
			S_{T'}\triangleq\{(x,s)\in{S}:\nu_{x,s}\in(0,T')\}.
		\end{equation*}
		By Lemma \ref{theo030106}(a), Proposition \ref{theo030107}, and Corollary \ref{theo030108}, every connected component of $S_{T'}$ satisfies Assumption (A). Moreover, Lemma \ref{theo030105}(e) shows that for every compact connected subset $K$ of $S$, there exists some $T'\in(0,T)$ such that $\nu_{x,s}<T'$ for all $(x,s)\in K$. Hence, $K\subset S_{T'}$, and so $K$ is contained in a connected component of $S_{T'}$. The statement then follows directly from the Fleming-James theorem. This completes the proof. 
	\end{proof}
	\begin{remark}\label{rem030107}
		It is evident that the equation $g(x,s;d,t)=0$ is of quartic type, which makes it difficult to find explicit solutions. Consequently, our ability to provide closed-form expressions is limited, and we must restrict ourselves to qualitative descriptions of the quantities involved. Fortunately, in the special case where $a_0=0$ and $a_1\to{0}$, the problem simplifies considerably and all quantities can be determined explicitly. This situation will be discussed in detail in the next section.
	\end{remark}
	
	\section{In The limiting case: $a_0=0$, $a_1\to{0}$}\label{Sec04}
	When $a_0=0$ and $a_1\to{0}$, the OU bridge reduces to a Brownian bridge, which is governed by the SDE (\ref{eq010101}) with drift term
	\begin{equation*}
		b(x,t)=\frac{x_T-x}{T-t}.
	\end{equation*}
	The asymptotic series of $q^{\varepsilon}$ for this process follows as corollaries of Proposition \ref{theo0205}, \ref{theo030103}, \ref{theo030110}, and is given below.
	\begin{corollary}\label{theo040101}
		(a) For any $t\in[s,T]$, $x^{0}_{x,s}(t)=x\frac{T-t}{T-s}+x_T\frac{t-s}{T-s}$. 
		
		(b) So, $x^{0}_{x,s}$ is monotonic on $[s,T]$ for all $(x,s)\in\mathbb{R}\times[0,T)$. 
		
		(c) The set $\Omega$ is always empty. 
		
		(d)  When $x_T>d_2>d_1$, the trajectory $\{(x^{0}_{x,s}(t),t):t\in[s,T]\}$ intersects $\partial{D}\times[0,T)$ transversely at a unique point $(d_2,\tau^{0}_{x,s})$ with $\tau^{0}_{x,s}=s\frac{x_T-d_2}{x_T-x}+T\frac{d_2-x}{x_T-x}$. If $x_T\in{D}$, the trajectory remains inside ${D}\times[0,T)$ until it reaches $(x_T,T)$.
	\end{corollary}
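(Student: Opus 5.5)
The plan is to pass to the limit $a_0=0$, $a_1\to 0$ in the explicit formula (\ref{eq020102}), and then read off (b)--(d) from the resulting affine trajectory. Alternatively, one can solve the limiting ODE directly.

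For (a), set $a_0=0$ in (\ref{eq020102}) so that the terms $a_0/a_1$ disappear. This gives $x^{0}_{x,s}(t)=x\frac{\sinh(a_1(T-t))}{\sinh(a_1(T-s))}+x_T\frac{\sinh(a_1(t-s))}{\sinh(a_1(T-s))}$. Since $\sinh(a_1 r)/a_1\to r$ as $a_1\to 0$, uniformly for $r$ in compact sets, the two ratios converge to $\frac{T-t}{T-s}$ and $\frac{t-s}{T-s}$. As an independent check, the affine function $x\frac{T-t}{T-s}+x_T\frac{t-s}{T-s}$ satisfies $\dot{x}=\frac{x_T-x}{T-s}$. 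Along the trajectory, $x_T-x^{0}_{x,s}(t)=(x_T-x)\frac{T-t}{T-s}$, so this equals $b(x^{0}_{x,s}(t),t)=\frac{x_T-x^{0}_{x,s}(t)}{T-t}$. By uniqueness for the linear ODE (\ref{eq020101}) with the limiting drift, the affine function is the solution.

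For (b), the derivative $\dot{x}^{0}_{x,s}\equiv(x_T-x)/(T-s)$ is constant in $t$. Hence the trajectory is increasing, decreasing, or constant according to the sign of $x_T-x$. For (c), in the definition of $\Omega$ take $a_0=0$ and let $a_1\to0$. Both endpoints $1/\cosh(a_1(T-s))$ and $\cosh(a_1(T-s))$ tend to $1$, so the open interval they bound collapses to the empty set. This is consistent with (b): there is no interior turning point $t_1$.

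For (d), assume $x_T>d_2>d_1$ and $x\in D$, so $x<d_2<x_T$. Then $x^{0}_{x,s}$ is strictly increasing and affine, running from $x$ at time $s$ to $x_T$ at time $T$. It therefore never reaches $d_1$ and crosses $d_2$ exactly once. Solving $x\frac{T-t}{T-s}+x_T\frac{t-s}{T-s}=d_2$ for $t$ gives $t-s=(T-s)\frac{d_2-x}{x_T-x}$. This rearranges to $\tau^{0}_{x,s}=s\frac{x_T-d_2}{x_T-x}+T\frac{d_2-x}{x_T-x}$, which is a convex combination of $s$ and $T$ with strictly positive weights, so $\tau^{0}_{x,s}\in(s,T)$. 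The crossing is transversal because the slope $(x_T-x)/(T-s)$ is strictly positive.

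If $x_T\in D$, then $x^{0}_{x,s}(t)$ is a convex combination of $x$ and $x_T$, both of which lie in $D$. Since $D$ is convex, $x^{0}_{x,s}(t)\in D$ for all $t\in[s,T]$. The only step needing any care is justifying the limit in (a). The direct ODE verification given above avoids that issue entirely, so I would present the verification as the proof and mention the limit only as motivation.
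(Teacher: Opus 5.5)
Your proposal is correct and follows essentially the paper's route: the paper gives no separate argument for this corollary and simply reads (a)--(d) off as the $a_0=0$, $a_1\to0$ specialization of (\ref{eq020102}), Lemma~\ref{theo0201}, Remark~\ref{rem020101} and Corollary~\ref{theo0203}, and your direct ODE verification of (a) makes that limit rigorous. The one point to tighten is (c): for $x_T\neq0$ the value $x/x_T=1$ lies in $\bigl(1/\cosh(a_1(T-s)),\cosh(a_1(T-s))\bigr)$ for every $a_1\neq0$, so the line $x=x_T$ belongs to every $\Omega$ and the sets do not literally shrink to $\emptyset$; you should instead justify (c) from (b), using the characterization in Remark~\ref{rem020101} of $\Omega$ as the set of starting points with non-monotone $x^{0}_{x,s}$ (equivalently, the open region strictly between $\text{Cur}_{I}$ and $\text{Cur}_{II}$, both of which degenerate to the line $x=x_T$).
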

	\begin{corollary}\label{theo040102}
		(a) If $x_T\notin{D}$, then $q^{\varepsilon}(x,s)\equiv{1}$ for all $(x,s)\in{D}\times[0,T)$.
		
		(b) In particular, when $x_T>d_2>d_1$, for every $(x,s)\in{D}\times[0,T)$ and $\eta>0$,
		\begin{equation*}
			\lim_{\varepsilon\to{0}}\mathbb{P}^{\varepsilon}_{x,s}\left(\left\vert\tau^{\varepsilon}_{x,s}-\tau^{0}_{x,s}\right\vert>\eta\right)={0},
		\end{equation*}
		\begin{equation*}
			\lim_{\varepsilon\to{0}}\mathbb{P}^{\varepsilon}_{x,s}\left({x}^{\varepsilon}_{\tau^{\varepsilon}_{x,s}}=d_2\right)=1.
		\end{equation*}
	\end{corollary}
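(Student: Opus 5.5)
Both parts are limiting cases of what was established for the OU bridge, so the plan is to pass to $a_0=0$, $a_1\to 0$ in the earlier arguments and check that nothing degenerates.

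\emph{Part (a).} I will reproduce the argument of Remark \ref{rem020103}(a) for the Brownian bridge. Letting $a_0=0$, $a_1\to0$ in (\ref{eq010103})--(\ref{eq010104}) gives the mean $x\frac{T-t}{T-s}+x_T\frac{t-s}{T-s}$ and the covariance $\varepsilon\frac{(v-s)(T-t)}{T-s}$. The same formulas can also be checked directly from the linear SDE with drift $b(x,t)=(x_T-x)/(T-t)$. Consequently $\mathbb{E}^{\varepsilon}_{x,s}(x^\varepsilon_T)=x_T$ and $\mathbb{V}\mathrm{ar}(x^\varepsilon_T)=0$, so $x^\varepsilon_T=x_T$ almost surely. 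I will make this rigorous by taking the $L^2$-limit as $t\uparrow T$, since the variance $\varepsilon(t-s)(T-t)/(T-s)\to0$. The sample paths are almost surely continuous on $[s,T]$, and $x_T\notin D$ while $x\in D$. By the intermediate value theorem, almost every path therefore meets $\partial D$ at some time in $(s,T]$. Hence $\tau^\varepsilon_{x,s}\le T$ almost surely, which gives $q^\varepsilon\equiv1$.

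\emph{Part (b).} I will follow the proof of (\ref{eq020106})--(\ref{eq020107}) in Remark \ref{rem020103}(b), using the explicit trajectory of Corollary \ref{theo040101}.
\begin{enumerate}
\item Since $\Omega=\emptyset$, the trajectory $x^0_{x,s}$ is the affine interpolation between $x$ and $x_T$. It is strictly increasing, with $\dot x^0_{x,s}=(x_T-x)/(T-s)>0$.
\item It therefore never reaches $d_1$, and it crosses $d_2$ transversally at the single time $\tau^0_{x,s}<T$ given in Corollary \ref{theo040101}(d).
\item Fix $\eta>0$ small and choose $T'\in(\tau^0_{x,s}+\eta,T)$. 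Let
\[
\delta<\min\Bigl\{d_2-\max_{t\le\tau^0_{x,s}-\eta}x^0_{x,s}(t),\; \min_t x^0_{x,s}(t)-d_1,\; x^0_{x,s}(\tau^0_{x,s}+\eta)-d_2\Bigr\}.
\]
All three quantities are positive by strict monotonicity and because $x^0_{x,s}\ge x>d_1$.
\item On the event $\{\sup_{[s,T']}|x^\varepsilon_t-x^0_{x,s}(t)|\le\delta\}$ the following hold. The path stays in $(d_1,d_2)$ on $[s,\tau^0_{x,s}-\eta]$. It stays above $d_1$ on all of $[s,T']$. It exceeds $d_2$ at time $\tau^0_{x,s}+\eta$.
\item On this event, then, the exit happens in $[\tau^0_{x,s}-\eta,\tau^0_{x,s}+\eta]$ and through $d_2$.
\item The law of large numbers (\ref{eq020105}) on $[s,T']$ gives this event probability tending to $1$ as $\varepsilon\to0$. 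This proves both limits.
\end{enumerate}

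\emph{Main obstacle.} The main difficulty is justifying (\ref{eq020105}) for the Brownian bridge, whose drift is singular at $t=T$. This is handled by restricting to $[s,T']$ with $T'<T$, where $b$ is Lipschitz and the standard Freidlin--Wentzell estimate applies. Alternatively, I would use Gronwall's inequality on $x^\varepsilon_t-x^0_{x,s}(t)$ together with Doob's maximal inequality for $\sqrt{\varepsilon}\,w$.
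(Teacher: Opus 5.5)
Your proposal is correct and follows the paper's route. Part (a) specializes Remark~\ref{rem020103}(a): zero variance at $T$, together with almost-sure path continuity, forces $x^{\varepsilon}_T=x_T\notin D$. Part (b) specializes Remark~\ref{rem020103}(b): the law of large numbers (\ref{eq020105}) on $[s,T']$ is combined with the transversal crossing of $d_2$ by the strictly increasing affine trajectory, and your explicit $\delta$-tube estimate spells out the step the paper takes by citing Freidlin--Wentzell (Chapter 2, Theorem 2.3).
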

	\begin{corollary}\label{theo040103}
		Assume $x_T\in{D}$. Then the following hold:
		
		(a) The function $u(x,s;d,t)$ can be expressed as
		\begin{equation*}
			\begin{aligned}
				u(x,s;d,t)=&\frac{(d(T-s)-x(T-t)-x_T(t-s))^2}{2(T-s)(T-t)(t-s)}\\
				=&\frac{1}{2}\left(\frac{(d-x)^2}{t-s}+\frac{(d-x_T)^2}{T-t}-\frac{(x-x_T)^2}{T-s}\right).
			\end{aligned}
		\end{equation*}
		
		(b) The equation $\frac{\partial u(x,s;d,t)}{\partial t}=0$ has a unique solution 
		\begin{equation*}
			\nu^{*}=s+(T-s)\frac{x-d}{x+x_T-2d}.
		\end{equation*}
		
		(c) Consequently,
		\begin{equation*}
			d_{x.s}=\begin{cases}
				d_1, & \text{if}\;x\leq d_1+d_2-x_T,\\
				d_2, & \text{if}\;x\geq d_1+d_2-x_T,
			\end{cases}
		\end{equation*}
		\begin{equation*}
			\nu_{x,s}=\begin{cases}
				s+(T-s)\frac{x-d_1}{x+x_T-2d_1}, & \text{if}\;x\leq d_1+d_2-x_T,\\
				s+(T-s)\frac{x-d_2}{x+x_T-2d_2}, & \text{if}\;x\geq d_1+d_2-x_T,
			\end{cases}
		\end{equation*}
		\begin{equation*}
			u(x,s)=\begin{cases}
				\frac{2(x-d_1)(x_T-d_1)}{T-s}, & \text{if}\;x\leq d_1+d_2-x_T,\\
				\frac{2(d_2-x)(d_2-x_T)}{T-s}, & \text{if}\;x\geq d_1+d_2-x_T.
			\end{cases}
		\end{equation*}
		
		(d) The set of all strongly regular points is
		\begin{equation*}
			S=\left((d_1,d_1+d_2-x_T)\cup(d_1+d_2-x_T,d_2)\right)\times[0,T).
		\end{equation*}
		
		(e) For any $(x,s)\in{D}\times[0,T)\setminus S$, the minimum of $u(x,s;d,t)$ is attained at two distinct points $(d_1,\frac{(x_T-d_1)s+(d_2-x_T)T}{d_2-d_1})$ and $(d_2,\frac{(d_2-x_T)s+(x_T-d_1)T}{d_2-d_1})$, giving two coexisting optimal paths:
		\begin{equation*}
			\begin{aligned}
				\gamma_{x,s}^{(1)}=&\gamma_{x,s;d_1,\frac{(x_T-d_1)s+(d_2-x_T)T}{d_2-d_1}},\\
				\gamma_{x,s}^{(2)}=&\gamma_{x,s;d_2,\frac{(d_2-x_T)s+(x_T-d_1)T}{d_2-d_1}}.\\
			\end{aligned}
		\end{equation*}
		There is no point for which condition (a) of Definition \ref{def030101} holds but condition (b) fails.
		
		(f) Since $\frac{\partial^2 u}{\partial x^2}\equiv 0$ on $S$, we have $w(x,s)\equiv{0}$ and $\psi_m(x,s)\equiv{0}$ for all $(x,s)\in{S}$ and every $m>1$.
		
		(g) For each $m=0,1,2,\cdots$,
		\begin{equation}\label{eq040101}
			q^{\varepsilon}(x,s)=\exp\left(-\frac{u(x,s)}{\varepsilon}\right)(1+o(\varepsilon^{m})),\quad \text{as}\;\varepsilon\to{0},
		\end{equation}
		uniformly on compact subsets of $S$.
	\end{corollary}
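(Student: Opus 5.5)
The plan is to work directly with the explicit Brownian-bridge formulas, obtained as the limit $a_0=0$, $a_1\to0$ of the OU expressions; everything reduces to elementary one-variable calculus. For (a), I will let $a_1\to0$ in (\ref{eq030103}), using $\sinh(a_1 r)\sim a_1 r$, which gives
\[
u(x,s;d,t)=\frac{(d(T-s)-x(T-t)-x_T(t-s))^2}{2(T-s)(T-t)(t-s)}.
\]
The partial-fraction form then follows by writing $d(T-s)-x(T-t)-x_T(t-s)=(d-x)(T-t)+(d-x_T)(t-s)$, expanding the square, and dividing. For (b), I differentiate the partial-fraction form in $t$, which yields $(d-x)^2/(t-s)^2=(d-x_T)^2/(T-t)^2$. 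Since $x,x_T\in D$ and $d\in\partial D$, the quantities $d-x$ and $d-x_T$ have the same sign, so the unique root in $(s,T)$ is $(t-s)/(T-t)=(x-d)/(d-x_T)$, which rearranges to $\nu^*$. Convexity in $t$ (the second derivative is positive) then shows that this critical point is a nondegenerate minimum.

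For (c), I substitute $\nu^*$ back into $u$. A short computation should give $\min_t u(x,s;d,t)=2(x-d)(x_T-d)/(T-s)$; this product is positive for both $d=d_1$ and $d=d_2$. Comparing $u_1=2(x-d_1)(x_T-d_1)/(T-s)$ with $u_2=2(d_2-x)(d_2-x_T)/(T-s)$, the difference $u_2-u_1$ is, up to the positive factor $2/(T-s)$, the affine function $(d_2-d_1)(d_1+d_2-x-x_T)$. Hence $d_{x,s}=d_1$ exactly when $x\le d_1+d_2-x_T$, and the formulas for $\nu_{x,s}$ and $u$ follow.

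For (d) and (e), I will argue as follows. Off the line $x=d_1+d_2-x_T$ the minimizer is unique, and by the convexity noted above the minimum is nondegenerate. This settles condition (b) of Definition \ref{def030101} and the final claim of (e). On the line itself, both boundary points tie. Substituting $x=d_1+d_2-x_T$ into $\nu^*$ gives the two stated exit times, and Remark \ref{rem030105}(a) confirms that these points are not strongly regular.

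For (f), I note that $u$ is affine in $x$ on each component of $S$, so $\partial^2u/\partial x^2\equiv0$. I then invoke the transport equations (\ref{eqA10108})--(\ref{eqA10109}) for $w$ and $\psi_m$ along the characteristics $\dot\gamma=b-u_x$ (Corollary \ref{theo030108}). With $u_{xx}=0$, and since $b$ is affine so that $b_x$ enters only through a term I expect to cancel, these reduce to homogeneous linear ODEs. The boundary data $w=0$ and $\psi_m=0$ on $\partial D$ then force $w\equiv0$ and $\psi_m\equiv0$.

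\textbf{Main obstacle.} The step I expect to be hardest is (f): I need to check that the exact form of (\ref{eqA10108}) contains no inhomogeneous source term, such as $b_x$ or $\frac12 u_{xx}$. Here $u_{xx}=0$ removes the $u_{xx}$ term. If a $b_x$ term survives, I would compute $w$ explicitly along the straight characteristics and verify that it vanishes, which is consistent with the known result of Baldi. Once (f) is established, (g) follows from Proposition \ref{theo030110}, taking $w=0$ and $\psi_m=0$.
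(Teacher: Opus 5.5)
Your proposal is correct and follows the route the paper intends. The paper gives no separate argument: it treats the corollary as the $a_0=0$, $a_1\to0$ specialization of Lemma~\ref{theo030104}, the comparison $u=u_1\wedge u_2$, Definition~\ref{def030101}, and Proposition~\ref{theo030110} together with the characteristic representation of Remark~\ref{remA101}, and your elementary calculus carries out exactly these steps.

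Two small corrections. In (b), the root satisfies $(t-s)/(T-t)=(x-d)/(x_T-d)$; your $(x-d)/(d-x_T)$ is negative and would not rearrange to $\nu^{*}$. In (f), the worry about a $b_x$ term is moot, because (\ref{eqA10108}) has no such term and its only source is $-\frac{1}{2}u_{xx}$. So with $u_{xx}\equiv0$, $w$ and then (inductively) each $\psi_m$ are constant along characteristics that end on $\partial D$, where they vanish.
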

	\begin{remark}\label{rem040101}
		Similar to Remark \ref{rem030101}, the infinitesimal term $o(\varepsilon^m)$ in (\ref{eq040101}) cannot be neglected either. Observe that
		\begin{equation*}
			\begin{aligned}
				\mathbb{P}^{\varepsilon}_{x,s}\left(\tau^{\varepsilon}_{x,s}\leq{T}\right)=&\mathbb{P}^{\varepsilon}_{x,s}\left(\tau^{\varepsilon}_{x,s}\leq{T},\,x^{\varepsilon}_{\tau^{\varepsilon}_{x,s}}=d_1\right)\\
				&+\mathbb{P}^{\varepsilon}_{x,s}\left(\tau^{\varepsilon}_{x,s}\leq{T},\,x^{\varepsilon}_{\tau^{\varepsilon}_{x,s}}=d_2\right).
			\end{aligned}
		\end{equation*}
		Large deviation theory (cf. [\onlinecite[Chapter 3, Theorem 3.4]{Freidlin_2012}]) states that the asymptotics of the two terms on the right are
		\begin{equation*}
			\begin{aligned}
				\mathbb{P}^{\varepsilon}_{x,s}\left(\tau^{\varepsilon}_{x,s}\leq{T},\,x^{\varepsilon}_{\tau^{\varepsilon}_{x,s}}=d_1\right)\simeq&\exp\left(-\frac{u_1(x,s)}{\varepsilon}\right),\\
				\mathbb{P}^{\varepsilon}_{x,s}\left(\tau^{\varepsilon}_{x,s}\leq{T},\,x^{\varepsilon}_{\tau^{\varepsilon}_{x,s}}=d_2\right)\simeq&\exp\left(-\frac{u_2(x,s)}{\varepsilon}\right),
			\end{aligned}
		\end{equation*}
		where $u_1$ and $u_2$ (defined in Remark \ref{rem030106}) are given by
		\begin{equation*}
			\begin{aligned}
				u_1(x,s)=&\frac{2(x-d_1)(x_T-d_1)}{T-s},\\
				u_2(x,s)=&\frac{2(d_2-x)(d_2-x_T)}{T-s}.
			\end{aligned}
		\end{equation*}
		
		If $u_1(x,s)\neq u_2(x,s)$ (i.e., $x\neq d_1+d_2-x_T$), then
		\begin{equation*}
			\mathbb{P}^{\varepsilon}_{x,s}\left(\tau^{\varepsilon}_{x,s}\leq{T}\right)\simeq\exp\left(-\frac{u(x,s)}{\varepsilon}\right)\left(1+\exp\left(-\frac{\Delta u}{\varepsilon}\right)\right),
		\end{equation*}
		with
		\begin{equation*}
			\Delta u\triangleq u_1\vee u_2-u_1\wedge u_2.
		\end{equation*}
		Since $\exp\left(-{\Delta u}/{\varepsilon}\right)$ is smooth but non-analytic in $\varepsilon$ at $\varepsilon=0$, it cannot be expanded in a convergent Taylor series. Consequently, although Equation (\ref{eq040101}) holds for every $m\geq{0}$, the term $o(\varepsilon^{m})$ cannot be neglected, as it represents an exponentially small contribution. 
		
		In contrast, when $x=d_1+d_2-x_T$, we have $u_1(x,s)=u_2(x,s)$. Therefore,
		\begin{equation*}
			\mathbb{P}^{\varepsilon}_{x,s}\left(\tau^{\varepsilon}_{x,s}\leq{T}\right)\simeq2\exp\left(-\frac{u(x,s)}{\varepsilon}\right).
		\end{equation*}
		This explains why the asymptotic series (\ref{eq040101}) fails at points that are not strongly regular.
	\end{remark}
	
	\section{Conclusions}\label{Sec05}
	 In this paper, we have derived an accurate asymptotic expansion for the exit time probabilities of scalar Ornstein-Uhlenbeck bridges. By analyzing the solution of an associated constrained optimization problem, we have shown that the asymptotic series remains valid for generic positions, both in the topological and in the measure‑theoretic sense. The results presented here extend earlier findings on diffusion bridges and provide a precise estimate of exit time probabilities for the corresponding Ornstein-Uhlenbeck process conditioned on prescribed initial and terminal states.

	\begin{acknowledgments}
		The authors acknowledge support from the National Natural Science Foundation of China (Nos. 12172167, 12202318, 12302035, 12572038) and the Jiangsu Funding Program for Excellent Postdoctoral Talent (No. 2023ZB591).
	\end{acknowledgments}
	
	\section*{Data Availability Statement}
	The data that support the findings of this study are available from the corresponding author upon reasonable request. 
		
	\appendix
	\section{Fleming-James Theorem for Diffusion Processes}\label{SecA1}
	Let $D\subset\mathbb{R}$ be a non-empty open set. Fix $T>0$ and consider a diffusion process $(x^{\varepsilon},\mathbb{P}^{\varepsilon}_{x,s})$ governed by the following SDE:
	\begin{equation}\label{eqA10101}
		\begin{aligned}
			&\mathrm{d}x^{\varepsilon}_{t}=b(x^{\varepsilon}_{t},t)\mathrm{d}{t}+\sqrt{\varepsilon}\mathrm{d}w_{t},\quad t\in(s,T],\\ &x^{\varepsilon}_{s}=x\in{D}.
		\end{aligned}
	\end{equation}
	The drift coefficient $b$ is assumed to be of class $\mathcal{C}^{\infty}(\mathbb{R}\times[0,T],\mathbb{R})$ and Lipschitz continuous in $x$, uniformly with respect to $t$. Let $\tau^{\varepsilon}_{x,s}\triangleq\inf\{t>s:x^{\varepsilon}_{t}\notin{D}\}$ be the first exit time of $x^{\varepsilon}$ from $D$. Define the exit time probability function as
	\begin{equation*}
		q^{\varepsilon}(x,s)\triangleq \mathbb{P}^{\varepsilon}_{x,s}\left(\tau^{\varepsilon}_{x,s}\leq{T}\right).
	\end{equation*}
	Then $q^{\varepsilon}(x,s)$ belongs to $\mathcal{C}^{2,1}(K)$ for every compact subset $K\subset\bar{D}\times[0,T)$, and solves the boundary value problem \cite{Fleming_1992}:
	\begin{equation}\label{eqA10102}
		\begin{aligned}
			&\frac{\partial{q}^{\varepsilon}}{\partial{s}}+b(x,s)\frac{\partial{q}^{\varepsilon}}{\partial{x}}+\frac{\varepsilon}{2}\frac{\partial^2{q}^{\varepsilon}}{\partial{x}^2}=0\quad\text{in}\;D\times(0,T),\\
			&{q}^{\varepsilon}(x,s)=1\quad\text{on}\; \partial{D}\times[0,T),\\
			&{q}^{\varepsilon}(x,T)=0\quad \text{if}\;x\in{D}.
		\end{aligned}
	\end{equation}
	
	Define the set of admissible paths by
	\begin{equation*}
		\Gamma_{x,s}\triangleq\{\gamma\in\mathcal{C}([s,T],\mathbb{R}):\gamma(s)=x, \exists\,t\in(s,T]\, \text{s.t.}\, \gamma(t)\in\partial{D}\},
	\end{equation*}
	and let
	\begin{equation}\label{eqA10103}
		u(x,s)\triangleq\inf_{\gamma\in{\Gamma}_{x,s}}I(\gamma)\triangleq\inf_{\gamma\in{\Gamma}_{x,s}}\frac{1}{2}\int_{s}^{T}[\dot{\gamma}(t)-b(\gamma(t),t)]^2\mathrm{d}t.
	\end{equation}
	It can be shown that $u\in\mathcal{C}(\bar{D}\times[0,T))$ is the unique viscosity solution of the Hamilton-Jacobi equation \cite{Fleming1986b}:
	\begin{equation}\label{eqA10104}
		\begin{aligned}
			&\frac{\partial{u}}{\partial{s}}+b(x,s)\frac{\partial{u}}{\partial{x}}-\frac{1}{2}\left(\frac{\partial{u}}{\partial{x}}\right)^2=0\quad\text{in}\;D\times(0,T),\\
			&u(x,s)=0\quad \text{on}\;\partial{D}\times[0,T],\\
			&u(x,s)\to\infty\quad\text{as}\;s\to{T}\;\text{if}\;x\in{D}.
		\end{aligned}
	\end{equation}
	The leading-order asymptotics are given by the following well-known result \cite{Wentzell_1970,Freidlin_2012}.
	\begin{theorem}\label{theoA101}
		We have
		\begin{equation*}
			\lim_{\varepsilon\to{0}}\varepsilon\ln{q}^{\varepsilon}(x,s)=-u(x,s),
		\end{equation*}
		uniformly on compact subsets of $\bar{D}\times[0,T)$.
	\end{theorem}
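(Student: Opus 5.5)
We prove Theorem \ref{theoA101} in the standard Freidlin--Wentzell way: a large deviation principle for the path measures of $x^{\varepsilon}$ on $\mathcal{C}([s,T],\mathbb{R})$, applied to the closed set $\bar\Gamma_{x,s}$ and its interior. Uniformity comes from the continuity of $u$ and the uniformity of the LDP bounds in the starting point. Since $b$ is smooth and globally Lipschitz in $x$, the map $\Phi_{x,s}:\mathcal{C}_0([s,T])\to\mathcal{C}([s,T])$ sending a driving path $\phi$ to the solution of $\gamma(t)=x+\int_s^t b(\gamma(r),r)\,\mathrm{d}r+\phi(t)$ is continuous. By Gronwall it is in fact Lipschitz, jointly in $(x,\phi)$. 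Schilder's theorem for $\sqrt{\varepsilon}w$ together with the contraction principle then gives an LDP for $x^{\varepsilon}$ with good rate function $I(\gamma)=\frac12\int_s^T(\dot\gamma-b(\gamma,t))^2\,\mathrm{d}t$ for absolutely continuous $\gamma$ with $\gamma(s)=x$, and $I(\gamma)=+\infty$ otherwise. The joint Lipschitz dependence on $(x,s)$ is what makes the estimates uniform over compact sets of starting points; this is the argument of Freidlin--Wentzell, Chapter 3, Theorems 3.3--3.4.

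\textbf{Upper bound.} The event $\{\tau^{\varepsilon}_{x,s}\le T\}$ equals $\{x^{\varepsilon}\in\Gamma_{x,s}\}$, and $\Gamma_{x,s}$ is closed in $\mathcal{C}([s,T])$: a uniform limit of paths that touch $\partial D$ touches $\partial D$, by compactness of $[s,T]$. The LDP upper bound therefore gives $\limsup\varepsilon\ln q^{\varepsilon}\le-\inf_{\Gamma_{x,s}}I=-u(x,s)$.

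\textbf{Lower bound.} This needs $\inf_{\mathrm{int}\,\Gamma_{x,s}}I=u(x,s)$. Take a near-optimal $\gamma$ that reaches $\partial D$, say $d_2$, at some time $t_0$. Replace it by a path that coincides with $\gamma$ until a time slightly before $t_0$ and then overshoots strictly beyond $\partial D$, reaching $d_2+\eta$ at a time $t_0'\le T$. Such a path lies in the interior of $\Gamma_{x,s}$, and the extra cost of this short linear excursion is $O(\eta)$ by the Lipschitz bound on $b$. If $t_0=T$, first shorten the time slightly; this costs little when $s<T$. Letting $\eta\to0$ gives the claim.

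\textbf{Uniformity and boundary points.} Uniformity on compact subsets of $D\times[0,T)$ follows from two facts. First, the bounds hold uniformly in the starting point (Freidlin--Wentzell, Chapter 3, Theorem 3.4). Second, $u$ is continuous on $\bar D\times[0,T)$; in our setting this is Lemma \ref{theo030105}(h), and in general it follows from the same shift-and-translate construction. On $\partial D\times[0,T)$ both sides vanish, since $q^{\varepsilon}=1$ there. Uniformity up to the boundary then follows from the continuity of $u$ and monotonicity considerations: near the boundary, escape within a short time has probability close to one by the lower-bound construction. I expect the main obstacle to be this uniform lower bound near $\partial D$ together with the endpoint case $t_0=T$. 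The remainder is a routine application of the contraction principle.
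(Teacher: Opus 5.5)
This is the standard Freidlin--Wentzell large-deviation argument: Schilder plus the contraction principle, closedness of $\Gamma_{x,s}$ for the upper bound, and overshooting near-optimal paths for the lower bound. That is exactly what the paper relies on, since it gives no proof of Theorem~\ref{theoA101} and simply cites Wentzell--Freidlin.

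The step that needs tightening is uniformity, for three reasons:
\begin{enumerate}
\item Because $\Gamma_{x,s}$ moves with $(x,s)$, uniform-in-start bounds alone do not suffice. You need a short compactness argument giving a uniform positive distance between $\Gamma_{x,s}$ and $\{I\le u(x,s)-\kappa\}$ over a compact set (using equicontinuity of bounded-action paths, lower semicontinuity of $I$ and continuity of $u$), and a uniform overshoot margin for the lower bound.
\item Near $\partial D$, your construction yields $q^{\varepsilon}\ge e^{-\kappa/\varepsilon}$ uniformly, not an escape probability close to one. That bound is all the lower bound needs there.
\item Lemma~\ref{theo030105}(h) is proved for the OU-bridge drift, which is singular at $t=T$ and so lies outside the appendix's hypotheses. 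Continuity of $u$ should therefore come from your general shift, translate and time-compression argument.
\end{enumerate}
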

	The next result, due to Fleming and James (cf. [\onlinecite[Theorem 4.2]{Fleming_1992}]), provides an asymptotic expansion for ${q}^{\varepsilon}(x,s)$, at least for points $(x,s)$ in a set $N\subset{D}\times[0,T)$ that satisfies the following assumptions.
	
	\textbf{Assumption (A):}
	\begin{itemize}
		\item There exists $T'<T$ such that $N\subset{D}\times[0,T']$ and $N$ is open;
		\item $u\in\mathcal{C}^{\infty}(\bar{N})$;
		\item Define
		\begin{equation}\label{eqA10105}
			\beta(x,s)\triangleq b(x,s)-\frac{\partial{u}(x,s)}{\partial{x}},\quad(x,s)\in\bar{N}.
		\end{equation}
		For each $(x,s)\in N$, let $\gamma_{x,s}$ be the solution of the ODE:
		\begin{equation}\label{eqA10106}
			\begin{aligned}
				&\dot{\gamma}_{x,s}(t)=\beta(\gamma_{x,s}(t),t),\\
				&\gamma_{x,s}(s)=x.
			\end{aligned}
		\end{equation}
		Denote by $\nu_{x,s}$ the first time at which the trajectory $(\gamma_{x,s}(t),t)$ reaches the boundary $\partial{N}$. Set
		\begin{equation*}
			\Gamma_1\triangleq\{z_{x,s}=\left(\gamma_{x,s}(\nu_{x,s}),\nu_{x,s}\right):(x,s)\in{N}\}\subset\partial{N}.
		\end{equation*}
		The following conditions are imposed:
		\begin{itemize}
			\item [$\diamond$] $\Gamma_1\subset\partial{D}\times(0,T')$;
			\item [$\diamond$] $\Gamma_1$ is a $\mathcal{C}^{\infty}$-manifold and is relatively open in $\partial{N}$;
			\item [$\diamond$] $(\gamma_{x,s}(t),t)$ crosses $\partial{N}$ non-tangentially.
		\end{itemize}
	\end{itemize}
	\begin{theorem}\label{theoA102}
		Suppose that $N\subset{D}\times[0,T)$ satisfies Assumption (A). Then for every $m=0,1,2,\cdots$,
		\begin{equation}\label{eqA10107}
			\begin{aligned}
				q^{\varepsilon}(x,s)=&\exp\left(-\frac{u(x,s)}{\varepsilon}-w(x,s)\right)(1+\varepsilon\psi_1(x,s)\\
				&+\cdots+\varepsilon^{m}\psi_m(x,s)+o(\varepsilon^{m})),
			\end{aligned}
		\end{equation}
		as $\varepsilon\to{0}$, uniformly on compact subsets of $N$. The functions $w,\psi_m\in\mathcal{C}^{\infty}(N)$ (with $\psi_0\equiv{1}$) satisfy
		\begin{equation}\label{eqA10108}
			\begin{aligned}
				&\frac{\partial{w}}{\partial{s}}+\left(b-\frac{\partial{u}}{\partial{x}}\right)\frac{\partial{w}}{\partial{x}}=-\frac{1}{2}\frac{\partial^2{u}}{\partial{x}^2}\quad\text{in}\;N,\\
				&w(x,s)=0\quad\text{on}\;\partial{D}\times[0,T)\cap\bar{N},
			\end{aligned}
		\end{equation}
		and, for $m\geq 1$,
		\begin{equation}\label{eqA10109}
			\begin{aligned}
				\frac{\partial{\psi_m}}{\partial{s}}&+\left(b-\frac{\partial{u}}{\partial{x}}\right)\frac{\partial{\psi_m}}{\partial{x}}=-\Bigg[\frac{1}{2}\Bigg(\bigg(\frac{\partial{w}}{\partial{x}}\bigg)^2-\frac{\partial^2{w}}{\partial{x}^2}\Bigg)\\
				&\times\psi_{m-1}-\frac{\partial{w}}{\partial{x}}\frac{\partial{\psi_{m-1}}}{\partial{x}}+\frac{1}{2}\frac{\partial^2{\psi_{m-1}}}{\partial{x}^2}\Bigg]\quad\text{in}\;N,\\
				\psi_m(x&,s)=0\quad\text{on}\;\partial{D}\times[0,T)\cap\bar{N}.
			\end{aligned}
		\end{equation}
	\end{theorem}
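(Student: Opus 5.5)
The plan is a WKB (Hopf--Cole) ansatz, verified by a comparison argument. On $N$ the function $u$ is a $\mathcal{C}^\infty$ viscosity solution of (\ref{eqA10104}), hence a classical one. Set $q^\varepsilon=\exp(-u/\varepsilon)\,\phi^\varepsilon$ and insert it into (\ref{eqA10102}). The $O(\varepsilon^{-1})$ terms cancel because of (\ref{eqA10104}). What remains is the linear equation
\[
\frac{\partial\phi^\varepsilon}{\partial s}+\beta\frac{\partial\phi^\varepsilon}{\partial x}+\frac{\varepsilon}{2}\frac{\partial^2\phi^\varepsilon}{\partial x^2}-\frac12\frac{\partial^2 u}{\partial x^2}\phi^\varepsilon=0 \quad\text{in } N,
\]
with $\beta$ as in (\ref{eqA10105}). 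This is the backward equation of the twisted diffusion $\mathrm{d}y_t=\beta(y_t,t)\mathrm{d}t+\sqrt{\varepsilon}\,\mathrm{d}w_t$ with killing rate $\tfrac12 u_{xx}$. Its $\varepsilon=0$ characteristics are exactly the curves $(\gamma_{x,s}(t),t)$ of (\ref{eqA10106}).

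\textbf{Formal expansion.} Write $\phi^\varepsilon\approx e^{-w}(1+\varepsilon\psi_1+\cdots)$ and collect powers of $\varepsilon$; this produces the transport equations (\ref{eqA10108}) and (\ref{eqA10109}). Each is a first-order linear PDE along the field $(\beta,1)$. I would solve it by integrating backward along $\gamma_{x,s}$ from its terminal point $z_{x,s}\in\Gamma_1$, using zero data there (this is Remark A.1). The hypotheses in Assumption (A) give smoothness: $\Gamma_1$ is a $\mathcal{C}^\infty$ manifold, relatively open in $\partial N$, and the characteristics cross it non-tangentially. By the implicit function theorem, $(x,s)\mapsto(\nu_{x,s},z_{x,s})$ is then $\mathcal{C}^\infty$, so $w$ and every $\psi_k$ lie in $\mathcal{C}^\infty(N)$. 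The boundary condition is the correct one since $q^\varepsilon=1$ and $u=0$ on $\partial D$.

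\textbf{Remainder estimate.} Set $Q_m\triangleq \exp(-u/\varepsilon-w)\sum_{k=0}^{m+1}\varepsilon^k\psi_k$. By construction $\mathcal{L}^\varepsilon Q_m=\varepsilon^{m+2}\exp(-u/\varepsilon-w)R_m$ with $R_m$ smooth, where $\mathcal{L}^\varepsilon$ is the operator in (\ref{eqA10102}); equivalently, the corresponding $\tilde\phi_m=e^{u/\varepsilon}Q_m$ solves the $\phi$-equation up to $O(\varepsilon^{m+2})$. Fix a compact $K\subset N$. I would choose a subdomain $N'\subset N$ that contains $K$ and is a thin tube of characteristics ending on a compact piece of $\Gamma_1$. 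I would then represent $\phi^\varepsilon-\tilde\phi_m$ by Feynman--Kac for the twisted diffusion stopped on leaving $N'$. Three contributions arise:
\begin{itemize}
\item The source term is $O(\varepsilon^{m+2})$ times a bounded expected exit time.
\item On $\Gamma_1\cap\partial N'$ the boundary values agree, so this part contributes $0$.
\item On $\partial N'\setminus\Gamma_1$ (the lateral sides of the tube) the difference is uncontrolled.
\end{itemize}

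\textbf{Main obstacle.} The hard part is the lateral contribution. There we only know, from Theorem \ref{theoA101}, that $\phi^\varepsilon\le e^{\delta/\varepsilon}$ for any fixed $\delta>0$ and small $\varepsilon$. I would offset this against the probability that the twisted diffusion reaches the lateral boundary before $\Gamma_1$. Since its deterministic flow carries $K$ into the interior of $\Gamma_1$, a uniform distance $\rho>0$ from the sides, the Freidlin--Wentzell estimate gives that probability $\le\exp(-c\rho^2/\varepsilon)$. Choosing $\delta<c\rho^2$ makes the lateral contribution $o(\varepsilon^{m+1})$. Dividing by $e^{-w}$, which is bounded on $K$, yields (\ref{eqA10107}) uniformly on $K$ with remainder $O(\varepsilon^{m+1})=o(\varepsilon^m)$.
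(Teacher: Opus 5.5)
The paper contains no proof of Theorem~\ref{theoA102}. It quotes the result from Fleming and James (their Theorem~4.2), so the only comparison is with a black-box citation. Your outline is a correct, self-contained verification proof. Each of the three main steps holds:
\begin{itemize}
\item The substitution $q^\varepsilon=e^{-u/\varepsilon}\phi^\varepsilon$ removes the $O(\varepsilon^{-1})$ terms exactly because $u$ is a classical solution of (\ref{eqA10104}) on $N$.
\item Writing $\phi^\varepsilon=e^{-w}\sum_k\varepsilon^k\psi_k$ reproduces (\ref{eqA10108})--(\ref{eqA10109}). The residual of the truncation is indeed $\varepsilon^{m+2}e^{-u/\varepsilon-w}R_m$ with $R_m=\tfrac12\bigl[\partial_x^2\psi_{m+1}-2\,\partial_x w\,\partial_x\psi_{m+1}+((\partial_x w)^2-\partial_x^2 w)\psi_{m+1}\bigr]$.
\item The Feynman--Kac representation for the $\beta$-diffusion closes the argument. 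Exit away from $\Gamma_1$ is suppressed by a Gaussian tube estimate, which beats the crude bound $\phi^\varepsilon\le e^{\delta/\varepsilon}$ from Theorem~\ref{theoA101}.
\end{itemize}
The citation covers more: several dimensions, non-constant diffusion and $\varepsilon$-dependent drift (Remark~\ref{remA101}(c)). Your route instead shows which clause of Assumption (A) does what:
\begin{itemize}
\item smoothness of $u$ on $\bar N$ gives the cancellation and a bounded $R_m$;
\item smoothness of $\Gamma_1$ plus non-tangency ($\beta\neq0$ on $\Gamma_1$) gives smoothness of $\nu_{x,s}$, $w$ and $\psi_k$ up to $\Gamma_1$;
\item relative openness of $\Gamma_1$ in $\partial N$ gives the positive distance $\rho$.
\end{itemize}

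Some points should be made explicit when you write this up.
\begin{itemize}
\item You cannot take $N'=N$, because $w$ and $\psi_k$ need not be bounded near $\partial N\setminus\Gamma_1$. Take $N'$ to be the union of forward characteristics from an open $U$ with $K\subset U$ and $\bar U\subset N$ compact, so that $\bar N'\subset N\cup\Gamma_1$. Then extend $u$, hence $\beta$, $w$ and $\psi_k$, smoothly across the $\Gamma_1$-face, using the implicit function theorem and $\beta\neq0$ there.
\item Define $\rho$ as the distance from the compact union of characteristic segments issuing from $K$ to the closure of $\partial N'\setminus\Gamma_1$. It is positive because the endpoints of those segments form a compact subset of the relatively open set of endpoints of characteristics from $U$.
\item In the tube estimate, compare $y$ with the characteristic continued slightly past $\nu_{x,s}$ by the extended field. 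Then staying within a suitable $\rho'\in(0,\rho]$ of it forces exit through the $\Gamma_1$-face.
\item $\tfrac12\partial_x^2u$ may be negative, so it is a potential rather than a killing rate. This is harmless because all times are bounded by $T'$.
\item In the last step you need $e^{-w}$ bounded away from zero on $K$, not bounded.
\end{itemize}
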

	\begin{remark}\label{remA101}
			(a) Essentially, Assumption (A) provides a strong regularity condition that ensures the validity of the asymptotic expansion for $q^{\varepsilon}$. If a region $N$ satisfies this assumption, then the following properties hold:
			\begin{itemize}
				\item For every $(x,s)\in{N}$, the infimum in (\ref{eqA10103}) is attained by a unique path $\gamma\in{\Gamma_{x,s}}$.
				\item Classical calculus of variations shows that this minimizing path coincides with the characteristic curve $\gamma_{x,s}$ defined by (\ref{eqA10106}), i.e., $\gamma(t)=\gamma_{x,s}(t)$ for all $t\in[s,\nu_{x,s}]$. Moreover, the trajectory $(\gamma_{x,s}(t),t)$ first exits $N$ at time $\nu_{x,s}<T'$, and its exit point $(\gamma_{x,s}(\nu_{x,s}),\nu_{x,s})$ belongs to $\Gamma_1\subset\partial{D}\times(0,T')$. Consequently,
				\begin{equation*}
					u(x,s)=\frac{1}{2}\int_{s}^{\nu_{x,s}}[\dot{\gamma}_{x,s}(t)-b(\gamma_{x,s}(t),t)]^2\mathrm{d}t.
				\end{equation*}
				\item Equation (\ref{eqA10106}) is the characteristic differential equation for the PDEs (\ref{eqA10108}) and (\ref{eqA10109}). Applying the method of characteristics yields explicit integral representations:
				\begin{equation*}
					w(x,s)=\frac{1}{2}\int_{s}^{\nu_{x,s}}\frac{\partial^2{u}}{\partial{x}^2}(\gamma_{x,s}(t),t)\mathrm{d}t,
				\end{equation*}
				and for $m\geq 1$,
				\begin{equation*}
					\begin{aligned}
						\psi_m(x,s)=&\int_{s}^{\nu_{x,s}}\Bigg[\frac{1}{2}\Bigg(\bigg(\frac{\partial{w}}{\partial{x}}\bigg)^2-\frac{\partial^2{w}}{\partial{x}^2}\Bigg)\psi_{m-1}-\\
						&\frac{\partial{w}}{\partial{x}}\frac{\partial{\psi_{m-1}}}{\partial{x}}+\frac{1}{2}\frac{\partial^2{\psi_{m-1}}}{\partial{x}^2}\Bigg](\gamma_{x,s}(t),t)\mathrm{d}t.
					\end{aligned}
				\end{equation*}
			\end{itemize}
			 
			(b) In general, the asymptotic expansion in Theorem \ref{theoA102} may fail in regions where Assumption (A) is not satisfied. For a discussion of this phenomenon, see Remark 2.3 of [\onlinecite{Baldi_1995}].
			
			(c) Extensions of Theorem \ref{theoA102} to situations where the drift $b$ depends on $\varepsilon$ or the diffusion coefficient is non-constant can be found in [\onlinecite{Fleming_1992}].
	\end{remark}
	
	\section{Relationship with the Conditional Exit Time Probability of the Corresponding Ornstein-Uhlenbeck Process}\label{SecA2}
	Let $\{\tilde{x}^{\varepsilon}_t\}_{t\geq{0}}$ be a one-dimensional Ornstein-Uhlenbeck process governed by the stochastic differential equation (SDE):
	\begin{equation}
		\begin{aligned}
			&\mathrm{d}\tilde{x}^{\varepsilon}_t=\left(a_0+a_1\tilde{x}^{\varepsilon}_t\right)\mathrm{d}t+\sqrt{\varepsilon}\mathrm{d}w_t,\quad t>0,\\
			&\tilde{x}^{\varepsilon}_0=x_0.
		\end{aligned}
	\end{equation}
	Clearly, the quantity $-a_0/a_1$ represents the fixed point of the vector field $\tilde{b}(x)=a_0+a_1x$. It is well known that $\{\tilde{x}^{\varepsilon}_t\}_{t\geq{0}}$ is a Gauss-Markov process with mean $\mathbb{E}^{\varepsilon}(\tilde{x}^{\varepsilon}_t)=\left(x_0+\frac{a_0}{a_1}\right)e^{a_1t}-\frac{a_0}{a_1}$ and covariance $\mathbb{C}\text{ov}\left(\tilde{x}^{\varepsilon}_s,\tilde{x}^{\varepsilon}_t\right)=\frac{\varepsilon{e}^{a_1t}\sinh(a_1s)}{a_1}$ for $0\leq{s}\leq{t}$.
	
	An Ornstein-Uhlenbeck bridge $x^{\varepsilon}_t$ with prescribed terminal condition $x^{\varepsilon}_T=x_T$ (associated with the OU process $\tilde{x}^{\varepsilon}_t$) is a stochastic process defined by
	\begin{equation*}
		\text{Law}(x^{\varepsilon},\mathbb{P}^{\varepsilon})=\text{Law}(\tilde{x}^{\varepsilon},\tilde{\mathbb{P}}^{\varepsilon}_{x_0,0;x_T,T}),
	\end{equation*}
	where
	\begin{equation*}
		\tilde{\mathbb{P}}^{\varepsilon}_{x_0,0;x_T,T}\triangleq\mathbb{P}^{\varepsilon}_{x_0,0}\left(\cdot\vert\tilde{x}^{\varepsilon}_T=x_T\right)=\mathbb{P}^{\varepsilon}\left(\cdot\vert\tilde{x}^{\varepsilon}_0=x_0,\tilde{x}^{\varepsilon}_T=x_T\right).
	\end{equation*}
	As is well known (see, e.g., [\onlinecite{Barczy_2013,Azze_2024,Azze_2025}]), the process $x^{\varepsilon}_t$ coincides with the unique strong solution of the SDE (\ref{eq010101}) endowed with the drift term (\ref{eq010102}) and the initial condition $x^{\varepsilon}_0=x_0$.
	
	Let $\tilde{\tau}^{\varepsilon}_{x_0,0}\triangleq\inf\{t>0:\tilde{x}^{\varepsilon}_t\notin{D}\}$ denote the first exit time of $\tilde{x}^{\varepsilon}$ from $D$, and define the conditional exit time probability by
	\begin{equation*}
		\tilde{q}^{\varepsilon}(x_0,0)\triangleq\mathbb{P}^{\varepsilon}_{x_0,0}\left(\tilde{\tau}^{\varepsilon}_{x_0,0}\leq{T}\vert\tilde{x}^{\varepsilon}_T=x_T\right).
	\end{equation*}
	Then, in view of the relationship between $\tilde{x}^{\varepsilon}$ and $x^{\varepsilon}$, we obtain the identity
	\begin{equation*}
		\tilde{q}^{\varepsilon}(x_0,0)=q^{\varepsilon}(x_0,0),\quad\forall\;x_0\in{D}.
	\end{equation*}
	Therefore, the accurate asymptotic expansion of $\tilde{q}^{\varepsilon}$ in powers of $\varepsilon$ follows directly from Proposition \ref{theo0205}, \ref{theo030103}, and \ref{theo030110}.

	\nocite{*}
	\bibliography{Rerferences}
		
	\end{document}